\documentclass[11pt]{article}




\usepackage{amsmath}
\usepackage{amsfonts}
\usepackage{amssymb}
\usepackage{amsthm}
\usepackage{amsopn}

\usepackage[margin=1in]{geometry}

\usepackage{hyperref}

\usepackage{caption}
\usepackage{subcaption}
\usepackage{cleveref}

\usepackage{graphicx}
\usepackage{xcolor,colortbl}

\newcommand{\graycell}{\cellcolor{gray!25}}
\usepackage{multirow}

\usepackage{algorithm}
\usepackage{algorithmic}

\usepackage{enumitem}
\usepackage{diagbox}
\setlist[enumerate]{leftmargin=.5in}
\setlist[itemize]{leftmargin=.5in}


\newcommand{\email}[1]{\protect\href{mailto:#1}{#1}}

\usepackage[utf8]{inputenc}
\usepackage[T1]{fontenc}
\usepackage[english]{babel}

\newtheorem{theorem}{Theorem}[section]
\newtheorem{remark}{Remark}[section]
\newtheorem{proposition}[theorem]{Proposition}
\newtheorem{corollary}[theorem]{Corollary}

\numberwithin{equation}{section}

\title{Multilevel Surrogate-based Control Variates%
\footnote{This work was funded by the PIA framework
(CGI, ANR) and the industrial members of the IRT
Saint Exupéry project R-Evol: Airbus, Liebherr,
Altran Technologies, Capgemini DEMS France,
CENAERO and Cerfacs.}%
}%

\author{Reda El Amri\footnote{IFP \'{E}nergies Nouvelles, Solaize, France (\email{mohamed-reda.el-amri@ifpen.fr}).}
\and Paul Mycek\footnote{CECI, CNRS - Cerfacs, Toulouse, France (\email{mycek@cerfacs.fr}, \email{ricci@cerfacs.fr}).} \footnotemark[4]
\and Sophie Ricci\footnotemark[3] \footnotemark[4]
\and Matthias De Lozzo\footnote{IRT Saint Exupéry, Toulouse, France (\email{matthias.delozzo@irt-saintexupery.com}).}}

\date{Technical report}

\usepackage[figuresright]{rotating}
\usepackage{booktabs}

\usepackage{multirow}

\usepackage[detect-weight=true]{siunitx}

\DeclareMathOperator{\Diag}{Diag}

\DeclareMathOperator*{\argmax}{arg\,max}

\DeclareMathOperator{\cost}{cost}

\newcommand{\E}{\mathbb{E}}
\newcommand{\V}{\mathbb{V}}
\newcommand{\C}{\mathbb{C}}
\newcommand{\M}{\mathbb{M}}
\newcommand{\R}{\mathbb{R}}

\renewcommand{\vec}[1]{\mathbf{#1}}
\newcommand{\mat}[1]{\mathbf{#1}}
\newcommand{\vx}{\vec{x}}
\newcommand{\vX}{\vec{X}}
\newcommand{\vZ}{\vec{Z}}

\newcommand{\bpercent}{\boldsymbol{\percent}}



\newboolean{hidedeletions}   
\setboolean{hidedeletions}{true}

\newcommand{\mdl}[1]{{\color{blue}#1}}
\newcommand{\mdlr}[1]{\ifthenelse{\boolean{hidedeletions}}{\vphantom{#1}}{\mdl{\st{#1}}}}

\hypersetup{
  pdftitle={Multilevel Surrogate-based Control Variates},
  pdfauthor={R. El Amri, P. Mycek, S. Ricci and M. De Lozzo}
}


\begin{document}

\maketitle

\begin{abstract}
Estimating statistical parameters of the output of a numerical simulator with random inputs by crude Monte Carlo sampling can be inaccurate when simulations are computationally expensive, so variance reduction techniques must be considered, such as the method of control variates (CVs) or the multilevel Monte Carlo (MLMC) method.
In this paper, we propose three multifidelity variance reduction strategies relying on surrogate-based CVs, possibly combined with MLMC. 
MLCV extends the CV method by using multiple auxiliary random variables devised from surrogate models for simulators of different levels. 
MLMC-CV improves the MLMC estimator by using a CVs based on a surrogate of the correction term at each level. 
Further variance reduction is achieved by using the surrogate-based CVs of all the levels in the MLMC-MLCV strategy.
Alternative solutions that reduce the subset of surrogates used for the multilevel estimation are also introduced.
These strategies are validated on an uncertain 1D heat equation test case from the MLMC literature, where the statistic is the mean integrated temperature at a given time.
The results are assessed in terms of the accuracy and computational cost of the multilevel estimators. 
It is shown that when the lower fidelity outputs are strongly correlated with the high-fidelity outputs, a significant variance reduction is obtained when using surrogate models for the coarser levels only. 
Taking advantage of pre-existing surrogate models proves to be an even more efficient strategy.\\

\noindent\textbf{Key words:} Multifidelity, multilevel Monte Carlo, control variates, surrogate models, polynomial chaos, variance reduction.
\end{abstract}


\section{Introduction}

In recent years, the propagation of uncertainties in numerical simulators has become an essential step in the study of physical phenomena. Therefore, uncertainty quantification (UQ) has emerged as an important element in scientific computing \cite{Smith2013_UQ, sullivan2015_UQ}. For complex nonlinear systems, the task of quantifying the effect of uncertainties on the simulator behaviour can pose major challenges as closed-form solutions often do not exist. 
Sampling-based algorithms are considered the default approach when it comes to UQ for complex nonlinear simulators. 
Monte Carlo (MC) sampling is arguably the most popular and flexible method.
It consists of extracting statistical information from a sample of simulator responses. 
Due to its non-intrusive nature, MC is straightforward to implement and provides unbiased estimators for classical statistical parameters, e.g., the expectation and the variance. 
Furthermore, the convergence of MC estimators in terms of their root-mean-square error (RMSE) as a function of the sample size $n$ is independent of the stochastic input dimension, but, on the other hand, it is very slow, namely at a rate of $n^{-0.5}$.
For computationally expensive simulators, the cost of MC is often considered impractically high. In some cases, slight improvements can be obtained through the use of importance sampling \cite{Hesterberg1995_IS}, latin hypercube sampling \cite{Mckay1979_LHS} or quasi-Monte Carlo (QMC) \cite{Dick2013_qMC}.

Another well-known approach in UQ consists in replacing the simulator by a surrogate model. Fast-to-evaluate surrogates can be used to approximate the high-fidelity model response and therefore reduce drastically the computational cost of estimating statistics. Polynomial chaos (PC) \cite{Luthen2021_SPCsurvey, Najm2009_UQPCcdf}, Gaussian process models \cite{Rasmussen2004_GPML}, radial basis functions \cite{Park1991_RBF} and (deep) neural networks \cite{Tripathy2018_DNN} are commonly used surrogate models. The downside of the surrogate-based approach is that it introduces approximation error, which causes biases in the statistics estimators. Besides, this approximation error tends to increase in high dimension.

One recent MC sampling framework based on control variates (CV) \cite{Lavenberg1977_ApplicationControlVariables, Lavenberg1981_PerspectiveUseControl, Lavenberg1982_StatisticalResultsControl, Lemieux2017_ControlVariates} has been extensively developed and used. In a sampling-based CV strategy, one seeks to reduce the variance of the MC estimator of a random variable, arising from the high-fidelity model, by exploiting its correlation with an auxiliary random variable that arises from low-fidelity models approximating the same input-output relationship. In classic CV theory, the mean of the auxiliary random variable is assumed to be known. Unfortunately, in many cases such an assumption is not valid. This creates the need to use another estimator for the auxiliary random variable \cite{Pasupathy2012_CVEM, Ng2014_MultifidelityApproachesOptimization, 
Peherstorfer2016_OptimalModelManagement,  Peherstorfer2018_SurveyMultifidelityMethods, 
Gorodetsky2020_GeneralizedApproximateControl, Schaden2020_MultilevelBestLinear, Schaden2021_AsymptoticAnalysisMultilevel, Schaden2021_thesis}, which involves an additional computational cost. Recently, the adoption of surrogate models as such auxiliary random variables, in particular PC models, has been explored in \cite{Fox2021_PCCV, Yang2022_ControlVariatePolynomial, Garg_PCCV, Zixi_PCCV}. The benefits are twofold: (1) the prediction of the surrogate models may be highly correlated with the output high-fidelity model, leading to a reduced variance of the CV estimator as compared to the MC estimation; (2) certain surrogate models provide exact statistics that are needed by the CV approach and for the others, these statistics can be approximated at negligible cost with arbitrary precision. 

Another variance reduction technique, called the multilevel MC (MLMC) method~\cite{Giles2008_MultilevelMonteCarlo, Cliffe2011_MultilevelMonteCarlo, Giles2015_MultilevelMonteCarloa}, uses a hierarchy of models. Originally devised for the estimation of expected values, MLMC has since been extended to the estimation of other statistics, see, e.g., \cite{Bierig2015_ConvergenceAnalysisMultilevel, Bierig2016_EstimationArbitraryOrder} for the estimation of variance and higher order central moments and \cite{Mycek2019} for the computation of Sobol' indices. Typically, multilevel methods are based on a sequence of levels which correspond to a hierarchy of simulators with increasing accuracy and cost. From a practical standpoint, the different levels often correspond to simulators with increasing mesh resolutions. This translates into a lower accuracy of the so-called \textit{coarse} levels, whereas the \textit{finer} levels correspond to accurate simulators. By construction, MLMC results in an unbiased estimator. It relies on a telescopic sum of terms based on the differences between the successive simulators. It debiases the MC estimator associated with the lowest-level simulator. Many other unbiased multilevel estimators have been devised in recent years. The multi-index MC estimator \cite{Haji-Ali2016_MultiindexMonteCarlo} is an extension of the MLMC estimator such that the telescoping sum idea is used in multiple directions. In \cite{Giles2009_MultilevelQuasiMonteCarlo}, the authors developed a variant of MLMC which uses QMC samples instead of independent MC samples for each level.

In this work, we propose new estimation techniques to quantify efficiently the output uncertainties of simulators with limited computational budgets. 
These approaches build on existing statistical techniques to combine the best of them: MC sampling to guarantee the unbiasedness of the estimators and the independence of their accuracy with respect to the input dimension, multilevel estimation to reduce their variance by exploiting some hierarchical fidelity structure, and surrogate-based CVs to futher reduce the variance, especially on the coarser levels, as efficiently as possible. 
It is worth noting that we do not seek to estimate statistics from accurate surrogate models, but rather from sampling techniques assisted by multifidelity simulators and coarse-to-fine surrogate models.
A first strategy, named \emph{multilevel control variates} (MLCV), combines multiple CVs based on surrogate models of simulators with different fidelity levels.
Although this strategy does not build on the MLMC approach specifically, it still exploits multilevel information through the surrogates constructed at different levels.
The second strategy, named \emph{multilevel Monte Carlo with control variates} (MLMC-MLCV), utilizes CVs in an information fusion framework to exploit synergies between the flexible MLMC sampling and the correlation shared between the high- and low-fidelity components. The numerical results show that the unbiased MLMC-MLCV estimators can converge faster than the existing estimators.

This technical report is organized as follows. 
We introduce notations and the necessary mathematical background in \cref{Background}. In \cref{multilevel}, we briefly discuss the MLMC estimator and then define our proposed MLMC-MLCV estimator that combines multilevel sampling and surrogate-based CV.  In \cref{sec:expe}, we conduct numerical experiments to support the theoretical results. \Cref{conclusion} proposes concluding remarks.


\section{Background}\label{Background}
In this section, we summarize the important results of statistical estimation using control variates and we show how surrogate models can be leveraged in this setting.

\subsection{Notation}

We first introduce a few notations. Throughout the rest of this paper, the high-fidelity numerical model is abstractly represented by the deterministic mapping
\begin{equation}
\begin{split}
  f: \Xi &\longrightarrow \mathbb{R} \\
  \vx &\longmapsto f(\vx),
\end{split}
\end{equation}
where $\vx := \begin{pmatrix} x_1 & \cdots & x_d\end{pmatrix}^\intercal$ is a vector of $d$ uncertain input parameters evolving in a measurable space denoted by $\Xi = \Xi_1 \times \cdots \times \Xi_d$, with $\Xi_i \subset \mathbb{R}$. 
Following a probabilistic approach, the $d$-dimensional input is modelled as a random vector 
${\vX \colon \Omega \to \R^d}$ with known \emph{joint} probability distribution.
In this work, we seek an accurate estimator of some statistic $\theta$ of the output random variable $Y:=f(\mathbf{\vX}) \colon \Omega \to \mathbb{R}$, e.g., its expected value ($\theta = \E[Y]$) or variance ($\theta = \V[Y]$), at a reasonable computational cost $\mathcal{C}$.

\subsection{Crude Monte Carlo}

In practice, the MC estimator $\hat{\theta}$ of the statistic $\theta$ is based on $n$ observations $Y^{(1)},\dots,Y^{(n)}$ defined as $Y^{(i)}=f(\vX^{(i)})$ where $\mathcal{X} = \{\vX^{(1)},\ldots,\vX^{(n)}\}$ is a $n$-sample of $\vX$, that is, a collection of independent and identically distributed random variables with the same distribution as $\vX$. For instance, the \emph{sample mean} and (unbiased) \emph{sample variance} estimators, 
\begin{equation}
    \hat{E}[Y] =  n^{-1} \sum_{i=1}^n Y^{(i)} 
    \quad \text{and} \quad 
    \hat{V}[Y] = (n-1)^{-1} \sum_{i=1}^n (Y^{(i)} - \hat{E}[Y])^2
\end{equation}
are MC estimators of the expectation and variance of $Y$, respectively.
It is well known that these estimators are \textit{unbiased}, that is, $\E[\hat{\theta}] = \theta$, so that the mean square error (MSE) of $\hat{\theta}$ reduces to the variance of the estimator:
\begin{equation}
\operatorname{MSE}(\hat{\theta},\theta) 
:= 
\E[(\hat{\theta} - \theta)^2] 
= 
\V[\hat{\theta}] + (\E[\hat{\theta}] - \theta)^2 = \V[\hat{\theta}].
\end{equation}
The convergence of these MC estimators is known to be slow, so that variance reduction techniques are needed when dealing with computationally expensive simulators.

\subsection{Control variates}\label{sec:CV}

In this section, we present a well-known variance reduction technique using auxiliary random variables $Z_1,\dots,Z_M$ as control variates. 
We denote by $\tau_1,\dots,\tau_M$ 
statistical parameters
related to the control variates $Z_1,\dots,Z_M$,
and assume they are known exactly. 
Then, the CV estimator is defined as
\begin{equation}\label{MCV}
    \hat{\theta}^{\textnormal{CV}}(\boldsymbol{\alpha}) = \hat{\theta} - \boldsymbol{\alpha^\intercal} (\hat{\boldsymbol{\tau}} - \boldsymbol{\tau}),
\end{equation}
where $\hat{\theta}$ and $\hat{\boldsymbol{\tau}} = \begin{pmatrix} \hat{\tau}_1 & \cdots & \hat{\tau}_M \end{pmatrix}^\intercal$ are \emph{unbiased} MC estimators of $\theta$ and $\boldsymbol{\tau} =\begin{pmatrix} \tau_1 & \cdots & \tau_M\end{pmatrix}^\intercal$, respectively, based on a \emph{common} input $n$-sample $\mathcal{X}$, and where $\boldsymbol{\alpha} \in \mathbb{R}^M$ is the control parameter. 
We note that the CV estimator is unbiased by construction, regardless of the value of the parameter $\boldsymbol{\alpha}$, and that its variance reads
\begin{equation}\label{varMCV}
    \V[\hat{\theta}^{\textnormal{CV}}(\boldsymbol{\alpha})] = \V[\hat{\theta}] + \boldsymbol{\alpha^\intercal} \boldsymbol{\Sigma} \boldsymbol{\alpha} - 2 \boldsymbol{\alpha^\intercal} \mathbf{c}.
\end{equation}
with $\mathbf{c}:=\C[\hat{\boldsymbol{\tau}},\hat{\theta}] \in \mathbb{R}^{M}$ and $\boldsymbol{\Sigma}:=\C[\hat{\boldsymbol{\tau}}] \in \mathbb{R}^{M \times M}$.
To fully take advantage of the control variates, the parameter $\boldsymbol{\alpha}$ is selected so as to minimize the variance (\ref{varMCV}) of the CV estimator. Assuming that the covariance matrix $\boldsymbol{\Sigma}$ is non-singular (and thus symmetric positive definite, SPD), the first- and second-order optimality conditions of the minimization problem imply that there exists a unique optimal solution,
\begin{equation}\label{eq:CV_alpha_star}
    \boldsymbol{\alpha}^* = \boldsymbol{\Sigma}^{-1} \mathbf{c}.
\end{equation}
The optimal CV estimator is thus $\hat{\theta}^{\textnormal{CV}}(\boldsymbol{\alpha^*})$, and its variance is given by
\begin{equation}\label{VRMCV}
    \V[\hat{\theta}^{\textnormal{CV}}(\boldsymbol{\alpha}^*)]  = (1 - R^2) \V[\hat{\theta}],
\end{equation}
where
$R^2 
= \V[\hat{\theta}]^{-1} \mathbf{c}^\intercal
\boldsymbol{\Sigma}^{-1} \mathbf{c}$
is nonnegative, as $\boldsymbol{\Sigma}^{-1}$ is SPD.
Denoting by $\mathbf{D} = \Diag(\boldsymbol{\Sigma}) \in \mathbb{R}^{M \times M}$ the diagonal matrix consisting of the diagonal of $\boldsymbol{\Sigma}$, it can be shown further that 
$R^2 
=
\mathbf{r}^\intercal
\mathbf{R}^{-1}
\mathbf{r}$,
where 
$\mathbf{r}
=
(\V[\hat{\theta}]\mathbf{D})^{-1/2} \mathbf{c}$ 
is the vector of the Pearson correlation coefficients between $\hat{\theta}$ and  $\hat{\boldsymbol{\tau}}$, and 
$\mathbf{R}
=
\mathbf{D}^{-1/2} \boldsymbol{\Sigma} \mathbf{D}^{-1/2}
$ 
is the correlation matrix of $\hat{\boldsymbol{\tau}}$.
Thus, $R^2 \in [0,1]$ corresponds to the squared coefficient of multiple correlation between $\hat{\theta}$ and the elements of $\hat{\boldsymbol{\tau}}$~\cite[section~2.5.2]{Anderson2003_IntroductionMultivariateStatistical}.
Consequently, $0 \leq \V[\hat{\theta}^{\textnormal{CV}}(\boldsymbol{\alpha^*})] \leq \V[\hat{\theta}]$, and we refer to 
$R^2$
as the variance reduction factor of the CV estimator.
This shows that the variance of the optimal CV estimator $\hat{\theta}^{\textnormal{CV}}(\boldsymbol{\alpha^*})$ is always reduced (or, rigorously speaking, not increased) compared to the MC estimator $\hat{\theta}$.
Furthermore, the higher $R^2$, the greater the reduction in variance.

\begin{remark}
    The requirement that $\boldsymbol{\Sigma}$ be non-singular is a reasonable one.
    Indeed, let us suppose that $\boldsymbol{\Sigma}$ is singular.
    Then, because $\boldsymbol{\Sigma}$ is positive semi-definite by construction, this implies that there exists a nonzero vector $\boldsymbol{\eta} \in \R^{M} \setminus \{\vec{0}\}$ such that $\boldsymbol{\eta}^\intercal \boldsymbol{\Sigma} \boldsymbol{\eta} = \V[\boldsymbol{\eta}^\intercal \hat{\boldsymbol{\tau}}] = 0$, indicating that any one element of $\hat{\boldsymbol{\tau}}$ can be expressed as an affine function of the others.
    As such, it does not bring any additional information to the CV estimator, so that at least one of the control variates can simply be discarded.
\end{remark}

A desirable property of the CV estimator is that increasing the number of control variates improves the CV estimator. Specifically, under mild assumptions, \cref{prop:add_cv} states that the variance of the CV estimator is reduced (or, rigorously speaking, not increased) when adding a new control variate.

\begin{proposition}\label{prop:add_cv}
Let 
$\hat{\boldsymbol{\tau}}_{+}
:= 
\begin{bmatrix}
    \hat{\boldsymbol{\tau}}^\intercal & \hat{\tau}_{M+1} 
\end{bmatrix}^\intercal
$,
$\boldsymbol{\tau}_{+}
:= 
\begin{bmatrix}
    \boldsymbol{\tau}^\intercal & \tau_{M+1} 
\end{bmatrix}^\intercal
$,
and define
$\mathbf{c}_+ = \C[\hat{\boldsymbol{\tau}}_{+}, \hat{\theta}]$, and $\boldsymbol{\Sigma}_{+} = \C[\hat{\boldsymbol{\tau}}_{+}]$.
We further assume that $\boldsymbol{\Sigma}$ and $\boldsymbol{\Sigma}_{+}$ are non-singular and that $\V[\hat{\tau}_{M+1}] > 0$.
Let 
$\hat{\theta}^{\textnormal{CV}}(\boldsymbol{\alpha^*_{+}})
:=
\hat{\theta} -
{\boldsymbol{\alpha}^*_+}^\intercal
(\hat{\boldsymbol{\tau}}_+ - \boldsymbol{\tau}_+)$,
with $\boldsymbol{\alpha}^*_+ = \boldsymbol{\Sigma}_+^{-1} \mathbf{c}_+$,
be the optimal CV estimator based on $M+1$ control variates, and let
$R^2_+$
denote its variance reduction factor.
Then 
$R^2_+ \geq R^2$.
\end{proposition}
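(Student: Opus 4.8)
The plan is to reduce the inequality $R^2_+ \geq R^2$ to a purely algebraic comparison of the two quadratic forms defining these factors. Since \cref{VRMCV} and the line below it give $R^2 = \V[\hat{\theta}]^{-1}\,\mathbf{c}^\intercal \boldsymbol{\Sigma}^{-1}\mathbf{c}$ and likewise $R^2_+ = \V[\hat{\theta}]^{-1}\,\mathbf{c}_+^\intercal \boldsymbol{\Sigma}_+^{-1}\mathbf{c}_+$, and both share the same positive prefactor $\V[\hat{\theta}]^{-1}$, it suffices to prove that $\mathbf{c}_+^\intercal \boldsymbol{\Sigma}_+^{-1}\mathbf{c}_+ \geq \mathbf{c}^\intercal \boldsymbol{\Sigma}^{-1}\mathbf{c}$. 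The natural tool is the block structure of the augmented covariance matrix together with the Schur-complement expression for its inverse.

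First I would make the block structure explicit. Writing $\mathbf{b} := \C[\hat{\boldsymbol{\tau}}, \hat{\tau}_{M+1}]$ and $c_{M+1} := \C[\hat{\tau}_{M+1}, \hat{\theta}]$, the definitions of $\boldsymbol{\Sigma}_+$ and $\mathbf{c}_+$ read
\begin{equation}
\boldsymbol{\Sigma}_+ = \begin{pmatrix} \boldsymbol{\Sigma} & \mathbf{b} \\ \mathbf{b}^\intercal & \V[\hat{\tau}_{M+1}] \end{pmatrix}, \qquad \mathbf{c}_+ = \begin{pmatrix} \mathbf{c} \\ c_{M+1} \end{pmatrix}.
\end{equation}
Introduce the Schur complement $s := \V[\hat{\tau}_{M+1}] - \mathbf{b}^\intercal \boldsymbol{\Sigma}^{-1}\mathbf{b}$. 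This is the one place where the hypotheses are genuinely used: $\boldsymbol{\Sigma}_+$ is positive semi-definite by construction and non-singular by assumption, hence SPD, and $\boldsymbol{\Sigma}$ is SPD, so the standard determinant factorization $\det \boldsymbol{\Sigma}_+ = s \, \det \boldsymbol{\Sigma}$ forces $s > 0$.

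Next I would insert the block-inverse formula for $\boldsymbol{\Sigma}_+^{-1}$ (in terms of $\boldsymbol{\Sigma}^{-1}$, $\mathbf{b}$, and $s^{-1}$) into the quadratic form $\mathbf{c}_+^\intercal \boldsymbol{\Sigma}_+^{-1}\mathbf{c}_+$ and collect terms. Setting the scalar $w := \mathbf{b}^\intercal \boldsymbol{\Sigma}^{-1}\mathbf{c}$, the diagonal blocks contribute $\mathbf{c}^\intercal \boldsymbol{\Sigma}^{-1}\mathbf{c} + s^{-1} w^2$ and $s^{-1} c_{M+1}^2$, while the two off-diagonal blocks contribute $-2 s^{-1} c_{M+1} w$; using the symmetry of $\boldsymbol{\Sigma}^{-1}$ these combine into a perfect square,
\begin{equation}
\mathbf{c}_+^\intercal \boldsymbol{\Sigma}_+^{-1}\mathbf{c}_+ = \mathbf{c}^\intercal \boldsymbol{\Sigma}^{-1}\mathbf{c} + \frac{\bigl(c_{M+1} - \mathbf{b}^\intercal \boldsymbol{\Sigma}^{-1}\mathbf{c}\bigr)^2}{s}.
\end{equation}
Since $s > 0$, the appended term is nonnegative, which yields $\mathbf{c}_+^\intercal \boldsymbol{\Sigma}_+^{-1}\mathbf{c}_+ \geq \mathbf{c}^\intercal \boldsymbol{\Sigma}^{-1}\mathbf{c}$ and therefore $R^2_+ \geq R^2$.

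I do not expect a serious obstacle here: the argument is essentially routine linear algebra, and the only two points demanding care are (i) justifying $s > 0$ from the SPD/non-singularity hypotheses, and (ii) the bookkeeping in the block-inverse expansion needed to see the square emerge cleanly. As an interpretive sanity check I would note that equality holds precisely when $c_{M+1} = \mathbf{b}^\intercal \boldsymbol{\Sigma}^{-1}\mathbf{c}$, i.e.\ when the covariance of the new control variate with $\hat{\theta}$ is already fully accounted for by the existing control variates, in which case the $(M+1)$-th variate adds no information.
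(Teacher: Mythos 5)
Your proof is correct and follows essentially the same route as the paper's: block inversion of the augmented covariance matrix via the Schur complement, the identity $R^2_+ = R^2 + s^{-1}(\,\cdot\,)^2$, and positivity of the Schur complement from the SPD hypotheses. The only cosmetic differences are that the paper works with the correlation-normalized quantities $\mathbf{r}_+,\mathbf{R}_+$ rather than $\mathbf{c}_+,\boldsymbol{\Sigma}_+$, and establishes $s>0$ by evaluating the quadratic form of $\mathbf{R}_+$ at a specific vector instead of your (equally valid) determinant factorization $\det\boldsymbol{\Sigma}_+ = s\det\boldsymbol{\Sigma}$.
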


\begin{proof}
We have
$R^2_+ 
= 
\mathbf{r}_+^\intercal \mathbf{R}_+^{-1} \mathbf{r}_+
$, where
$\mathbf{r}_+ = \begin{bmatrix}
    \mathbf{r}^\intercal & \gamma
\end{bmatrix}^\intercal$ and
$\mathbf{R}_+ = \begin{bmatrix}
    \mathbf{R} & \mathbf{u} \\
    \mathbf{u}^\intercal & 1
\end{bmatrix}$,
and where
$\mathbf{u} = (\V[\hat{\tau}_{M+1}] \mathbf{D})^{-1/2} \C[\hat{\boldsymbol{\tau}}, \hat{\tau}_{M+1}]$ and $\gamma = (\V[\hat{\theta}]\V[\hat{\tau}_{M+1}])^{-1/2}\C[\hat{\theta}, \hat{\tau}_{M+1}]$.
Note that $\mathbf{R}$ and $\mathbf{R}_+$ are both non-singular, because so are $\boldsymbol{\Sigma}$ and $\boldsymbol{\Sigma}_{+}$.
The augmented matrix $\mathbf{R}_+$ may be inverted by block using the Schur complement $s = 1 - \mathbf{u}^\intercal \mathbf{R}^{-1} \mathbf{u}\ne 0$ of the (1,1)-block $\mathbf{R}$.
It follows that $R^2_+ = R^2 + s^{-1}(\gamma - \mathbf{u}^\intercal \mathbf{R}^{-1} \mathbf{r})^2$.
Now, because $\mathbf{R}_+$ is SPD (it is positive semidefinite by construction and non-singular by assumption), $\vx^\intercal \mathbf{R}_+ \vx > 0$ for any choice of $\vx \ne \boldsymbol{0}$.
The particular choice of $\vx = \begin{bmatrix}
    - \mathbf{u}^\intercal \mathbf{R}^{-1} & 1
\end{bmatrix}^\intercal$ implies that $s > 0$, which in turn implies that $R^2_+ \geq R^2$.
\end{proof}

\begin{corollary}
Under the assumptions of \cref{prop:add_cv}, 
the equality
$R^2_+ = R^2$
holds if and only if $\gamma = \mathbf{u}^\intercal \mathbf{R}^{-1} \mathbf{r}$.
\end{corollary}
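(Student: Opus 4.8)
The plan is to extract the equality case directly from the identity already established in the proof of \cref{prop:add_cv}, so that essentially no new computation is required. That proof, via block inversion with the Schur complement, produced the explicit expression
\[
  R^2_+ = R^2 + s^{-1}\bigl(\gamma - \mathbf{u}^\intercal \mathbf{R}^{-1} \mathbf{r}\bigr)^2,
\]
where $s = 1 - \mathbf{u}^\intercal \mathbf{R}^{-1} \mathbf{u}$ is the Schur complement of the $(1,1)$-block $\mathbf{R}$ in $\mathbf{R}_+$. This single identity is the only ingredient I need.

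First I would emphasize that the proof of \cref{prop:add_cv} in fact established a stronger, quantitative statement than the inequality $R^2_+ \geq R^2$ alone, namely the exact difference $R^2_+ - R^2 = s^{-1}(\gamma - \mathbf{u}^\intercal \mathbf{R}^{-1} \mathbf{r})^2$, together with the strict positivity $s > 0$. The latter was obtained by invoking the SPD-ness of $\mathbf{R}_+$ and testing the quadratic form against the particular vector $\vx = \begin{bmatrix} -\mathbf{u}^\intercal \mathbf{R}^{-1} & 1 \end{bmatrix}^\intercal$; I would simply carry this fact forward.

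Then, since $s^{-1} > 0$, the difference $R^2_+ - R^2$ is a strictly positive multiple of the square $(\gamma - \mathbf{u}^\intercal \mathbf{R}^{-1} \mathbf{r})^2$. A nonnegative real quantity vanishes if and only if its square root vanishes, so $R^2_+ - R^2 = 0$ holds if and only if $\gamma - \mathbf{u}^\intercal \mathbf{R}^{-1} \mathbf{r} = 0$, which is precisely the claimed condition $\gamma = \mathbf{u}^\intercal \mathbf{R}^{-1} \mathbf{r}$.

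I do not anticipate any genuine obstacle: the corollary is a straightforward reading-off of the equality case from the formula derived in the proposition. The only point requiring minor care is to use the \emph{strict} positivity $s > 0$ rather than mere non-singularity $s \ne 0$, since it is the positivity of $s^{-1}$ that guarantees the right-hand side cannot vanish through a sign cancellation; but this is already secured by the argument in the proof of \cref{prop:add_cv}.
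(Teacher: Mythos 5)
Your proposal is correct and follows exactly the paper's own argument: the paper also deduces the corollary directly from the identity $R^2_+ = R^2 + s^{-1}(\gamma - \mathbf{u}^\intercal \mathbf{R}^{-1} \mathbf{r})^2$ together with $s > 0$ established in the proof of the proposition. Your additional remark that strict positivity of $s$ (rather than mere non-vanishing) is the operative fact is a fair point of emphasis, but it does not change the argument.
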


\begin{proof}
Straightforward from $R^2_+ = R^2 + s^{-1}(\gamma - \mathbf{u}^\intercal \mathbf{R}^{-1} \mathbf{r})^2$ with $s > 0$.
\end{proof}

For the CV estimation of specific statistics, the expressions of $\vec{c}$ and $\boldsymbol{\Sigma}$ may be further reduced to involve statistics on $Y$ and $\vZ := \begin{pmatrix}Z_1 & \cdots & Z_M\end{pmatrix}^\intercal$ directly. These expressions, as well as the consequences on \cref{eq:CV_alpha_star,VRMCV}, are given in \cref{app:CVmeanvar} for the CV estimators of $\E[Y]$ and $\V[Y]$.

It should be noted at this point that, in practice, the optimal parameter $\boldsymbol{\alpha}^*$ needs to be approximated.
Specifically, the statistics involved in $\vec{c}$ and $\boldsymbol{\Sigma}$ need to be estimated.
This can be done either using the same sample $\mathcal{X}$ as for the CV estimator itself, or using an independent, pilot sample $\mathcal{X}'$.
The former strategy introduces a bias in the resulting CV estimator, while the latter guarantees unbiasedness but requires additional high-fidelity simulator evaluations, so that the former strategy is generally preferred.
Furthermore, statistical remedies such as jackknifing, splitting or bootstrapping have been proposed to reduce or eliminate the bias introduced by the former strategy~\cite{Nelson1990_ControlVariateRemedies}.
In both strategies, however, neither the theoretical variance reduction given by \cref{VRMCV} nor \cref{prop:add_cv} hold anymore.

\begin{remark}
    In some instances, because it only involves the control variates, $\boldsymbol{\Sigma}$ may be known exactly (see \cref{rmk:Sigma_pce} for an example in a multilevel setting involving PC-based control variates) or estimated accurately using many independent samples at negligible cost. 
    Unfortunately, this is not the case for $\vec{c}$, which involves the output $Y=f(\vX)$ of the high-fidelity simulator $f$.
\end{remark}

\subsection{Surrogate-based control variates}\label{Surrogates}

The efficiency of the CV approach relies on the strong correlation between $Y=f(\vX)$ and the control variates $\vZ$. 
In a multifidelity framework, the control variates correspond to the output of low-fidelity versions of $f$, typically in the form of simulators with simplified physics and/or coarser discretization.
In many applications, the exact statistical measures $\boldsymbol{\tau}$ of such control variates $\mathbf{Z}$ may not be available.
One way to circumvent this limitation is to use additional samples to also estimate $\boldsymbol{\tau}$.
This type of estimators led to an \emph{approximate} CV class of methods~\cite{Gorodetsky2020_GeneralizedApproximateControl, Ng2014_MultifidelityApproachesOptimization, Peherstorfer2016_OptimalModelManagement}, where different model management and sample allocation strategies may be used to find a suitable trade-off between the additional cost of sampling $\vZ$ and the resulting variance reduction.
Recently, an optimal strategy was proposed~\cite{Schaden2020_MultilevelBestLinear, Schaden2021_AsymptoticAnalysisMultilevel, Schaden2021_thesis} leading to the so-called multilevel best linear unbiased estimator (MLBLUE), as well as a unifying framework for a large class of multilevel and multifidelity estimators.
However, ACV estimators typically yield lower variance reduction than their exact CV counterparts, as illustrated in \cref{app:cv_vs_acv} on bi-fidelity {(A)CV} estimators of the expectation.

In this paper, we focus on the case where the low-fidelity models correspond to surrogate models of the high-fidelity simulator $f$, which allows us to use the original CV approach described in \cref{sec:CV}.
Indeed, compared to the cost of evaluating the high-fidelity simulator $f$, the cost of evaluating surrogate models can be neglected, so that the statistical parameters $\boldsymbol{\tau}$ of the output of the surrogate models can be estimated arbitrarily accurately at negligible cost.
In fact, in some instances, the exact statistical parameters $\boldsymbol{\tau}$ may even be directly available, as detailed hereafter.
As a direct consequence, as discussed above, the question of sample allocation need not be addressed, and greater variance reduction is expected.
Using surrogate models in a CV strategy has been explored previously in \cite{Fox2021_PCCV, Yang2022_ControlVariatePolynomial}, where the available computational budget is allocated both to the construction of the surrogates and to the actual CV estimations.
In~\cite{Yang2022_ControlVariatePolynomial}, the authors introduce an approach that optimally balances the computational effort needed to select the optimal degree of the polynomial chaos (PC) expansion used in a stochastic Galerkin CV approach.
In our work, we focus on the situation in which the surrogate models are available, so that we do not consider any optimization strategy for the construction of the surrogates. 
Nevertheless, for the fairness of comparison, we still report the pre-processing cost of constructing the surrogate models.

We now briefly describe three different surrogate models commonly used in a UQ framework, and discuss the availability of statistics of their output.

\subsubsection{Gaussian process modelling}\label{sec:GP}
We assume that the simulator $f$ is a realization of a Gaussian process (GP) $F$ indexed by $\vx$ and defined by its mean function $m_F$ and covariance kernel $k_F$,
\begin{align}
\E[F(\vx)] &= m_F(\vx),\\
\C[F(\vx),F(\vx')] &= k_F(\vx,\vx'), \quad \forall \vx,\vx' \in \Xi.
\end{align} 
In practice, one can parametrize the forms of $m_F$ and $k_F$. 
For example, in the widely used ordinary GP method, a stationary GP is assumed. In this case, $m_F$ is set as a constant $m_F(\vx) = m$. 
More importantly, it is assumed that $k_F(\vx,\vx') = \bar{k}_F(\vx - \vx')$, and $k_F(\vx,\vx) = \bar{k}_F(0) = \sigma^2$ is a constant. 
Popular forms of kernels include polynomial, exponential, Gaussian, and Matérn functions. For example, the Gaussian kernel can be written as $k_F(\vx,\vx') = \sigma^2 \exp(-\frac{1}{2} \| \vx -\vx' \|^2_{\mathbf{h}})$, where the weighted norm is defined as $\| \vx -\vx' \|_{\mathbf{h}} = \left(\sum_{i=1}^{d} \frac{(x_i - x_i')^2}{h_i^2}\right)^{1/2}$ where $h_1,\ldots,h_d$ are correlation lengths. 
The hyperparameters $\sigma$ and $h_i$ can be obtained by maximum likelihood. Then, given $n$ observations $\mathcal{F} = \begin{pmatrix}f(\vx^{(1)}) & \cdots & f(\vx^{(n)}) \end{pmatrix}^\intercal$ 
of $F$ at
$\mathcal{X} = \begin{pmatrix} \vx^{(1)} & \cdots & \vx^{(n)} \end{pmatrix}^\intercal$, the posterior $\tilde{F}$ of $F$ can be defined as
\begin{equation}
\tilde{F} = [F \mid F(\mathcal{X}) = \mathcal{F}],
\end{equation}
whose expectation and covariance are given by
\begin{align}
m_{\tilde{F}}(\vx) & =\E[\tilde{F}(\vx)] = m_F(\vx) + k_F(\vx,\mathcal{X} )^\intercal k_F(\mathcal{X},\mathcal{X})^{-1}(\mathcal{F} - m_{F}(\mathcal{X})),\\
k_{\tilde{F}}(\vx,\vx') &= \C[\tilde{F}(\vx),\tilde{F}(\vx')] =  k_F(\vx,\vx')- k_F(\vx,\mathcal{X})^\intercal k_F(\mathcal{X},\mathcal{X})^{-1}k_F(\mathcal{X},\vx').
\end{align}
Thereafter, the high-fidelity model $f$ at $\vx$ will be approximated by the conditional expectation,
\begin{equation}
g^{\textnormal{GP}}(\vx) = m_{\tilde{F}}(\vx).
\end{equation}
Thus, the expectation and variance of $g^{\textnormal{GP}}(\vX)$ are defined as
\begin{align}
\E[g^{\textnormal{GP}}(\vX)] &= \int_{\Xi} m_{\tilde{F}}(\vx) p_{\vX}(\vx) \, \mathrm{d}\vx,\\ 
\V[g^{\textnormal{GP}}(\vX)] &= \int_\Xi (m_{\tilde{F}}(\vx) - \E[g^{\textnormal{GP}}(\vX)])^2 p_{\vX}(\vx) \, \mathrm{d}\vx.
\end{align}
These two statistics can be approximated empirically by taking a large sample of $\vX$, as the GP model is inexpensive. Analytical formulas exist for some pairs of distributions and covariance functions (see \cite[Table~1]{Briol2019_BayesQuad} and \cite{LeRiche2013_GPexpectation, elamri_OUU}).

\subsubsection{Taylor polynomials}\label{sec:taylor}
We assume that the numerical simulator $f$ is infinitely differentiable and that the moments of $\vX$ are finite. Under this assumption, it is possible to expand the original simulator $f$ around the input's expected value $\boldsymbol{\mu}_\vX := \E[\vX]$ as the infinite polynomial series according to Taylor's theorem,
\begin{equation}
    f(\vX) = g^{\textnormal{T}_\infty}(\vX) = \sum \limits_{|\boldsymbol{\beta}| \leq p} \frac{(\vX - \boldsymbol{\mu}_\vX)^{\boldsymbol{\beta}}}{\boldsymbol{\beta} !} D^{\boldsymbol{\beta}} f(\boldsymbol{\mu}_\vX),
\end{equation}
where $\boldsymbol{\beta} \in \mathbb{N}^d$, $|\boldsymbol{\beta}| = \sum_{i=1}^{d} \beta_i$, $\boldsymbol{\beta}! = \prod_{i=1}^{d} \beta_i !$, $x^{\boldsymbol{\beta}} = \prod_{i=1}^{d} x_i^{\beta_i}$ and $D^{\boldsymbol{\beta}} f =  \dfrac{\partial^{|\boldsymbol{\beta}|}f}{\partial^{\beta_1} x_1 \cdots \partial^{\beta_d} x_d}$. Thus, the function $f$ may be approximated by the first- and second-order Taylor polynomials,
\begin{align}\label{eq:Taylor1}
    &g^{\textnormal{T}_1}(\vX) = f(\boldsymbol{\mu}_\vX) + \mathbf{J}_f(\boldsymbol{\mu}_\vX) (\vX - \boldsymbol{\mu}_\vX), \\
    \label{eq:Taylor2}
    & g^{\textnormal{T}_2}(\vX) = f(\boldsymbol{\mu}_\vX) + \mathbf{J}_f(\boldsymbol{\mu}_\vX) (\vX - \boldsymbol{\mu}_\vX)  + \frac{1}{2} (\vX - \boldsymbol{\mu}_\vX)^\intercal \mathbf{H}_f(\boldsymbol{\mu}_\vX)(\vX - \boldsymbol{\mu}_\vX),
\end{align}
where $\mathbf{J}_f(\boldsymbol{\mu}_\vX) = \nabla_{\!f}(\boldsymbol{\mu}_\vX)^\intercal \in \mathbb{R}^{1 \times d}$ and $\mathbf{H}_f(\boldsymbol{\mu}_\vX) \in \mathbb{R}^{d\times d}$ are the Jacobian and Hessian matrices of $f$ at $\boldsymbol{\mu}_\vX$, respectively. In practical computations, the derivatives may be approximated by numerical differentiation if they are not provided. The expectation and variance of $g^{\textnormal{T}_1}(\vX)$ and $g^{\textnormal{T}_2}(\vX)$ are then defined as
\begin{align}
\E[g^{\textnormal{T}_1}(\vX)] &= f(\boldsymbol{\mu}_\vX),\\
\V[g^{\textnormal{T}_1}(\vX)] &= \mathbf{J}_f(\boldsymbol{\mu}_\vX)^{\odot 2} \boldsymbol{\sigma}^2_\vX,\\
\E[g^{\textnormal{T}_2}(\vX)] &= f(\boldsymbol{\mu}_\vX) + \frac{1}{2} \operatorname{Tr}(\mathbf{H}_f(\boldsymbol{\mu}_\vX)  \boldsymbol{\Sigma}^2_\vX),\\
\V[g^{\textnormal{T}_2}(\vX)] &= \operatorname{Tr}(\mathbf{J}_f(\boldsymbol{\mu}_\vX)^{\intercal} \mathbf{J}_f(\boldsymbol{\mu}_\vX) \boldsymbol{\Sigma}^2_\vX ) + \frac{1}{2}  \operatorname{Tr}( \mathbf{H}_f(\boldsymbol{\mu}_\vX)^{\odot 2} \boldsymbol{\sigma}^2_\vX (\boldsymbol{\sigma}^2_\vX)^{\intercal}),
\end{align}
where for any matrix $\mathbf{A}$,  $\mathbf{A}^{\odot 2}$ denotes the element-wise square of $\mathbf{A}$, $\operatorname{Tr}(\mathbf{A})$ is the trace of $\mathbf{A}$, $\boldsymbol{\sigma}^2_\vX = \begin{bmatrix}\sigma^2_{X_1} & \cdots & \sigma^2_{X_d}\end{bmatrix}^\intercal \in \mathbb{R}^d$ is the element-wise variance of $\vX$, and $\boldsymbol{\Sigma}^2_\vX = \Diag(\boldsymbol{\sigma}^2_\vX) \in \mathbb{R}^{d \times d}$.

\subsubsection{Polynomial chaos expansion}\label{sec:pce}

We consider the truncated polynomial chaos (PC) expansion of $f(\vX)$~\cite{Wiener1938_HomogeneousChaos, Cameron1947_OrthogonalDevelopmentNonlinear, Ghanem2003_StochasticFiniteElements, LeMaitre2010_SpectralMethodsUncertainty},
\begin{equation}
    f(\vX) \simeq g^{\textnormal{PC}_P}(\vX)
    = \sum_{k=0}^{P} \mathfrak{g}_k \Psi_k(\vX),
\end{equation}
where the coefficients $\mathfrak{g}_k$ are real scalars and $\Psi_k$ are orthonormal multivariate polynomials:
\begin{equation}
    \forall i,j \geq 0, \quad
    \langle \Psi_i, \Psi_j \rangle_{p_{\vX}}
    := \int_{\Xi} \Psi_i(\vx) \Psi_j(\vx) p_{\vX}(\vx) \, \mathrm{d}\vx
    = \E[\Psi_i(\vX)\Psi_j(\vX)]
    = \delta_{ij},
\end{equation}
where $\delta_{ij}$ denotes the Kronecker delta. The coefficients $\mathfrak{g}_k$ may be approximated using non-intrusive techniques such as non-intrusive (pseudo)spectral projection~\cite{Reagan2003_UncertaintyQuantificationReactingflow, Constantine2012_SparsePseudospectralApproximation, Conrad2013_AdaptiveSmolyakPseudospectral}, regression~\cite{Berveiller2006_StochasticFiniteElement, Sudret2008_GlobalSensitivityAnalysis, Blatman2010_EfficientComputationGlobal, Blatman2011_AdaptiveSparsePolynomial}, interpolation~\cite{Babuska2007_StochasticCollocationMethod, Nobile2008_AnisotropicSparseGrid}, or from intrusive approaches such as the stochastic Galerkin method~\cite{Ghanem2003_StochasticFiniteElements, LeMaitre2010_SpectralMethodsUncertainty}. Assuming $\Psi_0 \equiv 1$, the expectation and variance of $g^{\textnormal{PC}_P}(\vX)$ are then given by
\begin{equation}
    \E\left[g^{\textnormal{PC}_P}(\vX)\right] = \mathfrak{g}_0 \quad \mathrm{and}\quad
    \V\left[g^{\textnormal{PC}_P}(\vX)\right] = \sum_{k=1}^P \mathfrak{g}_k^2.
\end{equation}


\section{Multilevel estimators}\label{multilevel}
In this section, we present so-called \emph{multilevel} statistical estimation techniques based on a sequence of simulators $(f_\ell)_{\ell = 0}^L$, with increasing accuracy and computational cost, where $f_L\equiv f$ denotes the high-fidelity numerical simulator. 
The levels are ordered from the \emph{coarsest} ($\ell=0$) to the \emph{finest} ($\ell=L$).
We denote by $Y_\ell$ the random variable $Y_\ell=f_\ell(\vX)$ and $\theta_\ell$ the statistic of $f_\ell(\vX)$ increasingly close to $\theta_L\equiv \theta$. 

\subsection{Multilevel Monte Carlo}\label{sec:MLMC}

The statistic $\theta_L$ can be expressed as the telescoping sum
\begin{equation}\label{telescopingsum}
   \theta_L = \sum \limits_{\ell = 0}^{L} T_\ell,
\end{equation}
where $T_\ell = \theta_\ell - \theta_{\ell-1}$, and, by convention, $\theta_{-1} := 0$.
The MLMC estimator $\hat{\theta}_L^{\textnormal{MLMC}}$ of $\theta_L$ is then defined as~\cite{Giles2008_MultilevelMonteCarlo, Giles2015_MultilevelMonteCarloa, Teckentrup2013_MultilevelMonteCarlo}
\begin{equation}\label{MLMCestimator}
\hat{\theta}^{\textnormal{MLMC}} 
= 
\sum_{\ell = 0}^{L} \hat{T}_{\ell}^{(\ell)}, 
\end{equation}
where, at each level $\ell$, $\hat{T}_{\ell}^{(\ell)}$ is an unbiased MC estimator of $T_\ell$, based on an input sample $\mathcal{X}^{(\ell)} = \{\vX^{(\ell, i)}\}_{i=1}^{n_\ell}$ such that the members of $\mathcal{X}^{(\ell)}$ and $\mathcal{X}^{(\ell')}$ are mutually independent for $\ell \ne \ell'$.
In many instances, $\hat{T}_{\ell}^{(\ell)}$ is actually defined as $\hat{T}_{\ell}^{(\ell)} = \hat{\theta}_{\ell}^{(\ell)} - \hat{\theta}_{\ell-1}^{(\ell)}$, where $\hat{\theta}_{k}^{(\ell)}$ denotes the unbiased MC estimator of $\theta_{k}$ based on the simulator $f_k$ using the $n_{\ell}$-sample $\mathcal{X}^{(\ell)}$.

For example, the MLMC estimator $\hat{E}^{\textnormal{MLMC}}[Y]$ of the expectation $\E[Y]$ is given by
\begin{equation}
\hat{E}^{\textnormal{MLMC}}[Y] 
=
\hat{E}^{(0)} [Y_0]
+
\sum \limits_{\ell = 1}^{L} \hat{E}^{(\ell)} [Y_\ell]- \hat{E}^{(\ell)}[Y_{\ell-1}],
\end{equation}
where $\hat{E}^{(\ell)} [Y_{k}] = n_\ell^{-1} \sum_{i = 1}^{n_\ell} f_{k}(\vX^{(\ell,i)})$, with $k \in \{\ell, \ell-1\}$.
We stress that the correction terms at each level $\ell$ are computed from the same input sample $\mathcal{X}^{(\ell)}$, but using two successive simulators, $f_\ell$ and $f_{\ell-1}$. 
Similarly, the MLMC estimator $\hat{V}^{\textnormal{MLMC}}[Y]$ of the variance $\V[Y]$ is defined as~\cite{Bierig2015_ConvergenceAnalysisMultilevel}
\begin{equation}
\hat{V}^{\textnormal{MLMC}}[Y] 
=
\hat{V}^{(0)} [Y_0]
+
\sum \limits_{\ell = 1}^{L} \hat{V}^{(\ell)} [Y_\ell] - \hat{V}^{(\ell)}[Y_{\ell-1}],
\end{equation}
where $\hat{V}^{(\ell)} [Y_{k}] = \frac{n_\ell}{n_\ell - 1} ( \hat{E}^{(\ell)} [Y_{k}^2] - \hat{E}^{(\ell)} [Y_{k}]^2)$ is the single-level unbiased MC variance estimator.
Owing to the independence of the estimators $\hat{T}_{\ell}^{(\ell)}$, the variance of the MLMC estimator is
\begin{equation}\label{VarMLMC}
    \V[\hat{\theta}^{\textnormal{MLMC}}]
    = 
    \sum\limits_{\ell = 0}^{L} \V[\hat{T}_{\ell}^{(\ell)}].
\end{equation}

In practice, the MLMC method relies on the allocation of the total (expected) computational cost of the MLMC estimator across the different levels,
with 
\begin{equation}\label{eq:cost_MLMC}
\cost(\hat{\theta}^{\textnormal{MLMC}})
=
\sum_{\ell=0}^{L} n_\ell (\mathcal{C}_\ell+\mathcal{C}_{\ell-1}),
\end{equation}
where $\mathcal{C}_\ell$ is the (expected) cost of one evaluation of the simulator $f_{\ell}$, with $\mathcal{C}_{-1} := 0$ by convention. Thus, a key aspect is played by the choice of the number of samples $n_\ell$ allocated to each level $\ell$. The goal is to find the sample sizes $n_0,\ldots,n_L$ that minimize the variance of the estimator $\V[\hat{\theta}^{\textnormal{MLMC}}]$ given a computational budget $\mathcal{C}$. Thus, the sample allocation problem reads
\begin{equation}\label{eq:min_sample_alloc}
\begin{aligned}
& \operatorname*{minimize}\limits_{n_0,\ldots,n_L\in\mathbb{N}^*} && \V[\hat{\theta}^{\textnormal{MLMC}}] \\[.5em]
& \text{subject to} && \cost(\hat{\theta}^{\textnormal{MLMC}}) = \mathcal{C}.
\end{aligned}
\end{equation}
In practice, $\V[\hat{\theta}^{\textnormal{MLMC}}]$ is not known, and we instead rely on the assumption that
$\V[\hat{T}_{\ell}^{(\ell)}] \lesssim n_\ell^{-1} \mathcal{V}_\ell$, with $\mathcal{V}_\ell$ independent of $n_\ell$~\cite{Teckentrup2013_MultilevelMonteCarlo, Mycek2019}.
This is a reasonable assumption that holds for the MLMC estimation of the expectation, variance and covariance~\cite[Table~1]{Mycek2019}.
Note that, for the estimation of the expectation, we have $\V[\hat{T}_{\ell}^{(\ell)}] = n_\ell^{-1}\mathcal{V}_\ell$, with $\mathcal{V}_\ell = \V[Y_{\ell} - Y_{\ell-1}]$ and $Y_{-1} \equiv 0$.
The sample allocation problem \cref{eq:min_sample_alloc} is then replaced with
\begin{equation}\label{eq:min_sample_alloc_approx}
\begin{aligned}
& \operatorname*{minimize}\limits_{n_0,\ldots,n_L\in\mathbb{N}^*} && \sum_{\ell=0}^L n_\ell^{-1} \mathcal{V}_\ell \\[.5em]
& \text{subject to} && \cost(\hat{\theta}^{\textnormal{MLMC}}) = \mathcal{C},
\end{aligned}
\end{equation}
which is equivalent for the expectation, and an approximation for other statistics.
This minimization problem has a unique solution which can be computed analytically (see, e.g., \cite{Giles2008_MultilevelMonteCarlo, Cliffe2011_MultilevelMonteCarlo, Mycek2019}).


\subsection{Multilevel surrogate-based control variate strategies}
In this section, we introduce various surrogate-based control variate strategies in a multilevel framework where a hierarchy of simulators $(f_\ell)_{\ell = 0}^L$ is available. These strategies rely on using the random variables $(g_\ell(\vX))_{\ell = 0}^L$ as control variates, where $g_\ell$ is a surrogate model of the simulator $f_\ell$.

\subsubsection{Multilevel control variates (MLCV)}\label{sec:MCV}

The first strategy, referred to as \emph{multilevel control variates} and hereafter abbreviated MLCV, consists of using the surrogate models at \emph{all levels} to build the control variates in \cref{MCV}. Thus, $\tau_\ell$ corresponds to the statistical measure of the random variable $Z_\ell = g_\ell(\vX)$, and $\hat{\tau}_\ell$ to its unbiased MC estimator.
For instance, the MLCV estimator of the expectation based on an $n_L$-sample $\mathcal{X}^{(L)} = \{\vX^{(1)},\ldots,\vX^{(n_L)}\}$ reads
\begin{equation}\label{eq:MLCV}
    \hat{E}^{\textnormal{MLCV}}[Y](\boldsymbol{\alpha}) 
    = \hat{E}^{(L)}[Y_L]
    - 
    \sum_{\ell=0}^L \alpha_\ell \left(
        \hat{E}^{(L)}[Z_\ell] - \mu_{Z_\ell}
    \right),
\end{equation}
where $\hat{E}^{(L)}[Y_L] = n_L^{-1} \sum_{i=1}^{n_L} f_L(\vX^{(i)})$,  $\hat{E}^{(L)}[Z_\ell] = n_L^{-1} \sum_{i=1}^{n_L} g_\ell(\vX^{(i)})$,
and $\mu_{Z_\ell} = \E[g_\ell(\vX)]$.
Note that this approach does not build on the MLMC methodology described in \cref{sec:MLMC}, but still exploits multilevel information through the surrogates constructed at different levels.

\subsubsection{Multilevel Monte Carlo with control variates (MLMC-CV)}\label{sec:MLMF}

The second strategy consists of improving the MLMC estimator \cref{MLMCestimator} using the surrogate-based control variates $Z_0,\ldots,Z_L$.
Specifically, the MLMC-CV estimator improves the MC estimation of each of the correction terms of the MLMC estimator by using a surrogate model of the corresponding correction term as a control variate.
Note that the MLMF approach proposed in~\cite{Geraci2015_MultifidelityControlVariate, Geraci2017_MultifidelityMultilevelMonte} is based on a similar strategy, using arbitrary low-fidelity models at level in an \emph{approximate} CV setting.
The MLMC-CV estimator reads
\begin{equation}\label{eq:MLMF}
    \hat{\theta}^{\textnormal{MLMC-CV}}(\alpha_1, \ldots, \alpha_L) 
    = \sum_{\ell=0}^L 
    \hat{T}_{\ell}^{\textnormal{CV}}(\alpha_{\ell}),
\end{equation}
where $\hat{T}_{\ell}^{\textnormal{CV}}(\alpha_{\ell})$ is the CV estimator of the multilevel correction term $T_\ell$ (cf. \cref{telescopingsum}),
\begin{equation}\label{eq:correction_MLMF}
\hat{T}_{\ell}^{\textnormal{CV}}(\alpha_{\ell})
=
\hat{T}_{\ell}^{(\ell)}
- 
\alpha_{\ell}(\hat{U}^{(\ell)}_{\ell} - U_\ell),
\end{equation}
with 
$\hat{U}^{(\ell)}_{k}$ an unbiased MC estimator of 
the control variate statistic
$U_k = \tau_k - \tau_{k-1}$, based on the same input sample $\mathcal{X^{(\ell)}}$ as $\hat{T}_{\ell}^{(\ell)}$, again with members of $\mathcal{X^{(\ell)}}$ and $\mathcal{X^{(\ell')}}$ being independent for $\ell \ne \ell'$.
Note that, in \cref{eq:correction_MLMF}, because a single control variate is used at each level, the definition of $\hat{U}^{(\ell)}_{k}$ is used in the specific case where $k=\ell$. The more general definition when $k$ and $\ell$ are not necessarily equal will be useful later in \cref{sec:MLMC-MCV}, where we consider multiple control variates per level.
In practice, $\hat{U}^{(\ell)}_{k}$ may be defined as 
$\hat{U}^{(\ell)}_{k}
= \hat{\tau}^{(\ell)}_{k}
- \hat{\tau}^{(\ell)}_{k-1}$,
where $\hat{\tau}^{(\ell)}_{k}$ is an unbiased estimator of $\tau_k$ using the $n_\ell$-sample $\mathcal{X}^{(\ell)}$.

The optimal value $\alpha_\ell^*$ for $\alpha_\ell$ is obtained individually for each $\ell=0,\ldots,L$ as the optimal (single) CV parameter for $\hat{T}_{\ell}^{\textnormal{CV}}(\alpha_{\ell})$,
\begin{equation}
    \alpha_\ell^*
    =
    \dfrac{\C[\hat{T}_{\ell}^{(\ell)}, \hat{U}^{(\ell)}_{\ell}]}
    {\V[\hat{U}^{(\ell)}_{\ell}]},
\end{equation}
and the resulting variance of the control variate estimator of the correction is
\begin{equation}\label{eq:var_MLMF}
    \V[\hat{T}_{\ell}^{\textnormal{CV}}(\alpha_{\ell}^*)]
    =
    (1 - \rho_\ell^2) 
    \V[\hat{T}_{\ell}^{(\ell)}],
    \quad
    \mbox{with }
    \rho_\ell = 
    \dfrac{\C[\hat{T}_{\ell}^{(\ell)}, \hat{U}^{(\ell)}_{\ell}]}
    {\V[\hat{T}^{(\ell)}_{\ell}]^{1/2}\V[\hat{U}^{(\ell)}_{\ell}]^{1/2}}
    \in [-1, 1].
\end{equation}
The correction estimators $(\hat{T}_{\ell}^{\textnormal{CV}})_{\ell=0}^L$ being mutually independent,  the variance of the optimal MLMC-CV estimator is
\begin{equation}
    \V[\hat{\theta}^{\textnormal{MLMC-CV}}(\alpha_0^*,\ldots,\alpha_L^*)]
    =
    \sum_{\ell=0}^L
    \V[\hat{T}_{\ell}^{\textnormal{CV}}(\alpha_{\ell}^*)]
    =
    \sum_{\ell=0}^L
    (1 - \rho_\ell^2) 
    \V[\hat{T}_{\ell}^{(\ell)}]
    \leq
    \sum_{\ell=0}^L
    \V[\hat{T}_{\ell}^{(\ell)}],
\end{equation}
indicating that the variance of the MLMC-CV estimator is smaller (as long as $\rho_\ell^2 >0$) than that of the MLMC estimator; see \cref{VarMLMC}.
We remark that the variance reduction depends on the (squared) correlation between $\hat{T}_{\ell}^{(\ell)}$ and $\hat{U}^{(\ell)}_{\ell}$, which, in turn, typically relates to some measure of similarity between high-fidelity corrections $Y_\ell - Y_{\ell-1}$ and the corresponding CV corrections (see \cref{app:CVmeanvar} for the expectation and variance estimators in a single-level setting).

In our surrogate-based approach, we may define control variates as $Z_\ell = g_\ell(\vX)$, where $g_\ell$ is a surrogate of $f_\ell$, so that $\tau_\ell$ and $\tau_{\ell-1}$ could be estimated using samples of $Z_\ell$ and $Z_{\ell-1}$, respectively.
It is then crucial to construct the surrogates such that their successive differences $g_\ell - g_{\ell - 1}$ are good approximations of $f_\ell - f_{\ell - 1}$, to ensure a high similarity between $Y_\ell - Y_{\ell-1}$ and $Z_\ell - Z_{\ell-1}$.
However, constructing $g_\ell$ as a surrogate of $f_\ell$ does not give any guarantee on the quality of $g_\ell - g_{\ell-1}$ as a surrogate of $f_\ell - f_{\ell - 1}$.
Instead, in addition to the surrogates models $g_\ell$ of $f_\ell$, we construct surrogate models $h_\ell$ of the differences $f_\ell - f_{\ell-1}$, and we define auxiliary surrogate models $\tilde{g}_{\ell-1} = g_\ell - h_\ell$, for $\ell=1,\ldots, L$.
On each level $\ell$, we then use samples of $Z_\ell = g_\ell(\vX)$ and $\tilde{Z}_{\ell-1} = \tilde{g}_{\ell-1}(\vX)$ for the estimation of $\tau_\ell$ and $\tau_{\ell-1}$, respectively.
As a result, the variance reduction now depends on the similarity between $Y_\ell - Y_{\ell-1}$ and $W_\ell := Z_\ell - \tilde{Z}_{\ell-1} = g_\ell(\vX) - \tilde{g}_{\ell-1}(\vX) = h_{\ell}(\vX)$, where $h_\ell$ has been constructed to ensure such a similarity.
Specifically, for the expectation, the MLMC-CV estimator reads
\begin{multline}
    \hat{E}^{\textnormal{MLMC-CV}}[Y](\alpha_0,\ldots,\alpha_L) 
    = 
    \sum_{\ell=0}^L
    \left( \hat{E}^{(\ell)}[Y_\ell]
    - \hat{E}^{(\ell)}[Y_{\ell-1}] 
    \right) \\
    - \alpha_\ell \left(
        \hat{E}^{(\ell)}[Z_\ell]
        - \hat{E}^{(\ell)}[\tilde{Z}_{\ell-1}]
        -
        (\mu_{Z_\ell} - \mu_{\tilde{Z}_{\ell-1}})
    \right),
\end{multline}
with optimal values of $\alpha_\ell$ given by (see \cref{app:CVmeanvar-mean}, with $M=1$)
\begin{equation}
    \alpha_\ell^* = 
    \dfrac{\C[Y_\ell - Y_{\ell-1}, W_\ell]}
    {\V[Z_\ell - Z_{\ell-1}]},
    \quad \forall \ell=0,\ldots,L,
\end{equation}
resulting in level-dependent reduction factors $1-\rho_\ell^2$, where
\begin{equation}
    \rho_\ell^2
    =
    \dfrac{\C[Y_\ell - Y_{\ell-1}, W_\ell]^2}
    {\V[Y_\ell - Y_{\ell-1}]\V[W_\ell]}
\end{equation}
is the squared correlation coefficient between $Y_\ell - Y_{\ell-1}$ and $W_\ell = h_\ell(\vX)$.
It should be noted that, because of the linearity of the expectation operator and its MC estimator, the use of $\tilde{g}_{\ell-1}$ is superfluous.
Indeed, we may directly define the MLMC-CV estimator of the expectation as
\begin{equation}\label{eq:MLMC-CV_expect}
    \hat{E}^{\textnormal{MLMC-CV}}[Y](\alpha_0, \ldots, \alpha_L) 
    = 
    \sum_{\ell=0}^L
    ( \hat{E}^{(\ell)}[Y_\ell]
    - \hat{E}^{(\ell)}[Y_{\ell-1}] 
    )
    - \alpha_\ell (
        \hat{E}^{(\ell)}[W_\ell]
        -
        \mu_{W_\ell}
    ),
    \end{equation}
with $\mu_{W_\ell} = \E[W_\ell]$.
For the variance, the MLMC-CV estimator reads
\begin{multline}
    \hat{V}^{\textnormal{MLMC-CV}}[Y](\alpha_0,\ldots,\alpha_L) 
    = 
    \sum_{\ell=0}^L
    \left( \hat{V}^{(\ell)}[Y_\ell]
    - \hat{V}^{(\ell)}[Y_{\ell-1}] 
    \right) \\
    - \alpha_\ell \left(
        \hat{V}^{(\ell)}[Z_\ell]
        - \hat{V}^{(\ell)}[\tilde{Z}_{\ell-1}]
        -
        (\sigma^2_{Z_\ell} - \sigma^2_{\tilde{Z}_{\ell-1}})
    \right),
\end{multline}
with $\sigma^2_{Z_\ell} = \V[Z_\ell]$ and $\sigma^2_{\tilde{Z}_{\ell-1}} = \V[\tilde{Z}_{\ell-1}]$.
The resulting level-dependent reduction factors are then related to the correlation between $(Y_\ell - Y_{\ell-1} - \E[Y_\ell - Y_{\ell-1}])^2$ and $(h_\ell(\vX) - \E[h_\ell(\vX)])^2$ (see \cref{app:CVmeanvar-var}, with $M=1$).
Further details on the construction of $h_\ell$ will be given in \cref{sec:expe}.

\subsubsection{Multilevel Monte Carlo with multilevel control variates (MLMC-MLCV)}\label{sec:MLMC-MCV}

We now propose to further improve the MLMC-CV estimator \cref{eq:MLMF} by combining MLMC with the MLCV approach described in \cref{sec:MCV}, resulting in the MLMC-MLCV method.
The approach consists in using, at each level $\ell$, the surrogate-based control variates of \emph{all the levels} $\ell'=0,\ldots,L$, rather than only using those of level $\ell$, as was previously done with the MLMC-CV approach.

The MLMC-MLCV estimator then reads
\begin{equation}\label{estimatorMLCV}
\hat{\theta}^{\textnormal{MLMC-MLCV}} (\boldsymbol{\alpha}_{0}, \ldots, \boldsymbol{\alpha}_{L})
= \sum_{\ell = 0}^{L} \hat{T}_{\ell}^{\textnormal{MLCV}}(\boldsymbol{\alpha}_{\ell}),
\end{equation}
where
$\boldsymbol{\alpha}_{\ell}$ 
denotes the CV parameter at level $\ell$,
and $\hat{T}_{\ell}^{\textnormal{MLCV}}(\boldsymbol{\alpha}_{\ell})$ is the MLCV estimator of $T_\ell$,
\begin{equation}\label{correctionCV}
    \hat{T}_{\ell}^{\textnormal{MLCV}}(\boldsymbol{\alpha}_{\ell}) 
    = 
    \hat{T}_{\ell}^{(\ell)} 
    - 
    \boldsymbol{\alpha}_{\ell}^\intercal
    (\hat{\mathbf{U}}_\ell^{(\ell)} - \mathbf{U}_\ell),
\end{equation}
with
\begin{align}
    \mathbf{U}_0 &= (\tau_k)_{k=0}^L
    & \hat{\mathbf{U}}_0^{(0)} & = (\hat{\tau}_k^{(0)})_{k=0}^L \\
    \mathbf{U}_{\ell} & = (U_k)_{k=1}^L,
    \quad \text{for } \ell>0,
    & \hat{\mathbf{U}}_\ell^{(\ell)} & = (\hat{U}^{(\ell)}_{k})_{k=1}^L, 
    \quad \text{for }  \ell>0,
\end{align}
and with $U_k$ and $\hat{U}^{(\ell)}_{k}$ defined as in \cref{sec:MLMF}. 

Because each term \cref{correctionCV} is an unbiased (multiple) CV estimator of $T_\ell$, the resulting estimator \cref{estimatorMLCV} is also unbiased, and the optimal (variance minimizing) value $\boldsymbol{\alpha}_{\ell}^*$ of $\boldsymbol{\alpha}_{\ell}$ is given individually for each $\ell=0,\ldots,L$ as the optimal (multiple) CV parameter for $\hat{T}_{\ell}^{(\ell)}(\boldsymbol{\alpha}_{\ell})$,
\begin{equation}
\boldsymbol{\alpha}_{\ell}^* 
= \C[\hat{\mathbf{U}}_\ell^{(\ell)}]^{-1}
\C[\hat{\mathbf{U}}_\ell^{(\ell)}, \hat{T}_{\ell}^{(\ell)}].
\end{equation}
The resulting variance is given by 
\begin{equation}\label{eq:var_MLMC_MCV}
\V[\hat{\theta}^{\textnormal{MLMC-MLCV}}(\boldsymbol{\alpha}_{0}^*,\ldots,\boldsymbol{\alpha}_{L}^*)] 
 =
\sum_{\ell = 0}^{L} 
\V[\hat{T}_{\ell}^{\textnormal{MLCV}}(\boldsymbol{\alpha}_{\ell}^*)]
  = 
 \sum_{\ell = 0}^{L}
 (1 - R_\ell^2) \V[\hat{T}_{\ell}^{(\ell)}]
\end{equation}
with
\begin{equation}\label{eq:R2_MLMC-MLCV}
R_\ell^2 
=
\V[\hat{T}_{\ell}^{(\ell)}]^{-1}
\C[\hat{\mathbf{U}}^{(\ell)}, \hat{T}_{\ell}^{(\ell)}]^\intercal
\boldsymbol{\alpha}_{\ell}^*  \in [0,1]. 
\end{equation}
Again, owing to the fact that $R_\ell^2 \leq 1$, the variance of the MLMC-MLCV estimator
is always less than or equal to the variance of the MLMC estimator given by 
$\V[\hat{\theta}_L^{\textnormal{MLMC}}] = \sum_{\ell = 0}^{L} \V[\hat{T}_{\ell}^{(\ell)}]$
(see \cref{VarMLMC}). 

In our surrogate-based approach, $L+1$ surrogate-based control variates can be used for the coarsest level $\ell=0$, namely $g_0, \ldots, g_L$, so that $\boldsymbol{\alpha}_{0} \in \R^{L+1}$.
At correction levels $\ell >0$, we can use $L$ control variates based on $g_1, \ldots, g_L$ and $\tilde{g}_0, \ldots, \tilde{g}_{L-1}$, as described in \cref{sec:MLMF}, so that $\boldsymbol{\alpha}_{\ell} \in \R^{L}$, for $\ell>0$.

The MLMC-MLCV estimator of the expectation $\E[f(\vX)]$ is defined by \cref{estimatorMLCV}, with
\begin{align}
    \hat{T}_0^{\textnormal{MLCV}}(\boldsymbol{\alpha}_0)
    & = 
    \hat{E}^{(0)}[Y_0] - \boldsymbol{\alpha}_0^\intercal (\hat{E}^{(0)}[\vZ] - \boldsymbol{\mu}_{\vZ}), \\[.5em]
    \hat{T}_\ell^{\textnormal{MLCV}}(\boldsymbol{\alpha}_\ell)
    & = 
    \hat{E}^{(\ell)}[Y_\ell] - \hat{E}^{(\ell)}[Y_{\ell-1}] 
    - \boldsymbol{\alpha}_\ell^\intercal 
    (\hat{E}^{(\ell)}[\vec{W}] - \boldsymbol{\mu}_{\vec{W}}),
    \quad \mbox{for } \ell >0,
\end{align}
where 
\begin{align}
  \vZ &= (Z_0, \ldots, Z_L) = (g_0(\vX), \ldots, g_L(\vX)), & \boldsymbol{\mu}_\vZ &= \E[\vZ],\\[.5em]
  \vec{W} &= (W_1, \ldots, W_L) = (h_1(\vX), \ldots h_L(\vX)), & \boldsymbol{\mu}_{\vec{W}} &= \E[\vec{W}]. \label{eq:def_W}
\end{align}
The optimal values $\boldsymbol{\alpha}_\ell^*$ of the CV parameters are given by
\begin{align}
    \boldsymbol{\alpha}_0^* & = \boldsymbol{\Sigma}_0^{-1} \vec{c}_0,
    & \boldsymbol{\Sigma}_0 & = \C[\vZ],
    & \vec{c}_0 & = \C[\vZ, Y_0], \\
    \boldsymbol{\alpha}_\ell^* & = \boldsymbol{\Sigma}_\ell^{-1} \vec{c}_\ell,
    & \boldsymbol{\Sigma}_\ell & = \C[\vec{W}],
    & \vec{c}_\ell & = \C[\vec{W}, Y_\ell - Y_{\ell-1}], 
    \quad \mbox{for } \ell >0,
\end{align}
resulting in $R^2_\ell = \V[Y_\ell - Y_{\ell-1}]^{-1}\vec{c}_\ell^\intercal \boldsymbol{\Sigma}_\ell \vec{c}_\ell$, for $\ell = 0, \ldots, L$.
Note that $\boldsymbol{\Sigma}_\ell = \C[\vec{W}]$ is the same for all $\ell > 0$.
The optimal MLMC-MLCV variance estimator is derived in \cref{app:MLCVmeanvar}.

\begin{remark}\label{rmk:Sigma_pce}
In practice, $\boldsymbol{\Sigma}$ and $\vec{c}_{\ell}$ may be estimated using either a pilot sample or the same sample as for the estimation of $\hat{\mathbf{U}}_\ell^{(\ell)}$ and $\hat{T}_{\ell}^{(\ell)}$. 
Alternatively, in the specific context of PC-based control variates, for the estimation of the expectation, a closed-form expression for $\boldsymbol{\Sigma}$ can be obtained. 
Letting $Z_\ell = g_\ell(\vX) = \sum_{k=0}^{P_g^\ell} \mathfrak{g}_{\ell, k} \Psi_k(\vX)$ and $W_\ell = h_\ell(\vX) = \sum_{k=0}^{P_h^\ell} \mathfrak{h}_{\ell, k} \Psi_k(\vX)$, we have
\begin{align}
    \forall m, m' = 0,\ldots,L, \quad
    [\boldsymbol{\Sigma}_0]_{m, m'}
    & = \C[Z_m, Z_{m'}]
    = \sum_{k=1}^{\min(P_g^m, P_g^{m'})} 
    \mathfrak{g}_{m,k}  \mathfrak{g}_{m',k},\\
    \forall m, m' = 1,\ldots,L, \quad
    [\boldsymbol{\Sigma}_\ell]_{m, m'}
    & = \C[W_m, W_{m'}]
    = \sum_{k=1}^{\min(P_h^m, P_h^{m'})} 
    \mathfrak{h}_{m,k} \mathfrak{h}_{m',k},
    \quad \mbox{for } \ell >0.
\end{align}
\end{remark}

\subsection{Practical details}\label{sec:practical_details}
A summary of the methods is presented in \cref{tabsummary_methods}, including two minor variations of the MLMC-CV and MLMC-MLCV estimators introduced previously, which consist of using fewer surrogate models for the multilevel estimation.
We remark that all the MLMC-like estimators (including the MLMC estimator), hereafter abbreviated MLMC-*, have variance $\sum_{\ell = 0}^{L}
 (1 - R_\ell^2) \V[\hat{T}_{\ell}^{(\ell)}]$, with 
 \begin{itemize}
     \item $R^2_\ell = 0$ for plain MLMC;
     \item $R^2_\ell = \rho^2_\ell$ as defined in \cref{eq:var_MLMF} for MLMC-CV; and 
     \item $R^2_\ell$ defined by \cref{eq:R2_MLMC-MLCV} for MLMC-MLCV.
 \end{itemize}
 For all these methods, we will further assume that 
$\V[\hat{T}_{\ell}^{(\ell)}] \lesssim n_\ell^{-1} \mathcal{V}_\ell$
(see \cref{sec:MLMC}), which
implies that 
$\V[\hat{T}_{\ell}^{\textnormal{MLMC-*}}(\boldsymbol{\alpha}_{\ell}^*)]
=
(1 - R_\ell^2) \V[\hat{T}_{\ell}^{(\ell)}]
\lesssim
n_\ell^{-1} \mathcal{V}^{\textnormal{CV}}_\ell$,
with $\mathcal{V}^{\textnormal{CV}}_\ell := (1 - R_\ell^2)\mathcal{V}_\ell$.

\begin{sidewaystable}[!htbp]
    \setlength{\defaultaddspace}{.5em}
    \centering
    \small
    \caption{Summary of the methods. The first three are state-of-the art methods, the next three are the novel multilevel methods proposed in this paper, and the last two are variants of the proposed methods.}
    \label{tabsummary_methods}
    \begin{tabular}{p{9.5cm}ll}
    \toprule
Method & Form of the estimator & Eq. \\
\midrule
\textbf{Monte Carlo (MC)}. 
&
$\hat{\theta}$ &
\\
\addlinespace
\textbf{Control Variates (CV)}~\cite{Lavenberg1977_ApplicationControlVariables, Lavenberg1981_PerspectiveUseControl, Lavenberg1982_StatisticalResultsControl, Nelson1990_ControlVariateRemedies, Lemieux2017_ControlVariates}.
& 
$
\hat{\theta} - \sum\limits_{m=1}^M \alpha_m (\hat{\tau}_m - \tau_m)$. &
\ref{MCV}
\\
\addlinespace
\textbf{Multilevel Monte Carlo (MLMC)}~\cite{Giles2008_MultilevelMonteCarlo, Giles2015_MultilevelMonteCarloa}. 
& 
$
\hat{\theta}_{0}^{(0)} + \sum\limits_{\ell=1}^{L} \hat{T}_{\ell}^{(\ell)}$. &
\ref{MLMCestimator}
\\
\addlinespace
\midrule
\textbf{Multilevel Control Variates (MLCV)}. 
CV method where the CVs are based on surrogate models of simulators of different levels of fidelity. & 
$
\hat{\theta} - \sum\limits_{\ell=0}^L \alpha_\ell (\hat{\tau}_\ell - \tau_\ell)$. &
\ref{MCV}
\\
\addlinespace
\textbf{MLMC-CV}. MLMC with one CV at each correction level based on surrogate models of the simulators on the corresponding level.
Corresponds to a surrogate-based MLMF~\cite{Geraci2015_MultifidelityControlVariate, Geraci2017_MultifidelityMultilevelMonte} where the exact statistics of the CVs are known. & 
$
\hat{\theta}_{0}^{(0)} - \alpha_0 (\hat{\tau}_0^{(0)} - \tau_0)
+ \sum\limits_{\ell=1}^{L} \left(
\hat{T}_{\ell}^{(\ell)} - \alpha_\ell (\hat{U}_\ell^{(\ell)} - U_\ell)
\right)$. &
\ref{eq:MLMF},~\ref{eq:correction_MLMF}
\\
\addlinespace
\textbf{MLMC-MLCV}.
MLMC-CV with the CVs based on the surrogate models $g_0,\ldots,g_L$ on level 0 and on $h_1,\ldots,h_L$ on levels $\ell>0$. & 
$
\hat{\theta}_{0}^{(0)} - \boldsymbol{\alpha}_0^\intercal 
\begin{bmatrix}
    \hat{\tau}_0^{(0)} - \tau_0\\
    \vdots\\
    \hat{\tau}_L^{(0)} - \tau_L
\end{bmatrix}
+ \sum\limits_{\ell=1}^{L} \left(
\hat{T}_{\ell}^{(\ell)} - \boldsymbol{\alpha}_\ell^\intercal 
\begin{bmatrix}
    \hat{U}_1^{(\ell)} - U_1\\
    \vdots\\
    \hat{U}_L^{(\ell)} - U_L
\end{bmatrix}
\right)$.
&
\ref{estimatorMLCV},~\ref{correctionCV}
\\
\addlinespace
\midrule
\textbf{MLMC-CV[0]}. 
MLMC-CV using only one CV based on the surrogate $g_0$ on level 0 and no CVs on levels $\ell>0$ &
$
\hat{\theta}_{0}^{(0)} - \alpha_0(\hat{\tau}_0^{(0)} - \tau_0)
+ \sum\limits_{\ell=1}^{L} \hat{T}_{\ell}^{(\ell)}$. &
\\
\addlinespace   
\textbf{MLMC-MLCV[0]}. 
MLMC-MLCV using only CVs based on the surrogates $g_0$ and $g_1$ on level 0, and $h_1$ on levels $\ell>0$. & 
$
\hat{\theta}_{0}^{(0)}
- \boldsymbol{\alpha}_0^\intercal
\begin{bmatrix}
    \hat{\tau}_0^{(0)} - \tau_0\\
    \hat{\tau}_1^{(0)} - \tau_1
\end{bmatrix}
+ \sum\limits_{\ell=1}^{L} \left(
    \hat{T}_{\ell}^{(\ell)} - \alpha_\ell(\hat{U}_1^{(\ell)} - U_1)
\right)$. &
\\
\bottomrule
    \end{tabular}
\end{sidewaystable}

In the surrogate-based variants of MLMC, the cost of evaluating the surrogate models is assumed to be negligible compared to the costs of evaluating the simulators $f_0, \ldots, f_L$.
Therefore, the total computational cost of the MLMC-* estimator reduces to the cost of the MLMC estimator, $\cost(\hat{\theta}^{\textnormal{MLMC-MLCV}}) 
= 
\cost(\hat{\theta}^{\textnormal{MLMC}})$, given by \cref{eq:cost_MLMC}.
Similarly to the case of the MLMC estimator (see, e.g., \cite{Mycek2019}), the optimal sample sizes $(n_\ell^*)_{\ell=0}^L$ such that $\sum_{\ell=0}^L \mathcal{V}^{\textnormal{CV}}_\ell / n_\ell^*$ is minimal under a constrained computational budget of $\mathcal{C}$ are given by
\begin{equation}\label{eq:nl_opt}
    n_\ell^*
    =
    \dfrac{\mathcal{C}}{\mathcal{S}_L}
    \sqrt{\dfrac{\mathcal{V}^{\textnormal{CV}}_\ell}{\mathcal{C}_\ell + \mathcal{C}_{\ell-1}}},
    \quad
    \mbox{with }
    \mathcal{S}_\ell
    :=
    \sum_{\ell'=0}^\ell \sqrt{(\mathcal{C}_{\ell'} + \mathcal{C}_{\ell'-1}) \mathcal{V}^{\textnormal{CV}}_{\ell'}},
\end{equation}
so that $\sum_{\ell=0}^L \mathcal{V}^{\textnormal{CV}}_\ell / n_\ell^* = S_L^2 / \mathcal{C}$.
As a consequence, 
\begin{equation}\label{eq:var_bound_nl_opt_SL2}
    \V[\hat{\theta}_L^{\textnormal{MLMC-MLCV}}(\boldsymbol{\alpha}_{\ell}^* )] 
    \lesssim
    \dfrac{S_L^2}{\mathcal{C}},
\end{equation}
with an equality between the left- and right-hand sides for the expectation estimators.

In practice, $\mathcal{V}^{\textnormal{CV}}_{\ell}$ is not known and must be estimated for each level.
In this work, we consider the sequential algorithm proposed in~\cite[Algorithm~2]{Mycek2019} for the MLMC.
The algorithm starts from an initial, small number of samples $n_{\ell}^{\textnormal{init}}$ on each level. Then, it selects the optimal level on which to increase the sample size by an inflation factor $r_\ell > 1$, i.e.\ the level $\ell^*$ on which the reduction in total variance relative to the additional computational effort achieved by inflating the sample size by $r_{\ell^*}$ is maximal. 

\begin{algorithm}[!htbp]
\caption{Simplified MLMC-* algorithm inspired by~\cite{Mycek2019}.}
\begin{algorithmic}[1]
\REQUIRE $n_\ell^{\textnormal{init}} > 1$, $r_\ell > 1$, surrogate models (depending on the method), and budget $\mathcal{C}$.
\STATE Set consumed budget to $\tilde{\mathcal{C}}=0$ and $\delta n_\ell = n_\ell^{\textnormal{init}}$ samples on levels $\ell \leq L$;
\WHILE {$\tilde{\mathcal{C}} \leq \mathcal{C}$}
\STATE compute $\delta n_\ell$ samples on each level by evaluating $f_\ell$ and the appropriate surrogates;
\STATE update sample size on each level: $n_\ell \leftarrow n_\ell + \delta n_\ell$;
\STATE update consumed budget: $\tilde{\mathcal{C}} \leftarrow \tilde{\mathcal{C}} + \sum_{\ell=0}^{L} \delta n_\ell (\mathcal{C}_\ell+\mathcal{C}_{\ell-1})$;
\STATE estimate the optimal CV parameter(s) on each level;
\STATE compute/update CV estimates for $\hat{T}_\ell^{(\ell)}$ and $\mathcal{V}^{\textnormal{CV}}_\ell$ from samples on levels $\ell \leq L$;
\STATE select level 
    $\ell^* = \argmax \limits_{0 \leq \ell \leq L}  
    \dfrac{\mathcal{V}^{\textnormal{CV}}_\ell}{ r_\ell n_\ell^2(\mathcal{C}_\ell+\mathcal{C}_{\ell-1})}$;
\STATE $\delta n_{\ell^*} \leftarrow \lfloor (r_{\ell^*} - 1) n_{\ell^*} \rfloor$, \; $\delta n_{\ell \ne \ell^*} \leftarrow 0$;
\ENDWHILE
\RETURN $\hat{\theta}_L^{\textnormal{MLMC-*}}$, the MLMC-* estimate of $\theta_L$.
\end{algorithmic}
\label{algoMLCV}
\end{algorithm}


\section{Numerical experiments}\label{sec:expe}
We demonstrate the value of our MLMC-MLCV method on the uncertain heat equation problem proposed in~\cite{Geraci2015_MultifidelityControlVariate}, with seven random input variables modeling the uncertainty in the diffusion coefficient and the initial condition. 
Although academic, this problem is nonetheless challenging because of its moderate stochastic dimensionality, the non-linearity of the solution with respect to the inputs, and the high variance of the random output. 
The test problem is summarized in \cref{numModel}. 
The surrogate models used for the control variates are described in \cref{surrModel}, and the results from numerical experiments are presented and discussed in \cref{resModel}.

\subsection{Problem description}\label{numModel}
We consider the partial differential equation describing the time-evolution of the temperature $u(x,t;\vX)$ in a 1D rod of unit length over the time interval $[0,T]$, with uncertain (random) initial data $u_0$ and thermal diffusivity $\nu$,
\begin{equation}\label{eq:heateq}
\begin{cases}
\dfrac{\partial u(x,t;\vX)}{\partial t} = \nu(\vX) \dfrac{\partial^2 u(x,t;\vX)}{\partial x^2}, & x \in \mathcal{D} := (0,1), \quad t \in [0,T], \\[\bigskipamount]
u(x,0;\vX) = u_0(x;\vX), & x \in \mathcal{D}, \\[\medskipamount]
u(0,t; \vX) = u(1,t; \vX) = 0, & t \in [0,T],
\end{cases}
\end{equation}
where $\vX \colon \Omega \to \Xi$ is a random vector modelling the uncertainty in the input parameters, and where $\nu(\vX) > 0$ almost surely.
The solution of \cref{eq:heateq} may be expressed as
\begin{equation}\label{eq:heateq_sol_fourier}
u(x,t;\vX)
= \sum_{k=1}^{\infty}
a_k(\vX) \exp(-\nu(\vX) k^2\pi^2 t) \sin(k\pi x)
\end{equation}
with
\begin{equation}\label{eq:heateq_sol_fourier_ak}
a_k(\vX) = 2 \int_{\mathcal{D}} u_0(x; \vX) \sin(k\pi x) \, \mathrm{d}x .
\end{equation}
The initial condition is chosen to have the same prescribed form as in~\cite{Geraci2015_MultifidelityControlVariate}. Specifically, we consider
$u_0(x; \vX) 
= \mathcal{G}(\vX) \mathcal{F}_1(x) 
+ \mathcal{I}(\vX) \mathcal{F}_2(x)$
with
\begin{align}%
\mathcal{F}_1(x) &= \sin(\pi x),\\
\mathcal{F}_2(x) &= \sin(2 \pi x)+\sin(3 \pi x)+50(\sin(9 \pi x) + \sin(21 \pi x)),\\
\mathcal{I}(\vX) &= \frac{7}{2} \left[\sin(X_1)+7\sin(X_2)^2+0.1X_3^4\sin(X_1) \right],\\
\mathcal{G}(\vX) &= 50 (4|X_5| - 1)(4|X_6| - 1)(4|X_7| - 1),
\end{align}
which allows to control the spectral content of the solution $u$.
Furthermore, as in~\cite{Geraci2015_MultifidelityControlVariate}, the diffusion coefficient is modelled by $\nu(\vX) = X_4$.
The random output variable of interest is defined as the integral of the temperature along the rod at final time $T$,
\begin{align}
    \mathcal{M}(\vX) &= \int_{\mathcal{D}} u(x,T; \vX) \, \mathrm{d}x\\
    &= \sum_{k=1}^{\infty}
    a_k(\vX)
    \int_{\mathcal{D}} \exp(-\nu(\vX) k^2\pi^2 T) \sin(k\pi x) \, \mathrm{d}x 
    \label{eq:heateq_Q}\\
    &= 
    \mathcal{G}(\vX) \mathcal{H}_1(\vX)
    +
    \mathcal{I}(\vX) \left[
    \mathcal{H}_3(\vX)
    +
    50 \mathcal{H}_9(\vX)
    +
    50 \mathcal{H}_{21}(\vX)
    \right],
\end{align}
where $\mathcal{H}_k(\vX) = \frac{2}{k\pi} \exp(-\nu(\vX) k^2\pi^2 T)$.
In this experiment, we seek to estimate the expectation $\E[\mathcal{M}(\vX)]$, for a given uncertain setting.
Consistently with~\cite{Geraci2015_MultifidelityControlVariate}, we consider the random variables $X_1,\ldots,X_7$ to be independent and distributed as
\begin{align}\label{eq:distrib}
    X_1, X_2, X_3 &\sim \mathcal{U}[-\pi,\pi],
    & X_4 &\sim \mathcal{U}[\nu_{\textnormal{min}}, \nu_{\textnormal{max}}], 
    & X_5, X_6, X_7 &\sim \mathcal{U}[-1,1].
\end{align}
The expected value $\E[\mathcal{M}(\vX)]$ is then given by
\begin{equation}\label{eq:mu_exact}
    \E[\mathcal{M}(\vX)]
    = 50 H_1 + \frac{49}{4}\left(H_3 + 50 H_9 + 50 H_{21} \right),
\end{equation}
where 
\begin{equation}
    H_k
    = \E[\mathcal{H}_k(\vX)]
    = 
    \frac{2}{k^3 \pi^3 T}
    \frac{\exp(-\nu_{\textnormal{min}}k^2\pi^2 T) - \exp(-\nu_{\textnormal{max}}k^2\pi^2 T)}
    {\nu_{\textnormal{max}} - \nu_{\textnormal{min}}}.
\end{equation}
Finally, we set $T=0.5$, $\nu_{\textnormal{min}} = 0.001$ and $\nu_{\textnormal{max}} = 0.009$, resulting in $\E[\mathcal{M}(\vX)] \approx 41.98$.

Numerically, $\mathcal{M}(\vX)$ is approximated by truncating the Fourier expansion in \cref{eq:heateq_Q} to ${K < \infty}$ modes and by approximating the integrals in \cref{eq:heateq_Q,eq:heateq_sol_fourier_ak} by a trapezoidal quadrature rule with equispaced nodes in $[0,1]$.
The multilevel hierarchy of simulators $\{f_\ell\}_{\ell=0}^L$ is then defined according to the number of quadrature nodes $N_\ell$ used for the approximation at level $\ell$.
Specifically, $\mathcal{M}(\vX)$ is approximated at level $\ell$ by 
\begin{equation}\label{eq:heateq_Y_discr}
    Y_\ell = f_\ell(\vX)
    = \sum_{k=1}^{K} A_k^\ell(\vX) B_k^\ell(\vX),
\end{equation}
with 
\begin{equation}
    A_k^\ell(\vX)
    =
    2 \sum_{i=1}^{N_\ell} w_i u_0(x_i; \vX) \sin(k\pi x_i),
    \;
    B_k^\ell(\vX)
    = \exp(-\nu(\vX) k^2\pi^2 T) \sum_{i=1}^{N_\ell} w_i \sin(k\pi x_i),
\end{equation}
where $\{(x_i, w_i)\}_{i=1}^{N_\ell}$ are the pairs of quadrature nodes and associated weights on level $\ell$. 
It is then natural to assume that the computational cost $\mathcal{C}_\ell$ of an evaluation of $f_\ell$ is $\mathcal{O}(KN_\ell)$.
The statistic of interest is thus $\theta = \theta_L = \E[f_L(\vX)]$, whose MC estimator 
$\hat{\theta}_L
=
n_L^{-1}\sum_{i=1}^{n_L} f_L(\vX^{(L,i)})$
will represent the baseline estimator for our experiments.
Besides, the quality of all the presented estimators will be assessed in terms of their root mean square error (RMSE) w.r.t.\ the exact statistic $\E[\mathcal{M}(\vX)]$ given by \cref{eq:mu_exact}.

In the following experiments, we set the number of quadrature nodes $K=21$ and the number of levels to 4 (i.e.\ $L=3$).
Furthermore, we set $N_\ell=120\times2^{L-\ell}$ so that evaluating $f_\ell$ is twice as expensive as evaluating $f_{\ell-1}$.
Table~\ref{QuadratureLevels} summarizes the number of quadrature nodes and the evaluation cost per level.
Note that the costs are normalized so that $\mathcal{C}_3 = 1$.

\begin{table}[!ht]
\centering
\caption{Number of quadrature nodes $N_\ell$, simulator evaluation cost $\mathcal{C}_\ell$ and MLMC correction evaluation cost $\mathcal{C}_\ell+\mathcal{C}_{\ell-1}$ per level.}
\begin{tabular}[t]{ccccc}
\toprule
$\ell$ & 0 & 1 & 2 & 3\\
\midrule
$N_\ell$ & 15 & 30 & 60 & 120 \\
$\mathcal{C}_\ell$ & 0.125 & 0.25 & 0.5 & 1 \\
$\mathcal{C}_\ell + \mathcal{C}_{\ell-1}$ & 0.125 & 0.375 & 0.75 & 1.5 \\
\bottomrule
\end{tabular}\label{QuadratureLevels}
\end{table}%

\subsection{Surrogate models}\label{surrModel}
We will mostly use PC models (see \cref{sec:pce}) for the surrogate-based CV estimators. 
Constructing high-quality surrogates in 7 dimensions can be hard, especially in the presence of non-linearities and with a limited sample size. 
To avoid overfitting, we resort to the \emph{least angle regression} (LARS) procedure~\cite{Efron2004_LeastAngleRegression}, which is a model-selection regression method that promotes sparsity.
More precisely, we employ the basis-adaptive hybrid LARS algorithm proposed by~\cite[Fig.~5]{Blatman2011_AdaptiveSparsePolynomial} for the selection of sparse PC bases.
For a given design of experiment (DoE) for the PC surrogate construction, this algorithm applies the LARS procedure on candidate PC bases $\mathcal{A}_p$ of increasing total polynomial degree $p = 1, \ldots,p_{\textnormal{max}}$, resulting for each $p$ in the selection of a limited number $|\tilde{\mathcal{A}}_p| < |\mathcal{A}_p|$ of basis polynomial functions.
For each $p$, a PC surrogate is constructed by classical least-squares regression on the corresponding reduced (or active) PC basis $\tilde{\mathcal{A}}_{p}$, and its quality is estimated using a corrected leave-one-out cross-validation procedure~\cite{Blatman2011_AdaptiveSparsePolynomial}.
The best surrogate according to this quality measure is eventually retained, and the associated reduced PC basis is denoted by $\tilde{\mathcal{A}}_{p^*}$.

In our experiments, we set $p_{\textnormal{max}} = 16$ and the construction budget to 400 times the evaluation cost of $f_3$, i.e.\ $\mathcal{C}^{\textnormal{DoE}} = 400$. 
This budget is distributed equally among the different levels, so that the associated evaluation cost on each level corresponds to the cost of 100 $f_3$ evaluations, i.e.\ $n_\ell^{\textnormal{DoE}} \mathcal{C}_\ell = 100$, as reported in \cref{tab:pces,tab:pces_ml}. 
Once constructed, the quality of a surrogate $g$ of $f$ is assessed in terms of the $Q^2$ measure,
\begin{equation}
    Q^2(g, f)=1-\frac{\E[(g(\vX)-f(\vX))^2]}{\V[f(\vX)]},
\end{equation}
estimated using a test sample of size $n_{\textnormal{test}}=\num{10000}$, which is more robust than the corrected leave-one-out measure used for the model selection.
The higher the $Q^2$ value, the higher-quality the associated surrogate.

For the MLCV estimator, we learn a PC model $g_\ell$ of $f_\ell$ from a training DoE of size $n_\ell^{\textnormal{DoE}}$ generated by latin hypercube sampling (LHS) improved by simulated annealing. 
The training DoE samples for the surrogate construction are different for each level and independent. 
\Cref{tab:pces} summarizes the properties of the different PC models.
Except for $g_3$, which is built with only 100 points, all the PC models have a good $Q^2$, greater than 0.8. 
We observe that the basis-adaptive LARS algorithm has selected a decreasing polynomial degree $p^*$ with level $\ell$, while retaining only a limited number $|\tilde{\mathcal{A}}_{p^*}|$ of polynomials in these reduced bases.
Thus, although $g_1$ and $g_2$ have distinct degrees, the sizes of the associated reduced bases are similar.

\begin{table}[!htbp]
\caption{PC models for CV and MLCV estimators, with their sample size, degree and quality measure. These models are built with the basis-adaptive LARS algorithm of~\cite[Fig.~5]{Blatman2011_AdaptiveSparsePolynomial}.}
\label{tab:pces}
\centering
\begin{tabular}[c]{ccccc}
\toprule
PC models & $g_0$ & $g_1$ & $g_2$ & $g_3$\\
\midrule
$n_\ell^{\textnormal{DoE}}$ & 800 & 400 & 200 & 100 \\
$p^*$ & 10 & 8 & 6 & 4 \\
$|\tilde{\mathcal{A}}_{p^*}|$ & 211 & 77 & 74 & 14 \\
$Q^2$ & 0.98 & 0.94 & 0.86 & 0.59\\
\bottomrule
\end{tabular}
\end{table}

For the MLMC-based estimators, we learn a PC model $g_\ell$ of $f_\ell$ for $\ell=0,\ldots,L$ and a PC model $h_\ell$ of $f_\ell-f_{\ell-1}$ for $\ell = 1,\ldots,L$. 
In practice, training points used for the construction of $g_0, \ldots, g_L$ may be reused for the construction of $h_1, \ldots, h_L$ using nested DoEs.
However, it should be noted that the generation of nested LHS DoEs is not as straightforward as for purely random DoEs.
While using nested random DoEs is a perfectly valid strategy, we opt for an alternative choice based on LHS.
Specifically, we first generate a DoE $\mathcal{X}^{(0)}_{\textnormal{DoE}}$ of size $n_0^{\textnormal{DoE}}$ using LHS improved by simulated annealing.
Then, for $\ell=1, \ldots, L$, we sequentially extract a DoE $\mathcal{X}^{(\ell)}_{\textnormal{DoE}} \subset \mathcal{X}^{(\ell-1)}_{\textnormal{DoE}}$ of size $n_\ell^{\textnormal{DoE}}$.
The subset $\mathcal{X}^{(\ell)}_{\textnormal{DoE}}$ is selected such that it has minimal centered $L^2$ discrepancy among a pool of 1 million random candidate subsets.
\Cref{tab:pces_ml} summarizes the properties of these different PC models. The PC models again have a quality measure higher than 0.8, except for $g_3$ and $h_3$, which were constructed from only 100 evaluations. 
Note that the values of $p^*$, $|\tilde{\mathcal{A}}_{p^*}|$ and $Q^2$ are of the same order of magnitude as those of the PC surrogate models reported in \cref{tab:pces}.

\begin{table}[!htbp]
\caption{PC models for the estimators combining MLMC and CV techniques, with their sample size, degree and quality measure. More precisely, $g_\ell$ is the surrogate of $f_\ell$, $h_\ell$ is the surrogate of $f_\ell-f_{\ell-1}$ and the samples used to train $h_\ell$ are also used to train $g_\ell$ and $g_{\ell-1}$. These models are built with the basis-adaptive LARS algorithm of~\cite[Fig.~5]{Blatman2011_AdaptiveSparsePolynomial}.}
\label{tab:pces_ml}
\centering
\begin{tabular}[c]{cccccccc}
\toprule
PC models & $g_0$ & $g_1$ & $g_2$ & $g_3$ & $h_1$ & $h_2$ & $h_3$\\
\midrule
$n_\ell^{\textnormal{DoE}}$ & 800 & 400 & 200 & 100 & 400 & 200 & 100\\
$p^*$ & 10 & 8 & 8 & 2 & 9 & 7 & 5 \\
$|\tilde{\mathcal{A}}_{p^*}|$ & 211 & 83 & 57 & 6 & 112 & 71 & 30 \\
$Q^2$ & 0.98 & 0.95 & 0.86 & 0.21 & 0.99 & 0.82 & 0.63 \\
\bottomrule
\end{tabular}
\end{table}

Hereafter, the CV and MLCV estimators use the PC models of \cref{tab:pces}, while the MLMC-based estimators use the PC models presented in \cref{tab:pces_ml}.

\subsection{Results}\label{resModel}

First, in \cref{res:cv}, we illustrate the use of one or several control variates to reduce the variance of a single-level MC estimator. 
Then, we compare the MLCV and MLMC-MLCV approaches with the MC and MLMC estimators in \cref{res:mlmcmcv}, and we discuss variants of the MLMC-CV and MLMC-MLCV approaches, considering only a limited subset of the surrogate models, in \cref{res:mlmcmcv_vs_mlmf}. 
We conclude the analysis by reporting the estimation budget allocation across levels resulting from the various MLMC-based methods in \cref{res:budget_alloc}.
%
In practice, all the MLMC-based estimators are built using \cref{algoMLCV}. 
Unless stated otherwise, the parameters are set to $n_{\ell}^{\textnormal{init}}=30$ and $r_\ell = 1.1$ for $\ell=0,\ldots,3$.
The quality of the various estimators will be assessed in terms of their RMSE w.r.t.\ $\E[\mathcal{M}(\vX)]$, estimated from 500 repetitions of the experiment.

\subsubsection{Single-level MC and control variates}\label{res:cv}

In this first part, we consider only the finest simulator $f_3$ and try to reduce the variance of the MC estimator of $\E[f_3(\vX)]$ by means of surrogate-based CVs. 
For that purpose, we consider a first-order Taylor polynomial expansion $g^{\text{T}_1}_{3}$ around $\boldsymbol{\mu}_\vX = \mathbb{E}[\vX]$ (see \cref{sec:taylor,app:taylor}) and the PC model $g^{\text{PC}}_{3}$ described in \cref{tab:pces}. 

\begin{table}[ht]
\caption{Pearson correlation coefficients between the finest simulator $f_3$ and the corresponding PC model (see \cref{tab:pces}) and $T_1$ model around $\boldsymbol{\mu}_\vX = \mathbb{E}[\vX]$, estimated with a sample of size $\num{1000}$.}
\label{tab:cv_pearson}
\renewcommand{\arraystretch}{1.2}
\centering
\begin{tabular}[t]{@{\;}c|ccc@{\;}}
& $Y_3$ & $g^{\textnormal{PC}}_{3}(\vX)$ & $g^{\text{T}_1}_{3}(\vX)$ \\
\hline
$Y_3$ & 1.00 & 0.80 & 0.57 \\
$g^{\textnormal{PC}}_{3}(\vX)$ & 0.80 & 1.00 & 0.64 \\
$g^{\textnormal{T}_1}_{3}(\vX)$ & 0.57 & 0.64 & 1.00 \\
\hline
\end{tabular}%
\end{table}

\Cref{tab:cv_pearson} shows that $g^{\text{PC}}_{3}(\vX)$ is well correlated with $Y_3=f_3(\vX)$, with a Pearson coefficient of 0.8, while $g^{\text{T}_1}_{3}$ is less so, with a coefficient of 0.57, which may be explained by the strong non-linearity of $f_3$.
According to \cref{VRMCV}, these correlation coefficients lead to a theoretical variance reduction factor $R^2$ of 64\% when using $g^{\text{PC}}_{3}(\vX)$ as a single CV, and of about 32\% when using $g^{\text{T}_1}_{3}(\vX)$ as a single CV.
Using both CVs results in a reduction factor of 65\%. 
This minor increase in $R^2$ can be explained by the modest correlation coefficient of 0.64 between $g^{\text{T}_1}_{3}(\vX)$ and $g^{\text{PC}}_{3}(\vX)$.
Note that a reduction factor of $R^2$ in the variance corresponds to a reduction factor of $1-\sqrt{1-R^2}$ in the standard deviation.

\begin{figure}[ht]
\centering
\begin{subfigure}[t]{0.49\textwidth}
\centering
\includegraphics[width=\textwidth]{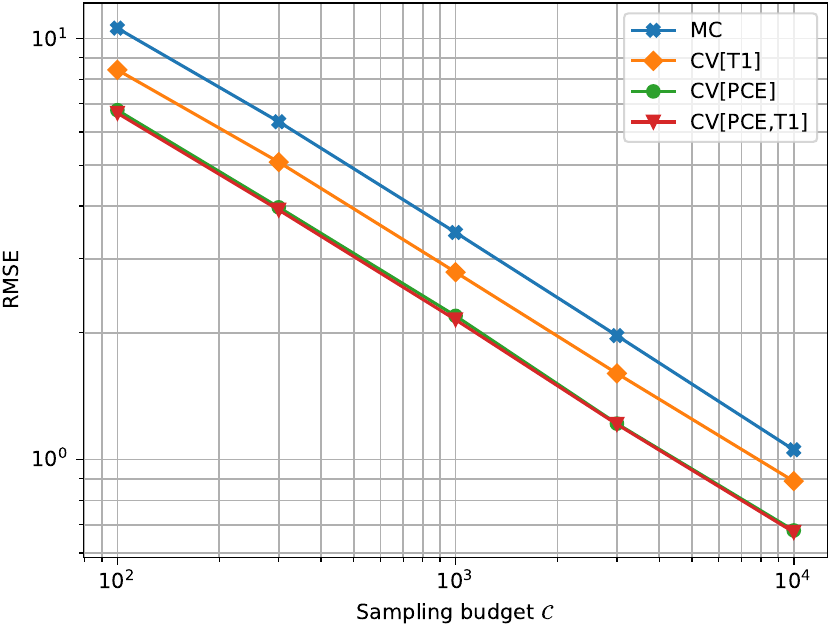}
\caption{The evaluation cost does not include the cost of constructing the surrogate models.}
\label{fig:cv_without_offset}
\end{subfigure}\hfill%
\begin{subfigure}[t]{0.49\textwidth}
\centering
\includegraphics[width=\textwidth]{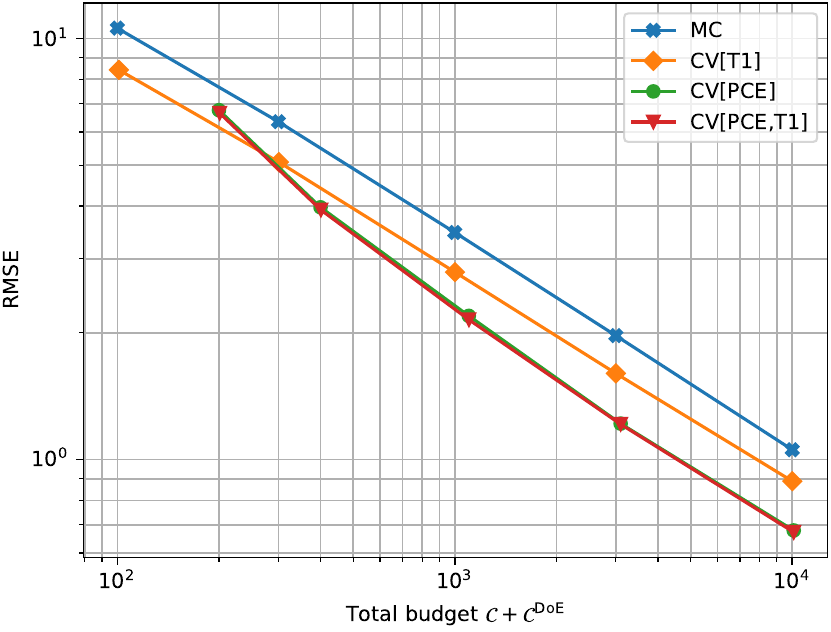}
\caption{The evaluation cost includes the cost of constructing the surrogate models.}
\label{fig:cv_with_offset} 
\end{subfigure}
\caption{RMSE of the MC and CV estimators of $\theta=\E[f_3(\vX)]$ (see \cref{eq:heateq_Y_discr}) with respect to the number $\mathcal{C}\in\{\num{100};\num{300};\num{1000};\num{3000};\num{10000}\}$ of $f_3$ evaluations. The CV uses either a first-order Taylor approximation (T1) of $f_3$ or a PC approximation of $f_3$ trained from $100$ evaluations ($\mathcal{C}^{\textnormal{DoE}} = 100$), or both.
The RMSEs are computed using 500 replicates.}
\label{fig:cv_with_or_without_offset}
\end{figure}

These theoretical expectations are reflected in \cref{fig:cv_without_offset} with an RMSE reduction of about 20\% when using $g^{\text{T}_1}_{3}$ and 40\% when using  $g^{\text{PC}}_{3}$ alone or jointly with $g^{\text{T}_1}_{3}$. 
This figure confirms that $g^{\text{PC}}_{3}$ provides a better CV than $g^{\text{T}_1}_{3}$, reducing the RMSE of the MC estimator twice as much, regardless of the computational budget. 
However, the construction cost of the surrogate is not the same.
While constructing $g^{\text{T}_1}_{3}$ requires only one evaluation of $f_3$ and its Jacobian matrix, namely at $\boldsymbol{\mu}_\vX$, the construction of $g^{\text{PC}}_{3}$ involved 100 $f_3$ evaluations. 
In the case where the surrogate is built specifically for the estimation of the statistic, the real estimation cost is higher as it includes this construction cost. \Cref{fig:cv_with_offset} illustrates this difference by including the surrogate construction cost in the total evaluation cost. 
As a result, a significant offset appears when using $g^{\text{PC}}_{3}$, since part of the computational budget (namely 100) is used for the surrogate construction and thus not for the estimation. 
Specifically, the RMSEs of the CV estimators using $g^{\text{PC}}_{3}$ get below that of the CV estimator using only $g^{\text{T}_1}_{3}$ for a budget of 300 $f_3$ evaluations.
For budgets under 200, using only $g^{\text{T}_1}_{3}$ is preferable, even without the analytical gradient available, which would then incur a construction cost of 8 $f_3$ evaluations to approximate the gradient using finite differences.
This figure also illustrates \cref{prop:add_cv}, that is, increasing the number of control variates improves (rigorously speaking, does not deteriorate) the variance of the CV estimator.

\subsubsection{MLCV and MLMC-MLCV}\label{res:mlmcmcv}

\Cref{fig:mc_vs_ml} compares the MLCV and MLMC-MLCV estimators proposed in \cref{eq:MLCV,estimatorMLCV} with the classical MC and MLMC estimators. 
This comparison is repeated for different budgets $\mathcal{C}$, expressed in terms of the equivalent number of $f_3$ evaluations.
Note that the MC and MLCV estimators only use evaluations of the finest simulator $f_3$, so that $\mathcal{C} = n_3$, while MLMC-based estimators use all the simulators, so that $\mathcal{C}$ is given in terms of the MLMC cost \cref{eq:cost_MLMC}, indeed corresponding to the equivalent number of $f_3$ evaluations, since the costs are normalized such that $\mathcal{C}_3 = 1$. 
From here on, only PC-based surrogates will be used, so that we omit the superscript ``PC'' in the notations of the surrogate models.

\begin{figure}[!ht]
\centering
\begin{subfigure}[t]{0.49\textwidth}
\centering
\includegraphics[width=\textwidth]{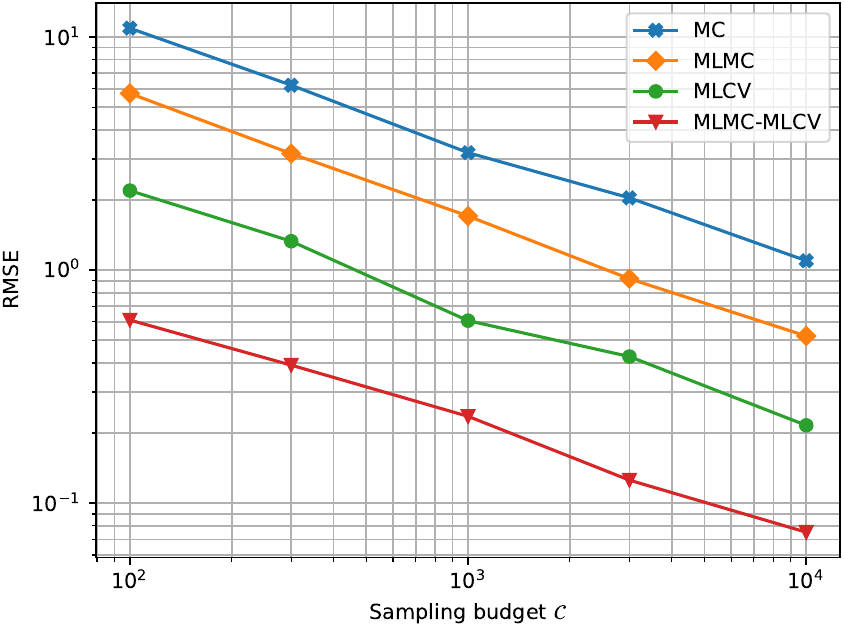}
\caption{The evaluation cost does not include the cost of construction of the surrogate models.}
\label{fig:mc_vs_ml_without_offset}
\end{subfigure}\hfill%
\begin{subfigure}[t]{0.49\textwidth}
\centering
\includegraphics[width=\textwidth]{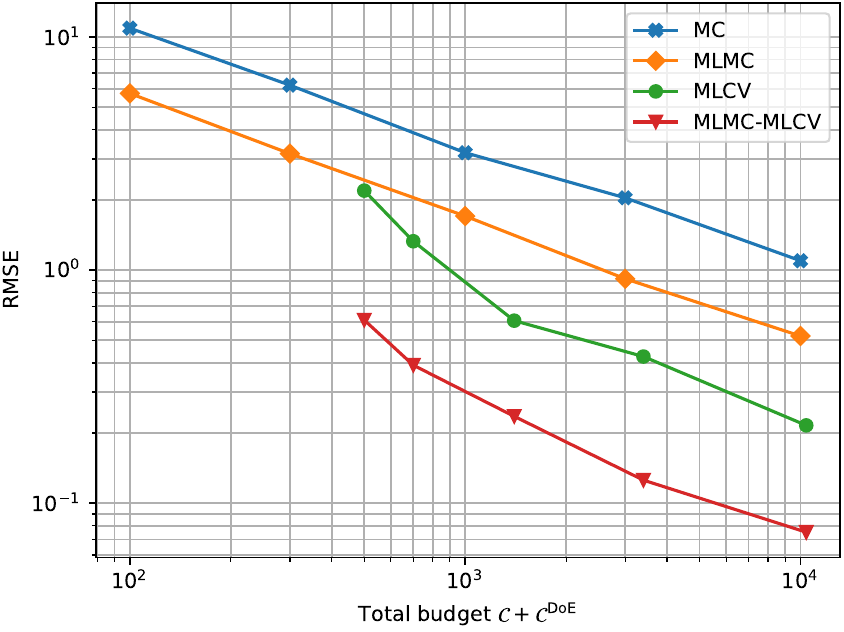}
\caption{The evaluation cost includes the cost of construction of the surrogate models.}
\label{fig:mc_vs_ml_with_offset} 
\end{subfigure}
\caption{RMSE of the MC, MLMC, MLCV and MLMC-MLCV estimators of $\theta=\E[f_3(\vX)]$ (see \cref{eq:heateq_Y_discr}) with respect to the sampling budget $\mathcal{C}\in\{\num{100};\num{300};\num{1000};\num{3000};\num{10000}\}$ of $f_3$ evaluations. 
The surrogates used for the MLCV and MLMC-MLCV estimators are described in \cref{tab:pces,tab:pces_ml}, respectively, and their total construction cost is $\mathcal{C}^{\textnormal{DoE}}=400$.
The RMSEs are estimated using 500 replicates.}
\label{fig:mc_vs_ml}
\end{figure}

\paragraph{MLMC vs.\ MC}
\Cref{fig:mc_vs_ml_without_offset} shows that, for a given sampling budget $\mathcal{C}$, the MC estimator is the least accurate. This can be explained by the fact that it only has access to the finest simulator, $f_3$, whose cost only allows  a limited number of evaluations. 
On the contrary, the MLMC estimator spreads this sampling budget over the four simulators.
Ideally, the optimal sample allocation of MLMC \cref{eq:nl_opt} results in many coarse, cheap evaluations and few fine, expensive evaluations.
This is typically the case when the outputs of the simulators are highly correlated and the associated computational cost grows exponentially.
\Cref{tab:mlmc_pearson} shows that the first assumption holds.
The second assumption also holds since $\mathcal{C}_\ell = \mathcal{O}(K N_\ell) = \mathcal{O}(N_\ell)$, $K$ being fixed, i.e., the evaluation cost grows linearly with the number of quadrature nodes.
As a consequence, the MLMC estimator has a lower RMSE than the standard MC estimator.

\begin{table}[!ht]
\caption{Pearson correlation coefficients between the finest output $Y_3=f_3(\vX)$ and the corresponding control variates based on the PC models of \cref{tab:pces} used for the MLCV method, estimated with a sample of size $\num{1000}$.}
\label{tab:mlcv_pearson}
\renewcommand{\arraystretch}{1.2}
\centering
\begin{tabular}[t]{@{\;}c|ccccc@{\;}}
& $Y_3$ & $g_0(\vX)$ & $g_1(\vX)$ & $g_2(\vX)$ & $g_3(\vX)$ \\
\hline
$Y_3$      & 1.00 & 0.96 & 0.97 & 0.93 & 0.80\\
$g_0(\vX)$ & 0.96 & 1.00 & 0.96 & 0.92 & 0.80\\
$g_1(\vX)$ & 0.97 & 0.96 & 1.00 & 0.95 & 0.81\\
$g_2(\vX)$ & 0.93 & 0.92 & 0.95 & 1.00 & 0.80\\
$g_3(\vX)$ & 0.80 & 0.96 & 0.81 & 0.80 & 1.00\\
\hline
\end{tabular}%
\end{table}

\paragraph{MLCV vs.\ MLMC}
For this experiment, the MLCV estimator is more accurate than the MLMC estimator. 
Based on one control variate per level $\ell$, based on the PC model $g_\ell$ of $f_\ell$ from \cref{tab:pces}, this estimator dedicates all the sampling budget to the finest simulator $f_3$, and uses these control variates based on $g_0$, $g_1$, $g_2$ and $g_3$ to reduce the variance of the MC estimator at no extra cost, as the evaluation cost of a PC model $g_\ell$ is negligible compared to the evaluation cost of $f_3$. 
This MLCV technique works particularly well in this case because the control variates are highly correlated to $f_3$. 
Indeed, \cref{tab:mlcv_pearson} shows that their Pearson coefficients are at least 0.8, which guarantees a theoretical reduction of at least 94\% in the variance of the MC estimator (i.e.\ a reduction of at least about 76\% in standard deviation), corresponding to the variance reduction when using a single control variate based on $g_3$.
In fact, the variance reduction factor $R^2$ when using all the surrogates is only slightly higher, namely $R^2 \approx 95\%$ corresponding to a standard reduction factor of around 78\%, which is reflected in \cref{fig:mc_vs_ml_without_offset}.

\begin{table}[!ht]
\caption{Pearson correlation coefficients between the outputs $Y_\ell = f_\ell(\vX)$ of the simulators and $g_\ell(\vX)$ of the PC models, as well as the successive differences $Y_\ell - Y_{\ell-1}$ and the outputs $h_\ell(\vX)$ of the associated PC models, for the MLMC-based estimators. The coefficients are estimated from a sample of size $\num{1000}$.}
\label{tab:mlmc_pearson}
\renewcommand{\arraystretch}{1.2}
\centering
\resizebox{\linewidth}{!}{%
\begin{tabular}[t]{c|cccccccccccccc}
& $Y_0$ & $Y_1$ & $Y_2$ & $Y_3$ & $Y_1-Y_0$ & $Y_2-Y_1$ & $Y_3-Y_2$ & $g_0(\vX)$ & $g_1(\vX)$ & $g_2(\vX)$ & $g_3(\vX)$ & $h_1(\vX)$ & $h_2(\vX)$ & $h_3(\vX)$ \\
\hline
 $Y_0$ & \graycell 1.00 & \graycell 0.97 & \graycell 0.97 & \graycell 0.97 & -0.11 & -0.04 & -0.04 & \graycell 0.99 & \graycell 0.96 & \graycell 0.93 & \graycell 0.47 & -0.12 & -0.13 & -0.10 \\
 $Y_1$ & \graycell 0.97 & \graycell 1.00 & \graycell 1.00 & \graycell 1.00 & 0.13 & 0.19 & 0.19 & \graycell 0.97 & \graycell 0.97 & \graycell 0.93 & \graycell 0.48 & 0.12 & 0.09 & 0.09 \\
 $Y_2$ & \graycell 0.97 & \graycell 1.00 & \graycell 1.00 & \graycell 1.00 & 0.14 & 0.20 & 0.20 & \graycell 0.96 & \graycell 0.97 & \graycell 0.92 & \graycell 0.48 & 0.13 & 0.10 & 0.10 \\
 $Y_3$ & \graycell 0.97 & \graycell 1.00 & \graycell 1.00 & \graycell 1.00 & 0.14 & 0.20 & 0.20 & \graycell 0.96 & \graycell 0.97 & \graycell 0.92 & \graycell 0.48 & 0.13 & 0.10 & 0.10 \\
 
 $Y_1-Y_0$ & -0.11 & 0.13 & 0.14 & 0.14 & \graycell 1.00 & \graycell 0.96 & \graycell 0.96 & -0.10 & 0.07 & 0.01 & 0.03 & \graycell 0.99 & \graycell 0.90 & \graycell 0.80 \\
 $Y_2-Y_1$ & -0.04 & 0.19 & 0.20 & 0.20 & \graycell 0.96 & \graycell 1.00 & \graycell 1.00 & -0.03 & 0.12 & 0.06 & 0.06 & \graycell 0.94 & \graycell 0.90 & \graycell 0.81 \\
 $Y_3-Y_2$ & -0.04 & 0.19 & 0.20 & 0.20 & \graycell 0.96 & \graycell 1.00 & \graycell 1.00 & -0.03 & 0.12 & 0.06 & 0.06 & \graycell 0.95 & \graycell 0.90 & \graycell 0.81 \\

 $g_0(\vX)$ & \graycell 0.99 & \graycell 0.97 & \graycell 0.96 & \graycell 0.96 & -0.10 & -0.03 & -0.03 & \graycell 1.00 & \graycell 0.96 & \graycell 0.93 & \graycell 0.48 & -0.11 & -0.12 & -0.09 \\
 $g_1(\vX)$ & \graycell 0.96 & \graycell 0.97 & \graycell 0.97 & \graycell 0.97 & 0.07 & 0.12 & 0.12 & \graycell 0.96 & \graycell 1.00 & \graycell 0.94 & \graycell 0.47 & 0.06 & 0.04 & 0.04 \\
 $g_2(\vX)$ & \graycell 0.93 & \graycell 0.93 & \graycell 0.92 & \graycell 0.92 & 0.01 & 0.06 & 0.06 & \graycell 0.93 & \graycell 0.94 & \graycell 1.00 & \graycell 0.47 & -0.00 & -0.03 & -0.02 \\
 $g_3(\vX)$ & \graycell 0.47 & \graycell 0.48 & \graycell 0.48 & \graycell 0.48 & 0.03 & 0.06 & 0.06 & \graycell 0.48 & \graycell 0.47 & \graycell 0.47 & \graycell 1.00 & 0.03 & 0.01 & 0.02 \\

 $h_1(\vX)$ & -0.12 & 0.12 & 0.13 & 0.13 & \graycell 0.99 & \graycell 0.94 & \graycell 0.95 & -0.11 & 0.06 & -0.00 & 0.03 & \graycell 1.00 & \graycell 0.91 & \graycell 0.80 \\
 $h_2(\vX)$ & -0.13 & 0.09 & 0.10 & 0.10 & \graycell 0.90 & \graycell 0.90 & \graycell 0.90 & -0.12 & 0.04 & -0.03 & 0.01 & \graycell 0.91 & \graycell 1.00 & \graycell 0.80 \\
 $h_3(\vX)$ & -0.10 & 0.09 & 0.10 & 0.10 & \graycell 0.80 & \graycell 0.81 & \graycell 0.81 & -0.09 & 0.04 & -0.02 & 0.02 & \graycell 0.80 & \graycell 0.80 & \graycell 1.00 \\
\hline
\end{tabular}%
}
\end{table}

\paragraph{MLMC-MLCV vs.\ MLCV}
Combining the MLMC and MLCV techniques allows the resulting MLMC-MLCV estimator to reduce the variance even more significantly.
This can be explained by the very high correlation between $Y_0$, $Y_1$, $Y_2$ and $Y_3$ on the one hand, which ensures the good performance of the MLMC approach, and by the strong correlation between the control variates on the other hand, ensuring their good performance in combination with the MLMC technique.
In particular, \cref{tab:mlmc_pearson} shows that $g_0(\vX)$, $g_1(\vX)$ and $g_2(\vX)$ are highly correlated with $Y_3$, with Pearson correlation  coefficients greater than 0.9, while $g_3(\vX)$ is poorly correlated with $Y_3$, with a correlation coefficient of 0.48.
Besides, $h_1(\vX)$, $h_2(\vX)$ and $h_3(\vX)$ are well-correlated with $Y_1-Y_0$, $Y_2-Y_1$ and $Y_3-Y_2$, respectively, with correlation coefficients greater than 0.8.

\paragraph{A priori estimation of the variance reduction factor and sample allocation}
Further insights regarding the expected variance reduction of the MLMC-* estimators can be drawn from \cref{tab:variance_reduction2}, which reports the variance reduction factor $R^2_\ell$ w.r.t.\ pure MLMC on each level defined in \cref{eq:var_MLMC_MCV,eq:R2_MLMC-MLCV}, as well as the quantity $\mathcal{S}_\ell^2$ defined in \cref{eq:nl_opt}.
In addition, the variance of the MLMC-* estimators of the expectation with optimal sample allocation is $S_L^2/\mathcal{C}$ (see \cref{eq:var_bound_nl_opt_SL2}), so that the variance reduction factor of an MLMC-* estimator compared to the high-fidelity MC estimator is $1-\mathcal{S}_L^2 / (\mathcal{C}_L \V[Y_L])$, which is reported in the last rows and last column of the table.
Consequently, the variance of the different MLMC-based estimators per unit cost can be compared directly through the ratio of their respective $S_L^2$.
For instance, we observe that the variance of the MLMC-MLCV estimator is significantly reduced compared to that of the MLMC estimator, by about $1- 40.04/2797.65 \approx98.6\%$, resulting in a reduction in standard deviation of about 88\%.
Again, this is well reflected in \cref{fig:mc_vs_ml_without_offset}. 
\Cref{tab:variance_reduction2} also reports the share of computational work across correction levels, namely $n_\ell^* (\mathcal{C}_\ell + \mathcal{C}_{\ell-1}) / \mathcal{C}$, with $n_\ell^*$ as defined in \cref{eq:nl_opt}, for the correction on level $\ell$.
These anticipated shares are consistent with those obtained by using \cref{algoMLCV}, reported in \cref{fig:allocation_cost}.
It is important to note that the 
bottom rows of
\cref{tab:variance_reduction2} are deduced from estimates of $\V[Y_\ell]$, $\mathcal{V}_\ell$ and $R^2_\ell$, reported in the top rows. 
This means that, in practice, these estimates can be obtained from a few evaluations of $f_0,\ldots,f_L$ (and of the appropriate surrogate models), to 
anticipate
the expected variance reduction factor of the MLMC-* estimators and apply \cref{algoMLCV} to the most promising.

\begin{remark}
\Cref{tab:variance_reduction2} highlights the fast decay of $\mathcal{V}_{\ell}$ with $\ell$, which is a favorable scenario for MLMC.
In fact, \cref{tab:variance_reduction2} reports that the variance of the plain MLMC estimator is reduced by about 75\% compared to that of the crude, high-fidelity MC estimator, i.e., a reduction by a factor 4.
Equivalently, this means that a 4-fold increase of the computational budget is needed to achieve the same variance with the high-fidelity MC estimator as that of the plain MLMC.
This favorable setting for MLMC makes it all the more challenging to futher reduce the variance.
Yet, we observe that the proposed MLMC-* estimators are able to achieve a much smaller variance, especially by reducing the MLMC variance on the coarsest levels.
\end{remark}

\begin{table}[!ht]\centering
\caption{Relevant quantities for the MLMC-based estimators, with $n_\ell^*$ and $\mathcal{S}^2_\ell$ as defined in \cref{eq:nl_opt}, $\mathcal{C}_\ell$ as reported in \cref{QuadratureLevels}, and where $\mathcal{V}_\ell$ and $R^2_\ell$ have been estimated using an independent sample of size \num{10000}.
The bold percentages reported in the last rows and last column correspond to the variance reduction factor of the MLMC-* estimators compared to the high-fidelity MC estimator.}
\label{tab:variance_reduction2}%
\begin{tabular}{clcccc}
    \toprule
    \multicolumn{2}{c}{$\ell$}  & 0 & 1 & 2 & 3 \\
    \midrule
    \multicolumn{2}{c}{$\V[Y_\ell]$}
    & \num{1.0850e+04} & \num{1.1153e+04} & \num{1.1205e+04} & \num{1.1218e+04} \\
    \multicolumn{2}{c}{$\mathcal{V}_\ell = \V[Y_\ell - Y_{\ell-1}]$}
    & \num{1.0850e+04} & \num{5.9029e+02} & \num{1.0590e+00} & \num{5.8160e-02} \\
    \midrule
    \multirow{5}{*}{$R^2_{\ell}$}
    & MLMC & 0 & 0 & 0 & 0 \\
    & MLMC-CV & 0.9838 & 0.9916 & 0.8224 & 0.6469 \\
    & MLMC-MLCV & 0.9840 & 0.9920 & 0.9173 & 0.9202 \\
    & MLMC-CV[0] & 0.9838 & 0 & 0 & 0 \\
    & MLMC-MLCV[0] & 0.9840 & 0.9916 & 0.8992 & 0.9027 \\
    \midrule\midrule
    \multirow{5}{*}{$n_\ell^* \dfrac{\mathcal{C}_\ell + \mathcal{C}_{\ell-1}}{\mathcal{C}}$}
    & MLMC & \qty{69.63}{\percent} & \qty{28.13}{\percent} & \qty{1.68}{\percent} & \qty{0.56}{\percent} \\
    & MLMC-CV & \qty{71.00}{\percent} & \qty{20.66}{\percent} & \qty{5.69}{\percent} & \qty{2.66}{\percent} \\
    & MLMC-MLCV & \qty{73.66}{\percent} & \qty{20.97}{\percent} & \qty{4.05}{\percent} & \qty{1.32}{\percent} \\
    & MLMC-CV[0] & \qty{22.59}{\percent} & \qty{71.69}{\percent} & \qty{4.29}{\percent} & \qty{1.42}{\percent} \\
    & MLMC-MLCV[0] & \qty{72.84}{\percent} & \qty{21.30}{\percent} & \qty{4.42}{\percent} & \qty{1.44}{\percent} \\
    \midrule
    \multirow{5}{*}{$\mathcal{S}_\ell^2$}
    & MLMC & 1356.31 & 2673.54 & 2766.49 & 2797.65 \\
    & MLMC-CV & 21.99 & 36.64 & 41.33 & 43.62 \\
    & MLMC-MLCV & 21.72 & 35.85 & 38.99 & 40.04 \\
    & MLMC-CV[0] & 21.99 & 382.87 & 418.54 & 430.71 \\
    & MLMC-MLCV[0] & 21.76 & 36.35 & 39.84 & 41.01 \\
    \midrule
    \multirow{5}{*}{$1-\dfrac{\mathcal{S}_\ell^2}{\mathcal{C}_\ell\V[Y_\ell]}$}
     & MLMC & \qty{0.00}{\percent} & \qty{4.11}{\percent} & \qty{50.62}{\percent} & \textbf{\qty{75.06}{\bpercent}} \\
    & MLMC-CV & \qty{98.38}{\percent} & \qty{98.69}{\percent} & \qty{99.26}{\percent} & \textbf{\qty{99.61}{\bpercent}} \\
    & MLMC-MLCV & \qty{98.40}{\percent} & \qty{98.71}{\percent} & \qty{99.30}{\percent} & \textbf{\qty{99.64}{\bpercent}} \\
    & MLMC-CV[0] & \qty{98.38}{\percent} & \qty{86.27}{\percent} & \qty{92.53}{\percent} & \textbf{\qty{96.16}{\bpercent}} \\
    & MLMC-MLCV[0] & \qty{98.40}{\percent} & \qty{98.70}{\percent} & \qty{99.29}{\percent} & \textbf{\qty{99.63}{\bpercent}} \\
    \bottomrule
\end{tabular}%
\end{table}

\paragraph{Accounting for the construction cost of the surrogate model}
These first results from \cref{fig:mc_vs_ml_without_offset} highlight the interest of multilevel control variates, be it with the MLMC-MLCV estimator or simply with the MLCV one. 
These results suppose that the PC models are not built specifically for the study, so that the budget does not include the number of $f_3$-equivalent simulations required for their construction.
\Cref{fig:mc_vs_ml_with_offset} illustrates the alternative case where the cost of the surrogate based CV estimators includes the cost of constructing the surrogates.
As a result, an additional budget of $\mathcal{C}^{\textnormal{DoE}}=400$ (see \cref{surrModel}), is allocated to the construction of the PC models.
As was the case for the single-level PC-based CV estimators, this results in an offset of 400 in the total cost of the MLCV and MLMC-MLCV estimators.
The effect is especially noticeable when the construction budget larger than the estimation budget, i.e.\ for $\mathcal{C} \in \{100; 300\}$.
For a sampling cost of 100, i.e.\ a total evaluation cost of 500, the MLCV estimator is still slightly more accurate than the MLMC estimator, while the MLMC-MLCV estimator still largely outperforms both.

\subsubsection{Variants of MLMC-based CV estimators}\label{res:mlmcmcv_vs_mlmf}

The discussion about the surrogate construction budget prompts us to investigate variants of MLMC-MLCV using fewer surrogates, in order to reduce the total evaluation cost for limited budgets.
The MLMC-CV estimator is described in \cref{sec:MLMF}, while the variants MLMC-CV[0] and MLMC-MLCV[0] of MLMC-CV and MLMC-MLCV are introduced in \cref{tabsummary_methods}.
The MLMC-CV[0] estimator only uses a control variate based on $g_0$, so that the construction cost drops to $\mathcal{C}^{\textnormal{DoE}}=100$, while the MLMC-MLCV[0] estimator uses control variates based on $g_0$, $g_1$ and $h_1$, so that the construction cost drops to $\mathcal{C}^{\textnormal{DoE}}=200$.
The MLMC-CV estimator uses control variates based on surrogates at all levels, so that the construction cost remains $\mathcal{C}^{\textnormal{DoE}}=400$.

\begin{figure}[!ht]
\centering
\begin{subfigure}[t]{0.49\textwidth}
\centering
\includegraphics[width=\textwidth]{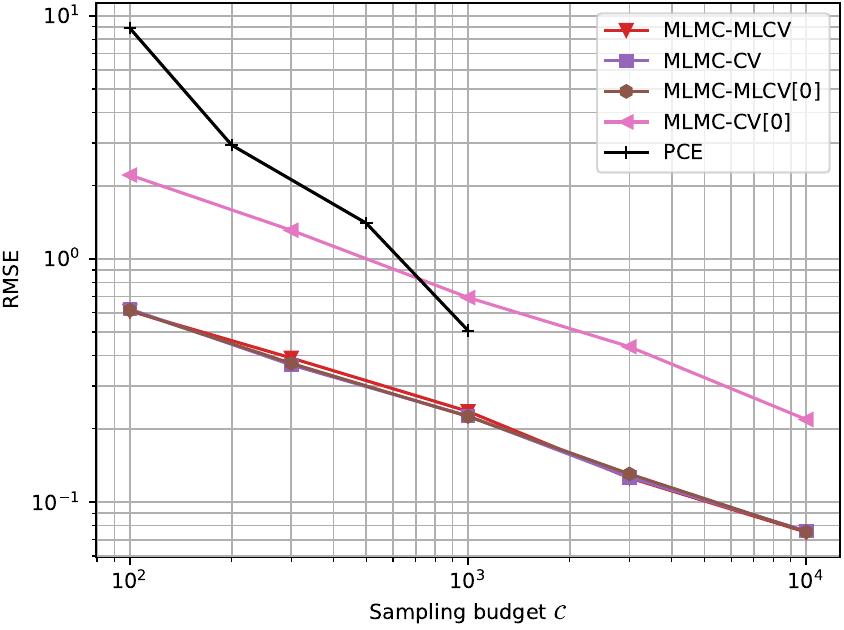}
\caption{The evaluation cost does not include the cost of construction of the surrogate models.}
\label{fig:cv_mlmc_1}
\end{subfigure}\hfill%
\begin{subfigure}[t]{0.49\textwidth}
\centering
\includegraphics[width=\textwidth]{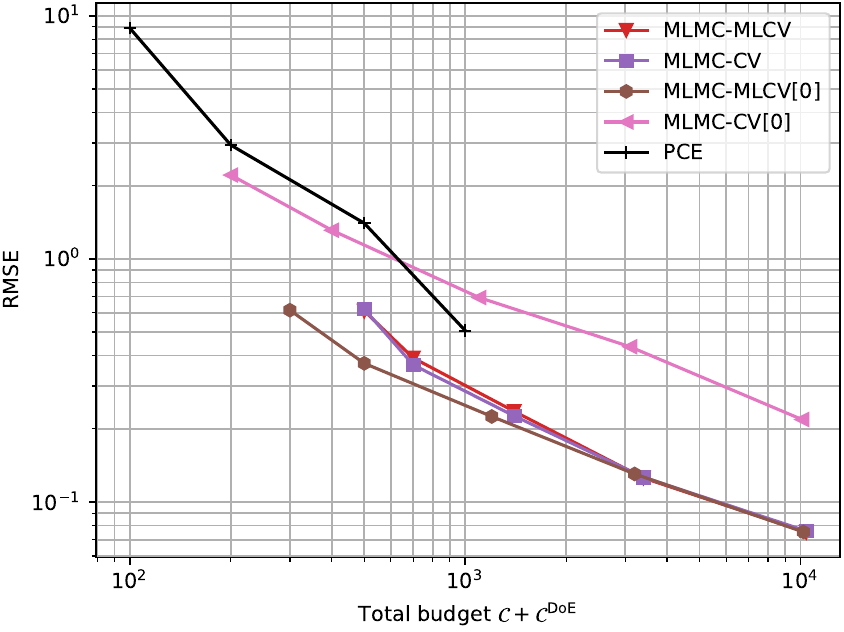}
\caption{The evaluation cost includes the cost of construction of the surrogate models.}
\label{fig:cv_mlmc_2} 
\end{subfigure}
\caption{RMSE of the MLMC-MLCV, MLMC-CV, MLMC-MLCV[0], and MLMC-CV[0] estimators of $\theta=\E[f_3(\vX)]$ (see \cref{eq:heateq_Y_discr}) with respect to the sampling budget $\mathcal{C}\in\{\num{100};\num{300};\num{1000};\num{3000};\num{10000}\}$ of $f_3$ evaluations. 
The surrogates used for the MLMC-based estimators are described in \cref{tab:pces_ml}.
The RMSEs of the pure PC estimators, summarized in \cref{tab:pure_pces} are shown in black for comparison.
The RMSEs for the MLMC-* estimators are computed using 500 replicates, while the RMSEs of the PC-only estimators are computed from 200 replicates.}
\label{fig:cv_mlmc}
\end{figure}

\Cref{fig:cv_mlmc_1} shows that MLMC-CV[0] has much higher RMSE than the other variants, resulting from the fact that it only reduces the variance associated with the coarsest level of the MLMC estimator.
This behavior is consistent with the quantities of \cref{tab:variance_reduction2}.
In particular, the value of $\mathcal{S}_L^2$ is about 10 times higher than for the other variants, accounting for its RMSE being about 3 times higher than for the other variants.
The remaining variants have similar performances regardless of the estimation budget, which is consistent with the $S_L^2$ values given in \cref{tab:variance_reduction2}. 
On the other hand, when considering the construction cost of the surrogates, \cref{fig:cv_mlmc_2} shows that MLMC-MLCV[0] performs best, as it uses only surrogate models related to the two coarsest levels, namely $g_0$, $g_1$ and $h_1$, so that the construction cost is reduced. 
Furthermore, these surrogates have excellent $Q^2$, and they are such that $g_0(\vX)$ is highly correlated with $Y_0$, $g_1(\vX)$ is highly correlated with $Y_1$, and $h_1(\vX)$ is highly correlated with $Y_1-Y_0$, $Y_2-Y_1$ and $Y_3-Y_2$.
Namely, the associated Pearson correlation coefficients reported in \cref{tab:mlmc_pearson} are all at least 0.94.
Therefore, should one have to build the surrogates specifically for the CV estimation of a statistic, it is more advantageous to adopt the MLMC-MLCV[0] variant over the others. 
In our case, the construction budget is divided by two compared to MLMC-MLCV and MLMC-CV, for a similar performance in terms of RMSE.
For completeness, we also compare the proposed estimators with plain PC models of the high-fidelity simulator $f_3$, whose characteristics are summarized in \cref{tab:pure_pces}. 
The black curve in \cref{fig:cv_mlmc} shows that, for the same budget, combining MLMC sampling and multifidelity surrogate models is more advantageous than using a surrogate model of the highest fidelity model alone. 
By extrapolating, the latter should eventually become better than the MLMC-* estimators around a budget of \num{3000}, which is much higher than practically affordable for expensive simulators. 
Note also that the PC models used here for comparison are optimized using the entire budget, while the MLMC-* estimators use PC models that are constructed from an arbitrary and non-optimized distribution of the sampling budget between the different model levels as reported in \cref{tab:pces_ml}. 
Strategies for optimizing this tradeoff, which constitutes a promising avenue for further improvement, will be investigated in future work.

\begin{table}[!htbp]
\caption{PC models of the high-fidelity simulator $f_L$ used directly for the estimation of $\E[\mathcal{M}(\vX)]$, with their sample size, degree and quality measure. These models are built from an optimized LHS DoE, using the basis-adaptive LARS algorithm of~\cite[Fig.~5]{Blatman2011_AdaptiveSparsePolynomial}.
The average values of $p^*$, $|\tilde{\mathcal{A}}_{p^*}|$ and $Q^2$ over 200 replications is reported, along with their respective standard deviation between brackets.}
\label{tab:pure_pces}
\centering
\begin{tabular}[c]{ccccc}
\toprule
$n_L^{\textnormal{DoE}}$ & 100 & 200 & 500 & 1000 \\
\midrule
$p^*$ & \num{4(2)} & \num{6(1)} & \num{9(1)} & \num{11(1)}  \\
$|\tilde{\mathcal{A}}_{p^*}|$ & \num{20(13)} & \num{48(16)} & \num{118(31)} & \num{219(56)}  \\
$Q^2$ & \num{0.35(0.21)} & \num{0.84(0.02)} & \num{0.95(0.01)} & \num{0.98(0.00)}  \\
\bottomrule
\end{tabular}
\end{table}

\subsubsection{Budget allocation}\label{res:budget_alloc}

Lastly, \cref{fig:allocation_calls} shows the number $n_\ell$ of evaluations for each of the $L+1$ correction levels of the MLMC-* telescopic sum. 
Precisely, $n_0$ is the number of evaluations of $f_0$, while $n_\ell$ is the number of evaluations of $f_{\ell}-f_{\ell-1}$, for $\ell >0$. 
We observe a typical sample allocation for MLMC-like estimators in ideal cases, that is, many coarse evaluations, and fewer and fewer fine evaluations.
The MLMC-CV[0] estimator slightly deviates from this pattern, with $n_1 \approx n_0$, which can be explained by the fact that $\mathcal{V}_1 \approx 590$ is of the same order of magnitude as $(1-R_0^2) \mathcal{V}_0 \approx 176$.
\Cref{fig:allocation_cost} depicts the share of overall sampling cost associated with the different correction levels.
Specifically, $n_0\mathcal{C}_0\mathcal{C}^{-1}$ is the share of sampling budget dedicated to evaluating $f_0$, and $n_\ell(\mathcal{C}_\ell + \mathcal{C}_{\ell-1})\mathcal{C}^{-1}$ is the share dedicated to evaluating $f_\ell - f_{\ell-1}$, for $\ell>0$. 
We see that, except for MLMC-CV[0] for the reasons explained above, most of the sampling budget (around 70\%) is allocated to the coarsest level, and most of the remaining budget is dedicated to correction level $\ell=1$.
We note that the CV-based MLMC estimators dedicate slightly more budget to levels 2 and 3 than for the standard MLMC estimator.
The sampling budget shares of \cref{fig:allocation_cost} are consistent with the theoretical optimal shared reported in \cref{tab:variance_reduction2}, including those of the MLMC-CV[0] estimator.
This suggests that the sample allocation resulting from \cref{algoMLCV} seems to converge to the theoretical optimal sample allocation \cref{eq:nl_opt}.

\begin{figure}[!ht]
\begin{subfigure}[t]{0.49\textwidth}
\centering
\includegraphics[width=\textwidth]{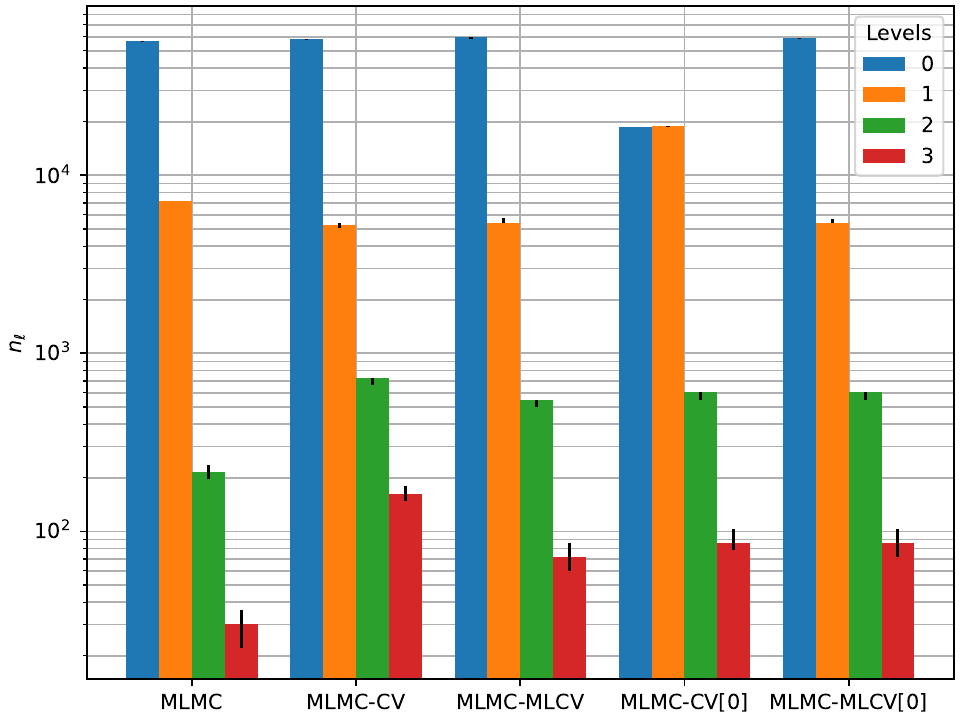}
\caption{Sample allocation. The bars represent the median sample size $n_{\ell}$ associated with the corresponding correction level $\ell$ of the MLMC-* telescopic sum. The black vertical lines represent the 25\% and 75\% quantiles.}
\label{fig:allocation_calls}
\end{subfigure}\hfill%
\begin{subfigure}[t]{0.49\textwidth}
\centering
\includegraphics[width=\textwidth]{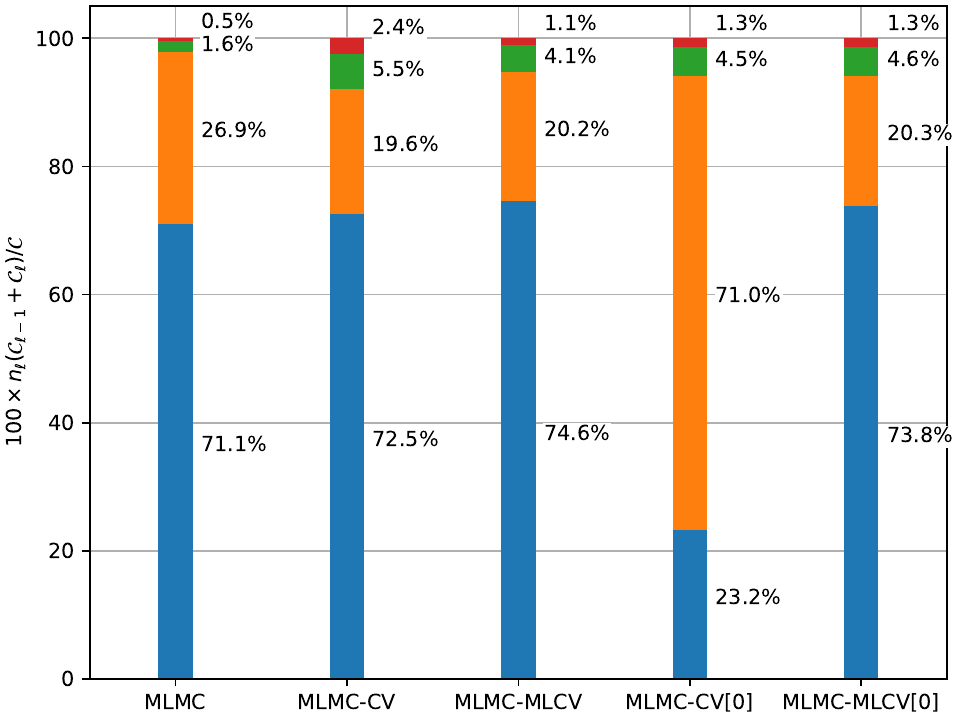}
\caption{Budget allocation. 
The stacked bars represent the proportion of the sampling cost dedicated to each level of the MLMC-* telescopic sum.}
\label{fig:allocation_cost} 
\end{subfigure}
\caption{Sample allocation and associated computational cost across the correction levels for the MLMC-based estimators of $\theta=\E[f_3(\vX)]$ (see \cref{eq:heateq_Y_discr}), with an estimation budget of $\mathcal{C}=\num{10000}$.}
\end{figure}

\section{Conclusions}\label{conclusion}
In this paper, we proposed multilevel variance reduction strategies relying on surrogate-based control variates.
On the one hand, using specific surrogate models, such as polynomial chaos expansions or Taylor polynomial expansions, allows to directly access exact statistics (mean, variance) of the control variates.
Even if these exact statistics are not directly accessible (e.g., when using GPs), they can be estimated very accurately at negligible cost.
This contrasts with typical control variates relying on lower-fidelity models based on models/simulators with degraded physics or coarser discretizations, for which approximate control variate strategies need to be devised, resulting in a lower variance reduction.
On the other hand, when multiple levels of fidelities (e.g., based on the discretization) with a clear cost/accuracy hierarchy are available, the surrogate-based control variate approach can be efficiently combined with multilevel strategies.
The first strategy, MLCV, simply consists of using multiple control variates based on surrogate models of the simulators corresponding to the different levels.
The main advantage is that the surrogate models corresponding to coarse levels may be constructed using larger sample sizes than for the finest level, resulting in a more accurate surrogate model (i.e., with lower model error).
This strategy thus leads to a greater variance reduction compared to only using one surrogate model based on the finest level.
This is supported by the numerical experiments we conducted, as well as by the theoretical variance reduction provided by \cref{prop:add_cv}.
The second strategy, MLMC-MLCV, allows to further improve the variance reduction by combining the surrogate-based control variates with an MLMC strategy.
The most appropriate way to construct and utilize the surrogate models is, however, not straightforward, and was discussed in detail in \cref{sec:MLMF,sec:MLMC-MCV}.
The additional variance reduction as compared to plain MLMC is demonstrated in our numerical experiments and supported by the theoretical variance reduction factor \cref{eq:R2_MLMC-MLCV} derived in \cref{sec:MLMC-MCV}.

The construction cost of the surrogate models was discussed from two perspectives.
When the surrogate models are constructed for the sole purpose of serving for the control variate estimation, then the cost of their construction must be taken into account for fair comparison with other approaches.
In such a case, it may not be optimal, in terms of cost/accuracy tradeoff, to construct surrogate models on all levels, especially when only a limited budget is available.
In particular, using a subset of surrogate models based on the coarser levels may already lead to considerable variance reduction, provided that the outputs of the coarse surrogate models are sufficiently correlated with that of the high-fidelity simulator.
Then, considering additional surrogate models on finer levels might only result in marginal improvement, at the expense of a significant computational cost.
On the contrary, if the surrogate models have already been constructed for other purposes, and are, in some sense, available ``for free,'' then their construction cost need not be considered, and the entire set of surrogate models may then be used.

From the former perspective, that is when the cost of the surrogate construction is considered as part of the estimation cost, one may devise more involved strategies seeking to optimize the tradeoff between the construction cost and the model error, directly impacting the projected variance reduction.
For instance, in the context of polynomial chaos surrogate models, the truncation strategy may be controlled to this end, as proposed in a stochastic Galerkin framework in~\cite{Yang2022_ControlVariatePolynomial}, where the total polynomial degree is optimized alongside the sample size and the CV parameter to minimize the PC-based CV estimator's variance under a cost constraint.
Another avenue to improve the proposed approach would be to replace the MLMC part of the MLMC-MLCV strategy by a more efficient multilevel approach, such as the multilevel best linear unbiased estimator (MLBLUE)~\cite{Schaden2020_MultilevelBestLinear, Schaden2021_AsymptoticAnalysisMultilevel, Schaden2021_thesis}.
In particular, an MLBLUE-MLCV strategy should be more efficient when a collection of low-fidelity simulators (e.g., with degraded physics) with no clear cost/accuracy hierarchy is available.
Finally, although the proposed approaches apply to the estimation of arbitrary statistics, they were only tested here on the estimation of expected values.
The theoretical and algorithmic ingredients for the estimation of variances are, however, described in this paper and may be tested in follow-up investigations and numerical experiments.
Specifically, the multifidelity estimation of variance-based sensitivity indices is of particular interest to our team.

\section{Acknowledgment}
We wish to acknowledge the PIA framework
(CGI, ANR) and the industrial members of the IRT
Saint Exupéry project R-Evol: Airbus, Liebherr,
Altran Technologies, Capgemini DEMS France,
CENAERO and Cerfacs for their support,
financial funding and own knowledge.



\clearpage
\newpage
\appendix
\section{Optimal parameter for expectation and variance CV estimators}
\label{app:CVmeanvar}
We derive here the expression of the optimal CV parameter $\boldsymbol{\alpha}^*$ for the CV estimators of the expectation and of the variance.
We define $Y=f(\vX)$, $Z_m=g_m(\vX)$ and $\vZ=(Z_m)_{m=1}^M$.
Then, given an input $n$-sample $\{\vX^{(i)}\}_{i=1}^n$, we define $Y^{(i)}=f(\vX^{(i)})$, $Z_m^{(i)}=g_m(\vX^{(i)})$ and $\vZ^{(i)}=(Z_m^{(i)})_{m=1}^M$.

\subsection{CV estimator of the expectation}
\label{app:CVmeanvar-mean}
For the expectation, we have $\boldsymbol{\Sigma}=\C[\hat{E}[\vZ]]$ and $\mathbf{c} = \C[\hat{E}[Y], \hat{E}[\vZ]]$, i.e.
\begin{align}
    [\boldsymbol{\Sigma}]_{m,m'} 
    & =
    \C[\hat{E}[Z_m], \hat{E}[Z_{m'}]]
    =
    n^{-2} \sum_{i,j =1}^n\C[Z_m^{(i)},Z_{m'}^{(j)}]
    = 
    n^{-1} \C[Z_m,Z_{m'}],\\[.5em]
    [\mathbf{c}]_m 
    & =
    \C[\hat{E}[Y], \hat{E}[Z_m]]
    =
    n^{-2} \sum_{i,j =1}^n\C[Y^{(i)},Z_m^{(j)}]
    =
    n^{-1} \C[Y,Z_m],
\end{align}
so that $\boldsymbol{\Sigma} = n^{-1}\C[\vZ]$, $\mathbf{c} = n^{-1}\C[Y,\vZ]$, and, eventually,
\begin{equation}
    \boldsymbol{\alpha}^*
    =
    \C[\mathbf{Z}]^{-1} \C[Y,\mathbf{Z}].
\end{equation}
Furthermore, we have 
$\V[\hat{E}[Y]] = n^{-1}\V[Y]$, so that
\begin{equation}
    R^2 
    =
    \dfrac{\C[Y, \vZ]^\intercal \C[\vZ]^{-1} \C[Y, \vZ]}{\V[Y]}
    =
    \vec{r}_{Y,\vZ}^\intercal \mat{R}_{\vZ}^{-1} \vec{r}_{Y,\vZ},
\end{equation}
where
\begin{equation}
    \vec{r}_{Y,\vZ} = (\V[Y]\mat{D}_{\vZ})^{-1/2} \C[Y,\vZ], \quad
    \mat{R}_{\vZ} = \mat{D}_{\vZ}^{-1/2} \C[\vZ] \mat{D}_{\vZ}^{-1/2}, \quad
    \mat{D}_{\vZ} = \Diag(\C[\vZ]).
\end{equation}
Thus, $R^2 \in [0,1]$ corresponds to the squared coefficient of multiple correlation between $Y$ and the control variates $Z_1, \ldots, Z_M$.

\subsection{CV estimator of the variance}
\label{app:CVmeanvar-var}
Similarly, for the variance, we have $\boldsymbol{\Sigma}=\C[\hat{V}[\vZ]]$ and $\mathbf{c} = \C[\hat{V}[Y], \hat{V}[\vZ]]$. We start by deriving useful identities. In what follows, for any random variable $A$, we denote the corresponding centered variable by $\bar{A} := A - \E[A]$.
First, we remark that $\hat{V}[A] = \hat{V}[\bar{A}]$, so that, for any two random variables $Y$ and $Z$,
\begin{equation}
    \C[\hat{V}[Y], \hat{V}[Z]]
    =
    \E[\hat{V}[\bar{Y}] \hat{V}[\bar{Z}]]
    -
    \V[Y]\V[Z].
\end{equation}
Furthermore, it can be shown that
\begin{equation}
    \E[\hat{V}[\bar{Y}] \hat{V}[\bar{Z}]]
    =
    \left(\dfrac{n}{n-1}\right)^2
    \left(
    a_n(\bar{Y}, \bar{Z})
    +
    b_n(\bar{Y}, \bar{Z})
    - 
    c_n(\bar{Y}, \bar{Z})
    -
    c_n(\bar{Z}, \bar{Y})
    \right),
\end{equation}
with (see proof below)
\begin{align}
    a_n(\bar{Y}, \bar{Z})
    & :=
     \E[ \hat{E}[\bar{Y}^2] \hat{E}[\bar{Z}^2] ]
     =
     \dfrac{1}{n} \C[\bar{Y}^2,\bar{Z}^2]
     + \V[Y]\V[Z]
  =  a_n(\bar{Z}, \bar{Y}), \label{eq:cv_var_an}\\
  b_n(\bar{Y}, \bar{Z})
  & :=
  \E[ \hat{E}[\bar{Y}]^2 \hat{E}[\bar{Z}]^2 ]
  =
  \dfrac{a_n(\bar{Y}, \bar{Z})}{n^2} 
  + 2 \dfrac{n - 1}{n^3} \C[Y, Z]^2
  = b_n(\bar{Z}, \bar{Y}), \label{eq:cv_var_bn}\\
  c_n(\bar{Y}, \bar{Z})
  & :=
  \E[ \hat{E}[\bar{Y}^2] \hat{E}[\bar{Z}]^2 ]
  =
  \dfrac{a_n(\bar{Y}, \bar{Z})}{n}
  = c_n(\bar{Z}, \bar{Y}). \label{eq:cv_var_cn}
\end{align}
We thus have
\begin{align}
    \C[\hat{V}[Y], \hat{V}[Z]]
    & =
    \dfrac{1}{n} \C[\bar{Y}^2, \bar{Z}^2] + \dfrac{2}{n(n-1)} \C[Y, Z]^2,\\
    \V[\hat{V}[Y]]
    =
    \C[\hat{V}[Y], \hat{V}[Y]]
    & =
    \dfrac{1}{n} \V[\bar{Y}^2] + \dfrac{2}{n(n-1)} \V[Y]^2,
\end{align}
eventually leading to
\begin{align}
    \boldsymbol{\Sigma}
    & =
    \C[\hat{V}[\vZ]]
    =
    \dfrac{1}{n} \left(
    \C[\bar{\vZ}^{\odot 2}] + \dfrac{2}{n-1} \C[\vZ]^{\odot 2}
    \right),\\[.5em]
    \mathbf{c}
    & =
    \C[\hat{V}[Y], \hat{V}[\vZ]]
    =
    \dfrac{1}{n} \left(
    \C[\bar{Y}^2, \bar{\vZ}^{\odot 2}] + \dfrac{2}{n-1} \C[Y, \vZ]^{\odot 2}
    \right),
\end{align}
so that
\begin{align}
    \boldsymbol{\alpha}^*
    & =
    \left[
    \C[\bar{\vZ}^{\odot 2}] + \dfrac{2}{n-1} \C[\vZ]^{\odot 2}
    \right]^{-1}
    \left[
    \C[\bar{Y}^2, \bar{\vZ}^{\odot 2}] + \dfrac{2}{n-1} \C[Y, \vZ]^{\odot 2}
    \right], \\[1em]
    R^2
    & =
    \dfrac{
    \left[
    \C[\bar{Y}^2, \bar{\vZ}^{\odot 2}] + \dfrac{2}{n-1} \C[Y, \vZ]^{\odot 2}
    \right]^\intercal
    \boldsymbol{\alpha}^*
    }
    {\V[\bar{Y}^2] + \dfrac{2}{n-1} \V[Y]^2}.
\end{align}
As $n \to \infty$, we see that 
\begin{align}
    \boldsymbol{\alpha}^* & \to \C[\bar{\vZ}^{\odot 2}]^{-1}\C[\bar{Y}^2, \bar{\vZ}^{\odot 2}] \\[.5em]
    R^2 &\to \V[\bar{Y}^2]^{-1} \C[\bar{Y}^2, \bar{\vZ}^{\odot 2}]^\intercal\C[\bar{\vZ}^{\odot 2}]^{-1}\C[\bar{Y}^2, \bar{\vZ}^{\odot 2}]
    =
    \vec{r}_{Y,\vZ}^\intercal \mat{R}_{\vZ}^{-1} \vec{r}_{Y,\vZ} =: R^2_{\textnormal{lim}}, 
    \label{eq:correl_coeff_variance}
\end{align}
where
\begin{equation}
    \vec{r}_{Y,\vZ} = (\V[\bar{Y}^2]\mat{D}_{\vZ})^{-1/2} \C[\bar{Y}^2, \bar{\vZ}^{\odot 2}], \quad
    \mat{R}_{\vZ} = \mat{D}_{\vZ}^{-1/2} \C[\bar{\vZ}^{\odot 2}] \mat{D}_{\vZ}^{-1/2},
\end{equation}
with $\mat{D}_{\vZ} = \Diag(\C[\bar{\vZ}^{\odot 2}])$.
Thus, $R^2_{\textnormal{lim}} \in [0,1]$ corresponds to the squared coefficient of multiple correlation between $\bar{Y}^2$ and $\bar{Z}_1^2, \ldots, \bar{Z}_M^2$.

We now proceed to the proof of identities~\cref{eq:cv_var_an,eq:cv_var_bn,eq:cv_var_cn}.
First, for \cref{eq:cv_var_an}, by definition
\begin{equation}
  a_n(\bar{Y}, \bar{Z})
  = \dfrac{1}{n^2} \E[ \textstyle\sum_{i=1}^{n} (\bar{Y}^{(i)})^2 \sum_{i=1}^{n} (\bar{Z}^{(i)})^2 ]
  = \dfrac{1}{n^2} \sum_{i,j=1}^{n} \E[(\bar{Y}^{(i)})^2(\bar{Z}^{(j)})^2].
\end{equation}
We distinguish two (disjoint) cases:
\begin{enumerate}
\item $i = j$:  $\E[(\bar{Y}^{(i)})^2(\bar{Z}^{(j)})^2] = \E[(\bar{Y}^{(i)})^2(\bar{Z}^{(i)})^2] 
  =  \E[\bar{Y}^2\bar{Z}^2] 
  = \C[\bar{Y}^2, \bar{Z}^2] + \V[Y]\V[Z]$.
  There are $n$ such terms in the sum. 
\item $i \ne j$: $\E[(\bar{Y}^{(i)})^2(\bar{Z}^{(j)})^2] 
  = \E[(\bar{Y}^{(i)})^2]\E[(\bar{Z}^{(j)})^2] 
  = \E[\bar{Y}^2]\E[\bar{Z}^2] = \V[Y]\V[Z]$.
  There are $n(n-1)$ such terms in the sum. 
\end{enumerate}
Then \cref{eq:cv_var_an} follows.
For \cref{eq:cv_var_bn},
\begin{equation}
    b_n(\bar{Y}, \bar{Z})
    =
    \E[ ( \textstyle\sum_{i=1}^{n} \bar{Y}^{(i)} )^2 ( \sum_{i=1}^{n} \bar{Z}^{(i)} )^2]
  = \dfrac{1}{n^4} \sum_{i,j,k,\ell=1}^{n} \E[\bar{Y}^{(i)}\bar{Y}^{(j)}\bar{Z}^{(k)}\bar{Z}^{(\ell)}].
\end{equation}
We distinguish five (disjoint) cases:
\begin{enumerate}
\item $i=j=k=\ell$: $\E[\bar{Y}^{(i)}\bar{Y}^{(j)}\bar{Z}^{(k)}\bar{Z}^{(\ell)}] = \E[(\bar{Y}^{(i)})^2(\bar{Z}^{(i)})^2] = \E[\bar{Y}^2\bar{Z}^2] = \C[\bar{Y}^2, \bar{Z}^2] + \V[Y]\V[Z]$. There are $n$ such terms in the sum.
\item $i=j \ne k=\ell$: $\E[\bar{Y}^{(i)}\bar{Y}^{(j)}\bar{Z}^{(k)}\bar{Z}^{(\ell)}] = \E[(\bar{Y}^{(i)})^2]\E[(\bar{Z}^{(k)})^2] =  \E[\bar{Y}^2]\E[\bar{Z}^2] = \V[Y]\V[Z]$. There are $n(n-1)$ such terms in the sum.
\item $i=k \ne j=\ell$: $\E[\bar{Y}^{(i)}\bar{Y}^{(j)}\bar{Z}^{(k)}\bar{Z}^{(\ell)}] = \E[\bar{Y}^{(i)}\bar{Z}^{(i)}] \E[\bar{Y}^{(j)}\bar{Z}^{(j)}] =  \E[\bar{Y} \bar{Z}]^2 = \C[Y, Z]^2$. There are $n(n-1)$ such terms in the sum.
\item $i=\ell \ne j=k$: $\E[\bar{Y}^{(i)}\bar{Y}^{(j)}\bar{Z}^{(k)}\bar{Z}^{(\ell)}] = \E[\bar{Y}^{(i)}\bar{Z}^{(i)}] \E[\bar{Y}^{(j)}\bar{Z}^{(j)}] =  \E[\bar{Y} \bar{Z}]^2 = \C[Y, Z]^2$. There are $n(n-1)$ such terms in the sum.
\item All remaining cases (at least one of the indices $i,j,k,\ell$ is different from all the others): $\E[\bar{Y}^{(i)}\bar{Y}^{(j)}\bar{Z}^{(k)}\bar{Z}^{(\ell)}] = 0$.
\end{enumerate}
Then \cref{eq:cv_var_bn} follows.
Finally, for \cref{eq:cv_var_cn},
\begin{equation}
    c_n(\bar{Y}, \bar{Z})
    \E[  ( \textstyle\sum_{i=1}^{n} (\bar{Y}^{(i)})^2 ) ( \sum_{i=1}^{n} \bar{Z}^{(i)} )^2 ]
    = \dfrac{1}{n^3} \sum_{i,j,k=1}^{n} \E[(\bar{Y}^{(i)})^2\bar{Z}^{(j)}\bar{Z}^{(k)}].
\end{equation}
We distinguish three (disjoint) cases:
\begin{enumerate}
\item $i=j=k$: $\E[(\bar{Y}^{(i)})^2\bar{Z}^{(j)}\bar{Z}^{(k)}] = \E[(\bar{Y}^{(i)})^2(\bar{Z}^{(i)})^2] = \E[\bar{Y}^2\bar{Z}^2] = \M^4[Y, Z]$. There are $n$ such terms in the sum.
\item $i \ne j = k$: $\E[(\bar{Y}^{(i)})^2\bar{Z}^{(j)}\bar{Z}^{(k)}] = \E[(\bar{Y}^{(i)})^2]\E[(\bar{Z}^{(j)})^2] = \E[\bar{Y}^2]\E[\bar{Z}^2] = \V[Y]\V[Z]$. There are $n(n-1)$ such terms in the sum.
\item All remaining cases (at least one of the indices $j,k$ is different from all the others): \\${\E[(\bar{Y}^{(i)})^2\bar{Z}^{(j)}\bar{Z}^{(k)}] = 0}$.
\end{enumerate}
Then \cref{eq:cv_var_cn} follows.

\begin{remark}
When the expected value $\boldsymbol{\mu}_{\vZ}$ of $\vZ$ is known, as is the case when defining $\vZ$ from the prediction of certain surrogate models, such as PC expansion and Taylor polynomials (and, in some instances, GPs, see \cref{sec:GP}), it is possible to replace $\hat{V}[\vZ]$ with $\hat{E}[\bar{\vZ}^{\odot 2}]$. The derivation of the optimal CV parameter is somewhat easier and leads to similar results as in the unknown expectation case.
Specifically,
\begin{align}
  [\boldsymbol{\Sigma}]_{m,m'} 
    &= \C[\hat{E}[\bar{Z}_m^2], \hat{E}[\bar{Z}_{m'}^2]]   
    = \C[n^{-1} \textstyle \sum_{i=1}^n (\bar{Z}_m^{(i)})^2 ,n^{-1} \sum_{i=1}^n (\bar{Z}_{m'}^{(i)})^2]\\[.5em]
    & = n^{-2} \textstyle\sum_{i,j=1}^{n} \C[(\bar{Z}_m^{(i)})^2, (\bar{Z}_{m'}^{(j)})^2]
    = n^{-2} \textstyle\sum_{i=1}^{n} \C[(\bar{Z}_m^{(i)})^2, (\bar{Z}_{m'}^{(i)})^2]\\[.5em]
    & = n^{-1} \C[\bar{Z}_m^2, \bar{Z}_{m'}^2],
\end{align}
i.e.\ $\boldsymbol{\Sigma} = n^{-1} \C[\bar{\vZ}^{\odot 2}]$.
Regarding the vector of covariances $\mathbf{c}$, 
\begin{align}
  [\vec{c}]_m 
  &= \C[ \hat{V}[\bar{Y}],\hat{E}[\bar{Z}_m^2]]
  = \E[ \hat{V}[\bar{Y}] \hat{E}[\bar{Z}_m^2]] - \V[Y]\V[Z_m]\\[.5em]
  &= \dfrac{n}{n-1} (\E[ \hat{E}[\bar{Y}^2] \hat{E}[\bar{Z}_m^2]] - \E[ \hat{E}[\bar{Y}]^2 \hat{E}[\bar{Z}_m^2]] ) - \V[Y]\V[Z_m]\\[.5em]
  & = \dfrac{n}{n-1} ( a_n(\bar{Y}, \bar{Z}_m) - c_n(\bar{Z}_m, \bar{Y}) ) - \V[Y]\V[Z_m] 
  = a_n(\bar{Y}, \bar{Z}_m) - \V[Y]\V[Z_m] \\[.5em]
  &= n^{-1} \C[\bar{Y}^2, \bar{Z}_m^2],
\end{align}
i.e.\ $\vec{c} = n^{-1} \C[\bar{Y}^2, \bar{\vZ}^{\odot 2}]$, so that 
\begin{align}
    \boldsymbol{\alpha}^* &= \C[\bar{\vZ}^{\odot 2}]^{-1} \C[\bar{Y}^2, \bar{\vZ}^{\odot 2}],\\[1em]
    R^2 & = 
    \dfrac{
        \C[\bar{Y}^2, \bar{\vZ}^{\odot 2}]^\intercal \C[\bar{\vZ}^{\odot 2}]^{-1} \C[\bar{Y}^2, \bar{\vZ}^{\odot 2}]
    }
    {\V[\bar{Y}^2] + \dfrac{2}{n-1} \V[Y]^2}
    \xrightarrow[n \to \infty]{} R^2_{\textnormal{lim}},
\end{align}
with the same definitions as in \cref{eq:correl_coeff_variance}.
\end{remark}

\section{Variance reduction of bi-fidelity {(A)CV} estimators of the expectation}%
\label{app:cv_vs_acv}%

In this appendix, we illustrate the superiority of the (exact) CV estimator of the expectation over the bi-fidelity ACV estimator introduced in~\cite[section~3]{Ng2014_MultifidelityApproachesOptimization} (see also \cite[section~{IV}]{Geraci2017_MultifidelityMultilevelMonte}), hereafter referred to as the MFMC estimator, in terms of variance reduction.
We consider here a high-fidelity simulator $f$ and a low-fidelity version $g$.
We denote by $\rho$ the Pearson correlation coefficient between $f(\vX)$ and $g(\vX)$, and we define $w := c_g / c_f$ as the ratio of the expected cost $c_g$ of evaluating $g(\vX)$ to the expected cost $c_f$ of evaluating $f(\vX)$.
Note that our definition of $w$ is the inverse of the definition of $w$ in \cite{Ng2014_MultifidelityApproachesOptimization, Geraci2017_MultifidelityMultilevelMonte}, allowing our $w$ to remain finite when the expected cost of evaluating $g(\vX)$ vanishes.
We shall assume hereafter that $w, \rho^2 \in (0,1)$.

\subsection{The MFMC estimator}
For a given (expected) computational budget $\mathcal{C}$, the optimal MFMC estimator~\cite{Ng2014_MultifidelityApproachesOptimization, Geraci2017_MultifidelityMultilevelMonte} reads
\begin{equation}\label{eq:app_MFMC}
    \hat{\mu}_{\mathcal{C}}^{\textnormal{MFMC}} 
    =
    \frac{1}{N} \sum_{i=1}^N f(\vX^{(i)}) 
    - 
    \alpha
    \left(
        \frac{1}{N} \sum_{i=1}^N g(\vX^{(i)}) 
        -
        \frac{1}{N'} \sum_{i=1}^{N'} g(\vX^{(i)}) 
    \right),
\end{equation}
where
\begin{equation}\label{eq:app_MFMC_optim_param}
    \alpha = \frac{\C[Y,Z]}{\V[Z]},
    \quad
    N = \frac{\mathcal{C}}{c_f (1+\eta w)},
    \quad
    \eta = \sqrt{\frac{\rho^2}{w(1-\rho^2)}},
    \quad
    N' = \eta N,
\end{equation}
and where $\mathcal{X} := \{\vX^{(1)}, \ldots, \vX^{(N')}\}$ is an ${N'}$-sample of $\vX$ (we assume here that ${N'}>N$; see \cite{Ng2014_MultifidelityApproachesOptimization} and \cref{coro:mfmc_eta2} for a discussion on this assumption).
Note that $N$ and $N'$ need to be appropriately rounded to integers for the estimator \cref{eq:app_MFMC} to make sense.
For the same (expected) budget $\mathcal{C}$, one can afford $p := \mathcal{C}/c_f$ evaluations of $f(\vX)$, i.e., the expected cost of $p$ evaluations of $f(\vX)$ is $\mathcal{C}$.
Let $\hat{\mu}_p := p^{-1}\sum_{i=1}^p f(\vX^{(i)})$ denote the standard MC estimator of the expected value of $f(\vX)$ using a $p$-sample of $\vX$.
We now proceed to proving some useful properties of the MFMC estimator \cref{eq:app_MFMC}.

\begin{proposition}\label{prop:mfmc_beta}
    The variance of the MFMC estimator defined by \cref{eq:app_MFMC,eq:app_MFMC_optim_param} is such that
    $\V[\hat{\mu}_{\mathcal{C}}^{\textnormal{MFMC}}] = \beta^{\textnormal{MFMC}} \V[\hat{\mu}_p]$,
    with
    $\beta^{\textnormal{MFMC}} = (1+\eta w)^2 (1-\rho^2)$.
\end{proposition}
\begin{proof}
    From \cite[Eq.~{(6)}]{Ng2014_MultifidelityApproachesOptimization} (see also \cite[Eq.~{(9)}]{Geraci2017_MultifidelityMultilevelMonte}), we have 
    \begin{equation}
        \beta^{\textnormal{MFMC}} 
        =
        (1+\eta w) \left[ 1 - \left( 1 - \frac{1}{\eta} \right) \rho^2 \right].
    \end{equation}
    Furthermore,
    \begin{equation}
        1 - \left( 1 - \frac{1}{\eta} \right) \rho^2
        = 
        1 - \rho^2  + \frac{\rho^2}{\eta}
        =
        (1 - \rho^2) \left( 1 + \frac{1}{\eta} \frac{\rho^2}{1-\rho^2} \right).
    \end{equation}
    Finally, injecting the expression of $\eta$ defined in \cref{eq:app_MFMC_optim_param}, we notice that
    \begin{equation}
        \frac{1}{\eta} \frac{\rho^2}{1-\rho^2}
        =
        \sqrt{\frac{w(1-\rho^2)}{\rho^2}} \frac{\rho^2}{1-\rho^2}
        =
        \sqrt{\frac{w\rho^2}{1-\rho^2}}
        =
        \eta w,
    \end{equation}
    which concludes the proof.
\end{proof}
We note that $\beta^{\textnormal{MFMC}}$ does not depend on $\mathcal{C}$, and that the smaller $\beta^{\textnormal{MFMC}}$, the greater the variance reduction with respect to the standard, high-fidelity MC estimator for the same (expected) budget.
We also note that variance reduction is achieved if and only if $\beta^{\textnormal{MFMC}} < 1$.

\begin{proposition}\label{prop:mfmc_eta}
$\eta > 1$ (i.e., $N'>N$) if and only if $\rho^2 > w / (1+w)$.
\end{proposition}
\begin{proof}
    $\eta >1 \iff \eta^2 > 1 \iff \rho^2 > w(1-\rho^2) \iff (1+w)\rho^2 > w$, hence the result follows. 
\end{proof}

\begin{proposition}\label{prop:mfmc_var_red}
    $\beta^{\textnormal{MFMC}} < 1$ if and only if $\rho^2 > 4w/(1+w)^2$.
\end{proposition}
\begin{proof}
    We start by noticing that $1/(1-\rho^2) = 1+ \rho^2/(1-\rho^2) = 1+ \eta^2 w$.
    Then, it follows that 
    $\beta^{\textnormal{MFMC}} < 1 \iff (1+\eta w)^2 < 1/(1-\rho^2) = 1 + \eta^2 w \iff [2 - (1-w)\eta]\eta w < 0$.
    Because $w, \eta > 0$ and $(1-w) \in (0,1)$, we deduce that 
    $\beta^{\textnormal{MFMC}} < 1 \iff \eta > 2 / (1-w) \iff \eta^2 > 4 / (1-w)^2$.
    Injecting the expression of $\eta$ defined in \cref{eq:app_MFMC_optim_param}, we get 
    $\beta^{\textnormal{MFMC}} < 1 \iff \rho^2 (1-w)^2 > 4w (1-\rho^2)$, eventually leading to the desired result.
\end{proof}

\begin{proposition}\label{prop:mfmc_feasible}
    For any $w \in (0,1)$, $4w/(1+w)^2 < 1$.
\end{proposition}
\begin{proof}
    The function $a \colon x \mapsto 4x/(1+x)^2$ is continuous on $[0,1]$, and its derivative is defined on $[0,1]$ by $a'(x) = 4(1-x)/(1+x)^3$, which is positive for $x \in (0,1)$, so $a$ is strictly increasing on $(0,1)$.
    Furthermore, for $x \in [0,1]$, $a(x) = 1 \iff x = 1$, implying that, for any $x \in (0,1)$, $a(x) < 1$, which concludes the proof. 
\end{proof}
\cref{prop:mfmc_feasible} implies that, for any $w \in (0,1)$, variance reduction can always be achieved provided that $f(\vX)$ and $g(\vX)$ are sufficiently correlated.

\begin{proposition}\label{prop:comp_rho2_thresh}
    For any $w \in (0,1)$, $4w/(1+w)^2 >  2w / (1+w)$.
\end{proposition}
\begin{proof}
    Taking the ratio of the left-hand side to the right-hand side, we have
    \begin{equation}
        \dfrac{4w/(1+w)^2}{2w / (1+w)} = \dfrac{2}{1+w} > 1,
    \end{equation}
    since $w < 1$.
\end{proof}

\begin{corollary}\label{coro:mfmc_eta2}
    If $\beta^{\textnormal{MFMC}} < 1$, then $N' > N$.
\end{corollary}
\begin{proof}
    From \cref{prop:mfmc_var_red}, if $\beta^{\textnormal{MFMC}} < 1$, then $\rho^2 > 4w/(1+w)^2$. 
    Then, from \cref{prop:comp_rho2_thresh}, we deduce that $\rho^2 > 2w / (1+w) > w / (1+w)$. 
    Finally, we conclude from \cref{prop:mfmc_eta} that $\eta > 1$, i.e., $N'>N$.
\end{proof}

\subsection{The exact CV estimator}

We now turn to the exact CV estimator of the expected value of $f(\vX)$, i.e., considering that $\tau := \E[g(\vX)]$ is known. For a given (expected) computational budget $\mathcal{C}$, the (optimal) CV estimator reads
\begin{equation}\label{eq:app_cv_expect}
    \hat{\mu}_{\mathcal{C}}^{\textnormal{CV}}
    =
    \frac{1}{n} \sum_{i=1}^n f(\vX^{(i)}) 
    - 
    \alpha
    \left( \frac{1}{n} \sum_{i=1}^n g(\vX^{(i)}) - \tau \right),
    \quad
    \mbox{ with }
    \alpha = \frac{\C[Y,Z]}{\V[Z]},
    \quad
    n = \frac{\mathcal{C}}{c_f (1+ w)},
\end{equation}
and where $\mathcal{X} := \{\vX^{(1)}, \ldots, \vX^{(n)}\}$ is an $n$-sample of $\vX$.
We have the following properties.

\begin{proposition}\label{prop:cv_beta}
    The variance of the CV estimator \cref{eq:app_cv_expect} is such that
    ${\V[\hat{\mu}_{\mathcal{C}}^{\textnormal{CV}}] = \beta^{\textnormal{CV}} \V[\hat{\mu}_p]}$,
    with
    $\beta^{\textnormal{CV}} = (1+w)(1-\rho^2)$.
\end{proposition}
\begin{proof}
    From \cref{varMCV,app:CVmeanvar-mean}, we know that $\V[\hat{\mu}_{\mathcal{C}}^{\textnormal{CV}}] = (1-\rho^2) \V[\hat{\mu}_n]$.
    The result follows from the fact that $\V[\hat{\mu}_n] = \V[f(\vX)]/n$ and $\V[\hat{\mu}_p] = \V[f(\vX)]/p$, with $n=p/(1+w)$. 
\end{proof}

\begin{proposition}\label{prop:cv_var_red}
    $\beta^{\textnormal{CV}} < 1$ if and only if $\rho^2 > w / (1+w)$.
\end{proposition}
\begin{proof}
    $\beta^{\textnormal{CV}} <1 \iff (1+w)(\rho^2-1) > - 1 \iff \rho^2 > 1 - 1 / (1+w) = w/(1+w)$.
\end{proof}
Combined with \cref{prop:mfmc_var_red,prop:cv_var_red}, \cref{prop:comp_rho2_thresh} indicates that the minimum value of $\rho^2$ that guarantees variance reduction is always (i.e., for any $w \in (0,1)$) at least twice as large for the MFMC estimator than for the CV estimator.
For instance, for $w=1/3$, the MFMC estimator requires $\rho^2 > 3/4$ to have a reduced variance, while the CV estimator only requires $\rho^2>1/4$.

\begin{proposition}\label{prop:comp_beta_cv_mfmc}
    For $w,\rho^2 \in (0,1)$, $\beta^{\textnormal{CV}} < 1$ if and only if $(1+w) \beta^{\textnormal{CV}} < \beta^{\textnormal{MFMC}}$.
\end{proposition}
\begin{proof}
    $\beta^{\textnormal{CV}} < 1 \iff \rho^2 > w / (1+w) \iff 1-\rho^2 < 1/(1+w) \iff 1/(1-\rho^2) > 1 + w$.
    Because $1/(1-\rho^2) -1 = \rho^2 / (1-\rho^2)$,
    $\beta^{\textnormal{CV}} < 1 \iff \rho^2 / (1-\rho^2) > w \iff w\rho^2 / (1-\rho^2) > w^2$.
    We note that $w\rho^2 / (1-\rho^2) = (\eta w)^2$,
    so that
    $\beta^{\textnormal{CV}} < 1 \iff \eta w > w$.
    Therefore, because 
    $\beta^{\textnormal{MFMC}} / \beta^{\textnormal{CV}} = (1+\eta w)^2 / (1+w)$,
    $\beta^{\textnormal{CV}} < 1 \iff \beta^{\textnormal{MFMC}} / \beta^{\textnormal{CV}} > (1+w)^2/(1+w) = 1+w$, which concludes the proof.
\end{proof}

\begin{corollary}\label{coro:comp_beta_cv_mfmc}
    For $w,\rho^2 \in (0,1)$, if $\beta^{\textnormal{CV}} < 1$, then $\beta^{\textnormal{CV}} < \beta^{\textnormal{MFMC}}$.
\end{corollary}
\begin{proof}
    Because $w>0$, 
    $\beta^{\textnormal{MFMC}} / \beta^{\textnormal{CV}} > 1+w \implies \beta^{\textnormal{MFMC}} / \beta^{\textnormal{CV}} > w$.
    Hence the result follows from \cref{prop:comp_beta_cv_mfmc}.
\end{proof}
\Cref{coro:comp_beta_cv_mfmc} implies that, in the $(w, \rho^2)$ variance reduction region of the CV estimator, which is larger than that of the MFMC estimator by \cref{prop:mfmc_var_red,prop:cv_var_red,prop:comp_rho2_thresh}, the variance reduction of the CV estimator is always greater than that of the MFMC estimator (i.e., $\beta^{\textnormal{CV}} < \beta^{\textnormal{MFMC}}$).

\begin{figure}[ht]\centering%
\begin{subfigure}[c]{.48\linewidth}\centering%
  \includegraphics[width=\linewidth]{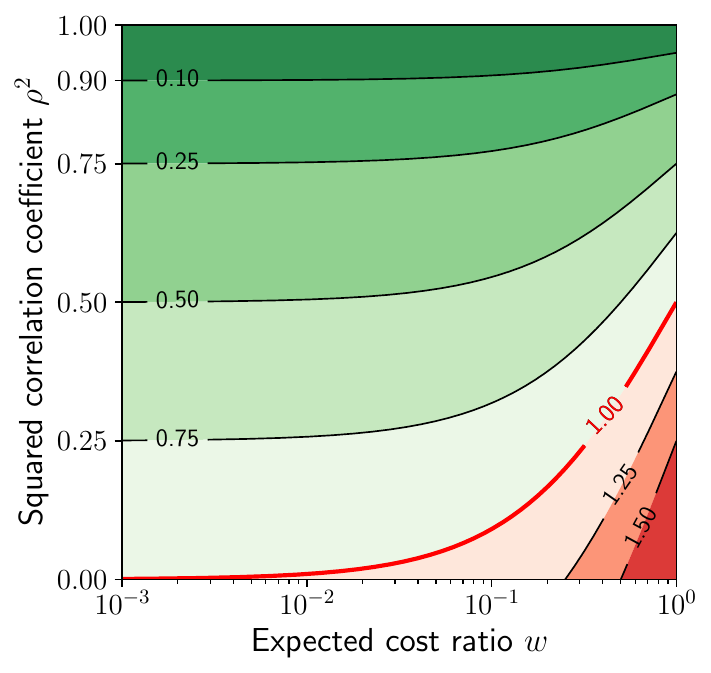}%
  \caption{Standard (exact) CV.}%
  \label{fig:beta_cv}
\end{subfigure}\hfill%
\begin{subfigure}[c]{.48\linewidth}\centering%
  \includegraphics[width=\linewidth]{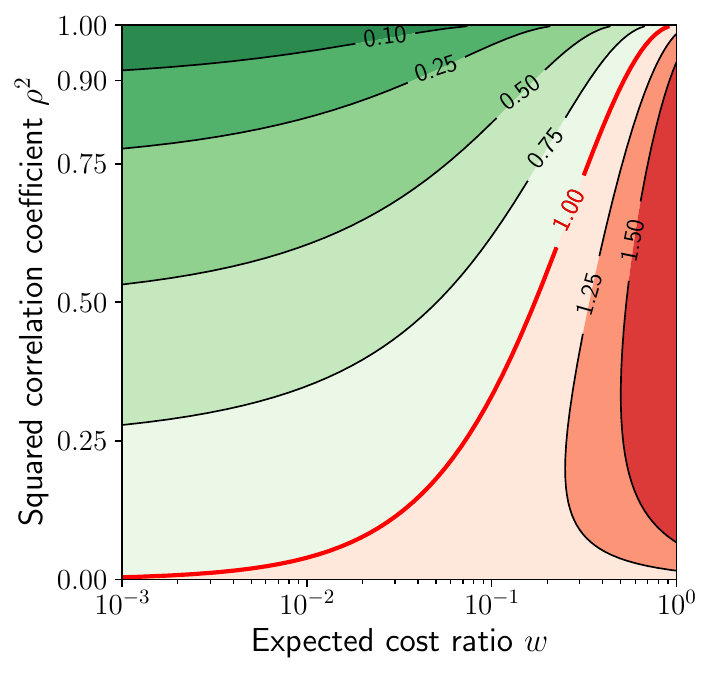}%
  \caption{MFMC.}%
  \label{fig:beta_mfmc}
\end{subfigure}
\caption{Contour lines of $\beta^{\textnormal{CV}}$ (\cref{fig:beta_cv}) and $\beta^{\textnormal{MFMC}}$ (\cref{fig:beta_mfmc}) as functions of $w$ and $\rho^2$.
In each plot, the green region corresponds to pairs $(w, \rho^2)$ for which the variance is reduced, while the red region corresponds to pairs for which the variance is increased.
The boundary between these two regions is represented by a red line.}
\label{fig:beta}
\end{figure}

\Cref{fig:beta} depicts the $(w, \rho^2)$ variance reduction regions for the CV (\cref{fig:beta_cv}) and MFMC (\cref{fig:beta_mfmc}) estimators.
We clearly see that for any $w\in(0,1)$, a higher value of $\rho^2$ is required for the MFMC estimator to achieve variance reduction than for the CV estimator (red solid line).
\Cref{fig:beta} also illustrates an immediate consequence of \cref{prop:cv_beta,prop:mfmc_beta}, namely that $\lim_{w\to 0} \beta^{\textnormal{CV}} = \lim_{w\to 0} \beta^{\textnormal{MFMC}} = 1-\rho^2$.
We also observe that $\beta^{\textnormal{CV}}$ converges to $1-\rho^2$ faster than $\beta^{\textnormal{MFMC}}$, consistent with \cref{coro:comp_beta_cv_mfmc}.

These properties illustrate the advantages of using surrogate-based control variates.
Indeed, when the low-fidelity model $g$ corresponds to a surrogate model, not only $w \ll 1$, but the exact expectation $\E[g(\vX)]$ may be known, so that the exact CV approach can be used, resulting in a variance reduction factor close to $\rho^2$.%

\section{Optimal MLMC-MLCV variance estimator}
\label{app:MLCVmeanvar}

The MLMC-MLCV variance estimator is given by \cref{estimatorMLCV}, with
\begin{align}
    \hat{T}_0^{\textnormal{MLCV}}(\boldsymbol{\alpha}_0)
    & = 
    \hat{V}^{(0)}[Y_0] - \boldsymbol{\alpha}_0^\intercal 
    (\hat{V}^{(0)}[\vZ] - \boldsymbol{\sigma}^2_{\vZ}), \\[.5em]
    \hat{T}_\ell^{\textnormal{MLCV}}(\boldsymbol{\alpha}_\ell)
    & = 
    \hat{V}^{(\ell)}[Y_\ell] - \hat{V}^{(\ell)}[Y_{\ell-1}] 
    - \boldsymbol{\alpha}_\ell^\intercal 
    [
        \hat{V}^{(\ell)}[\vZ_{1:}] - \hat{V}^{(\ell)}[\tilde{\vZ}] 
        - 
        (\boldsymbol{\sigma}^2_{\vZ_{1:}} - \boldsymbol{\sigma}^2_{\tilde{\vZ}})
    ],
    \mbox{ for } \ell>1,
\end{align}
where 
\begin{align}
  \vZ &= (Z_0, \ldots, Z_L) = (g_0(\vX), \ldots, g_L(\vX)), & \boldsymbol{\sigma}^2_\vZ &= \V[\vZ],\\[.5em]
  \vZ_{1:} &= (Z_1, \ldots, Z_L) = (g_1(\vX), \ldots, g_L(\vX)), & \boldsymbol{\sigma}^2_{\vZ_{1:}} &= \V[\vZ_{1:}],\\[.5em]
  \tilde{\vZ} &= (\tilde{Z}_0, \ldots, \tilde{Z}_{L-1}) = (\tilde{g}_0(\vX), \ldots, \tilde{g}_{L-1}(\vX)), & \boldsymbol{\sigma}^2_{\tilde{\vZ}} &= \V[\tilde{\vZ}],\\[.5em]
  \tilde{g}_{\ell-1} &= g_\ell - h_\ell,
  \quad \mbox{for } \ell > 0.
\end{align}
Furthermore, we assume that the expected values of the control variates are known, so that we use the following variance estimators:
\begin{align}
    \hat{V}^{(0)}[\vZ] 
    &= \hat{E}^{(0)}[\bar{\vZ}^2],
    & \hat{V}^{(\ell)}[\vZ_{1:}] - \hat{V}^{(\ell)}[\tilde{\vZ}]
    & = \hat{E}^{(\ell)}[\bar{\vZ}_{1:}^{\odot 2}] - \hat{E}^{(\ell)}[\bar{\tilde{\vZ}}^{\odot 2}]
    =  \hat{E}^{(\ell)}[\bar{\vZ}_{1:}^{\odot 2} - \bar{\tilde{\vZ}}^{\odot 2}].
\end{align}
The optimal values $\boldsymbol{\alpha}_\ell^*$ of the CV parameters are given by
\begin{align}
    \boldsymbol{\alpha}_0^* & = \boldsymbol{\Sigma}_0^{-1} \vec{c}_0,
    & \boldsymbol{\Sigma}_0 & = \C[\bar{\vZ}^{\odot 2}],
    & \vec{c}_0 & = \C[\bar{\vZ}^{\odot 2}, \bar{Y}_0^2], \\
    \boldsymbol{\alpha}_\ell^* & = \boldsymbol{\Sigma}_\ell^{-1} \vec{c}_\ell,
    & \boldsymbol{\Sigma}_\ell & = \C[\bar{\vZ}_{1:}^{\odot 2} - \bar{\tilde{\vZ}}^{\odot 2}],
    & \vec{c}_\ell & = \C[\bar{\vZ}_{1:}^{\odot 2} - \bar{\tilde{\vZ}}^{\odot 2}, \bar{Y}_\ell^2 - \bar{Y}_{\ell-1}^2],
    \quad \mbox{for } \ell >0.
\end{align}
Note that $\boldsymbol{\Sigma}_\ell$ is the same for all $\ell > 0$.
For PC-based control variates, $g_\ell(\vX) = \sum_{k=0}^{P_g^\ell} \mathfrak{g}_{\ell, k} \Psi_k(\vX)$ and $h_\ell(\vX) = \sum_{k=0}^{P_h^\ell} \mathfrak{h}_{\ell, k} \Psi_k(\vX)$, letting $P_{\tilde{g}}^\ell := \max(P_g^\ell, P_h^\ell)$, we have
\begin{equation}
    \tilde{Z}_{\ell-1} = 
    \sum_{k=0}^{P_{\tilde{g}}^\ell} 
    \tilde{\mathfrak{g}}_{\ell-1, k} \Psi_k(\vX),
    \quad \mbox{with }
    \tilde{\mathfrak{g}}_{\ell-1, k} := \mathfrak{g}_{\ell,k} - \mathfrak{h}_{\ell,k},
    \quad \mbox{for } \ell = 1,\ldots,L,
\end{equation}
where 
$\mathfrak{g}_{\ell,k} := 0$ for $k>P_g^\ell$ and $\mathfrak{h}_{\ell,k} := 0$ for $k>P_h^\ell$.
Consequently,
\begin{align}
    \E[Z_\ell] &= \mathfrak{g}_{\ell,0},
    & \E[\tilde{Z}_{\ell-1}] &= \tilde{\mathfrak{g}}_{\ell-1,0}, \\
    \V[Z_\ell] &= \sum_{k=1}^{P_g^\ell} \mathfrak{g}_{\ell,k}^2,
    & \V[\tilde{Z}_{\ell-1}] &= \sum_{k=1}^{P_{\tilde{g}}^\ell} \tilde{\mathfrak{g}}_{\ell-1,k}^2. 
\end{align}
Furthermore,
\begin{equation}
    [\boldsymbol{\Sigma}_0]_{m,m'}
    =
    \sum_{i,j=1}^{P_g^m} \sum_{q,r=1}^{P_g^{m'}}
    \mathfrak{g}_{m,i} \mathfrak{g}_{m,j} \mathfrak{g}_{m',q} \mathfrak{g}_{m',r} 
    (\Phi_{ijqr} - \delta_{ij}\delta_{qr}), 
    \quad \mbox{for } m,m' = 0,\ldots, L,
\end{equation}
where $\Phi_{ijqr} := \E[\Psi_i(\vX)\Psi_j(\vX)\Psi_q(\vX)\Psi_r(\vX)]$ are the entries of the fourth-order Galerkin product tensor, which is a well-known object in stochastic Galerkin methods~\cite{Ghanem2003_StochasticFiniteElements,LeMaitre2010_SpectralMethodsUncertainty}, and $\delta_{ij}$ denotes the Kronecker delta.
Besides, noticing that 
$\bar{\vZ}_{1:}^{\odot 2} - \bar{\tilde{\vZ}}^{\odot 2} 
= (\bar{\vZ}_{1:} - \bar{\tilde{\vZ}}) \odot (\bar{\vZ}_{1:} + \bar{\tilde{\vZ}})
= \bar{\vec{W}} \odot (\bar{\vZ}_{1:} + \bar{\tilde{\vZ}})$, with $\bar{\vec{W}} = \vec{W} - \boldsymbol{\mu}_{\vec{W}}$ and the definitions in \cref{eq:def_W}, we have
\begin{equation}
    [\boldsymbol{\Sigma}_\ell]_{m,m'}
    =
    A_{m,m'} + B_{m,m'} + C_{m,m'} + C_{m',m}, 
    \quad \mbox{for } \ell,m,m' = 1,\ldots, L,
\end{equation}
where
\begin{align}
    A_{m,m'} 
    & =
    \C[\bar{W}_m \bar{Z}_m, \bar{W}_{m'} \bar{Z}_{m'}] \\
    & = 
    \sum_{i=1}^{P_h^m} \sum_{j=1}^{P_g^m} \sum_{q=1}^{P_h^{m'}} \sum_{r=1}^{P_g^{m'}}
    \mathfrak{h}_{m,i} \mathfrak{g}_{m,j} \mathfrak{h}_{m',q} \mathfrak{g}_{m',r} 
    (\Phi_{ijqr} - \delta_{ij}\delta_{qr}), \\[1em]
    B_{m,m'} 
    & =
    \C[\bar{W}_m \bar{\tilde{Z}}_{m-1}, \bar{W}_{m'} \bar{\tilde{Z}}_{m'-1}] \\
    & = 
    \sum_{i=1}^{P_h^m} \sum_{j=1}^{P_{\tilde{g}}^m} \sum_{q=1}^{P_h^{m'}} \sum_{r=1}^{P_{\tilde{g}}^{m'}}
    \mathfrak{h}_{m,i} \tilde{\mathfrak{g}}_{m-1,j} \mathfrak{h}_{m',q} \tilde{\mathfrak{g}}_{m'-1,r} 
    (\Phi_{ijqr} - \delta_{ij}\delta_{qr}), \\[1em]
    C_{m,m'} 
    & =
    \C[\bar{W}_m \bar{\tilde{Z}}_{m-1}, \bar{W}_{m'} \bar{Z}_{m'}] \\
    & = 
    \sum_{i=1}^{P_h^m} \sum_{j=1}^{P_{\tilde{g}}^m} \sum_{q=1}^{P_h^{m'}} \sum_{r=1}^{P_g^{m'}}
    \mathfrak{h}_{m,i} \tilde{\mathfrak{g}}_{m-1,j} \mathfrak{h}_{m',q} \mathfrak{g}_{m',r} 
    (\Phi_{ijqr} - \delta_{ij}\delta_{qr}).
\end{align}


\section{Taylor surrogate for the numerical test case}\label{app:taylor}

In our example,
$f_\ell$ is only differentiable in $[-\pi,\pi]^3 \times [\nu_{\textnormal{min}}, \nu_{\textnormal{max}}] \times ([-1, 0) \cup (0, 1])^3$.
Consequently, the Taylor polynomial surrogate cannot be used directly as defined in~\eqref{eq:Taylor1} and~\eqref{eq:Taylor2}, because the Jacobian and Hessian matrices are not defined at $\boldsymbol{\mu}_\vX$.
Instead, we define the first-order Taylor surrogate as
\begin{equation}\label{eq:Taylor1_num}
    f_\ell(\vX) \simeq g_\ell^{\textnormal{T}_1}(\vX) 
    = 
    f_\ell(\boldsymbol{\mu}_\vX) 
    + 
    \sum_{i=1}^7 g_{\ell, i}^{\textnormal{T}_1}(\boldsymbol{\mu}_\vX; \vX),
\end{equation}
where, for $i=1,\ldots,4$, 
$g_{\ell, i}^{\textnormal{T}_1}(\boldsymbol{\mu}_\vX; \vX)
=
(X_i - \mu_{X_i})
\dfrac{\partial f_\ell}{\partial X_i}(\boldsymbol{\mu}_\vX)
$, and, for $i=5,\ldots,7$,
\begin{equation}
    g_{\ell, i}^{\textnormal{T}_1}(\boldsymbol{\mu}_\vX; \vX)
    =  (X_i - \mu_{X_i})
    \times 
    \begin{cases}
        \dfrac{\partial f_\ell}{\partial X_i}(\boldsymbol{\mu}_\vX),
        & \mu_{X_i} \ne 0, \\[1em]
        \lim_{\boldsymbol{\mu}_\vX' \to \boldsymbol{\mu}_\vX^{i, 0^-}}\dfrac{\partial f_\ell}{\partial X_i}(\boldsymbol{\mu}_\vX'),
        & \mu_{X_i} = 0, X_i <0,\\[1em]
        \lim_{\boldsymbol{\mu}_\vX' \to \boldsymbol{\mu}_\vX^{i, 0^+}}\dfrac{\partial f_\ell}{\partial X_i}(\boldsymbol{\mu}_\vX'),
        & \mu_{X_i} = 0, X_i >0,\\[1em]
        0 &  \mu_{X_i} = X_i = 0,
    \end{cases}
\end{equation}
where $\boldsymbol{\mu}_\vX^{i, 0^\pm} = (\mu_{X_1}, \ldots, \mu_{X_{i-1}}, 0^\pm, \mu_{X_{i+1}}, \ldots, \mu_{X_7})$, which is now well-defined.

With our choice of distributions for $\vX$ given in~\eqref{eq:distrib}, we have $\boldsymbol{\mu}_\vX = (0,0,0,0.005,0,0,0)$, and the first-order Taylor surrogate is defined by~\eqref{eq:Taylor1_num}, with
\begin{align}
    g_{\ell, 1}^{\textnormal{T}_1}(\boldsymbol{\mu}_\vX; \vX) &
    = 7 X_1 
    \sum_{k=1}^K B_k^\ell(T; \boldsymbol{\mu}_\vX)
    \sum_{j=1}^{N_\ell} w_j \sin(k\pi x_j) \mathcal{F}_2(x_j), \\
    g_{\ell, 2}^{\textnormal{T}_1}(\boldsymbol{\mu}_\vX; \vX) & = 0,\\
    g_{\ell, 3}^{\textnormal{T}_1}(\boldsymbol{\mu}_\vX; \vX) & = 0,\\
    g_{\ell, 4}^{\textnormal{T}_1}(\boldsymbol{\mu}_\vX; \vX) &
    = - (X_4 - 0.005) \pi^2 T 
    \sum_{k=1}^K k^2 \exp(-0.005 k^2\pi^2 T) A_k^\ell(\boldsymbol{\mu}_\vX)
    \sum_{j=1}^{N_\ell} w_j \sin(k\pi x_j),\\
    g_{\ell, 5}^{\textnormal{T}_1}(\boldsymbol{\mu}_\vX; \vX) &
    = 400 |X_5| 
    \sum_{k=1}^K B_k^\ell(T; \boldsymbol{\mu}_\vX)
    \sum_{j=1}^{N_\ell} w_j \sin(k\pi x_j) \mathcal{F}_1(x_j),\\
    g_{\ell, 6}^{\textnormal{T}_1}(\boldsymbol{\mu}_\vX; \vX) &
    = 400 |X_6| 
    \sum_{k=1}^K B_k^\ell(T; \boldsymbol{\mu}_\vX)
    \sum_{j=1}^{N_\ell} w_j \sin(k\pi x_j) \mathcal{F}_1(x_j),\\
    g_{\ell, 7}^{\textnormal{T}_1}(\boldsymbol{\mu}_\vX; \vX) &
    = 400 |X_7| 
    \sum_{k=1}^K B_k^\ell(T; \boldsymbol{\mu}_\vX)
    \sum_{j=1}^{N_\ell} w_j \sin(k\pi x_j) \mathcal{F}_1(x_j).
\end{align}
Because of the piecewise definition of $g_\ell^{\textnormal{T}_1}$, 
the identity $\E[g_\ell^{\textnormal{T}_1}(\vX)] = f_\ell(\boldsymbol{\mu}_{\vX})$ for a regular first-order Taylor surrogate no longer holds. Instead, we have
\begin{equation}
    \E[g_\ell^{\textnormal{T}_1}(\vX)] 
    =
    f_\ell(\boldsymbol{\mu}_{\vX})
    +
    600 \sum_{k=1}^K B_k^\ell(T; \boldsymbol{\mu}_\vX)
    \sum_{j=1}^{N_\ell} w_j \sin(k\pi x_j) \mathcal{F}_1(x_j).
\end{equation}

\bibliographystyle{plain}
\bibliography{references}

\end{document}